\newtheorem{theorem}{Theorem}
\newtheorem*{proposition}{Proposition}
\newtheorem{lemma}{Lemma}
\theoremstyle{definition}
\theoremstyle{remark}
\begin{document}

\title[]{Ulam sequences and Ulam Sets}
\keywords{Ulam sequence, discrete dynamics, additive combinatorics.}
\subjclass[2010]{06A99 and 11B05 (primary), 11B83 (secondary)}

\author[]{Noah Kravitz}
\address{Grace Hopper College, Yale University, New Haven, CT 06511, USA}
\email{noah.kravitz@yale.edu}

\author[]{Stefan Steinerberger}
\address{Department of Mathematics, Yale University, New Haven, CT 06511, USA}
\email{stefan.steinerberger@yale.edu}

\begin{abstract} The Ulam sequence is given by $a_1 =1, a_2 = 2$, and then, for $n \geq 3$, the element $a_n$ is defined as the smallest integer that can be written as the sum of
two distinct earlier elements in a unique way. This gives the sequence $1, 2, 3, 4, 6, 8, 11, 13, 16,  \dots$, which has a mysterious quasi-periodic behavior that is not understood. Ulam's definition naturally extends
 to higher dimensions: for a set of initial vectors $\left\{v_1, \dots, v_k\right\} \subset \mathbb{R}^n$,
we define a sequence by repeatedly adding the smallest elements that can be uniquely written as the sum of two distinct
vectors already in the set. The resulting sets have very rich structure that turns out to be universal for many commuting binary operations.  We give examples of different types of behavior, prove several universality results, and describe new unexplained phenomena.

\end{abstract}

\maketitle
%
%

\vspace{-10pt}

\section{Introduction}
\subsection{Background.}
Stanis\l{}aw Ulam introduced the sequence
 $$1, 2, 3, 4, 6, 8, 11, 13, 16, 18, 26, 28, 36, 38, 47, 48, 53, 57, 62, 69, 72, 77, 82, 87, 97 \dots $$
in a 1964 survey \cite{ulam} on unsolved problems.  The sequence is given by $a_1 =1, a_2 = 2$, after which we iteratively choose the next element to be the smallest integer that can be written as the sum of two distinct earlier elements in a unique way. Ulam asks in \cite{ulam2} whether it is possible to determine the asymptotic density of the sequence (which, empirically, seems to be somewhere around 0.079). At
first glance, this sequence seems somewhat arbitrary and contrived.
\begin{center}
\begin{figure}[h!]
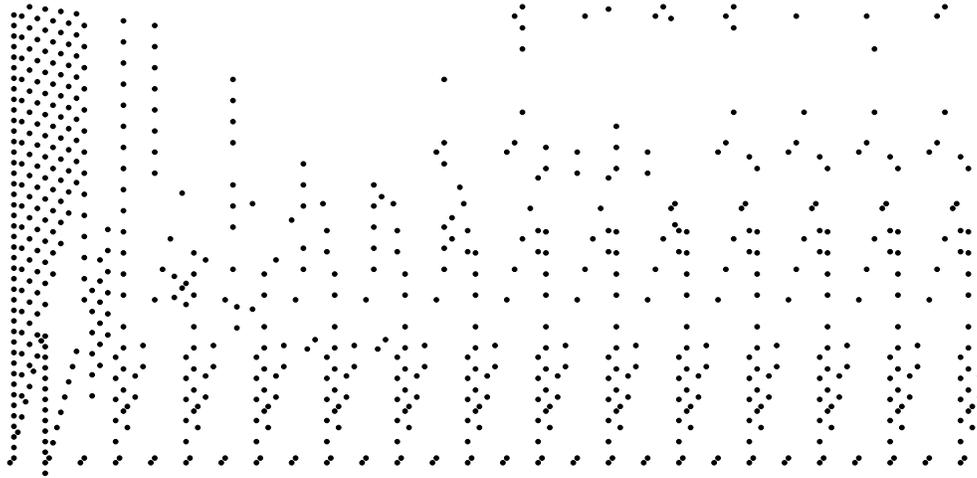


\captionsetup{width = 0.95\textwidth}
\caption{The Ulam set arising from $\left\{(9,0), (0,9), (1,13)\right\}$ shows both chaotic and regular behavior. In \S4 we will prove that it is periodic in both the $x-$ and $y-$directions.}
\label{fig:twodlattice}
\end{figure}
\end{center}
\vspace{-15pt}
 Ulam himself is not very clear about his motivation, and the original text reads only:
\begin{quote}
Another one-dimensional sequence was studied recently. This one was defined purely additively: one starts, say, with integers 1,2 and considers in turn all integers
which are obtainable as the sum of two different ones previously defined but only if they are so expressible in a unique way. The sequence would start
as follows:
$$ 1,2,3,4,6,8,11,13, \dots;$$
even sequences this simple present problems. (Ulam, 1964)
\end{quote}

It is trivial to see that $a_n \leq F_{n+1}$, where $F_n$ is the $n-$th Fibonacci number. This is, in fact, the only rigorously proven statement about the Ulam sequence that we are aware of.
Different initial values $a_1, a_2$ can give rise to more structured sequences \cite{finch1, finch2, q}: for some, the sequence of consecutive differences $a_{n+1} - a_{n}$ is eventually periodic. Ulam's original
sequence beginning with $1,2$ does not seem to become periodic: Knuth \cite{oeis} remarks that $a_{4953}-a_{4952}=262$ and $a_{18858} - a_{18857} = 315$. The understanding in the literature is that the sequence `does not appear to follow any recognizable pattern' \cite{finch3} and is `quite erratic' \cite{schm}. Other initial values sometimes give rise to more regular sequences, a phenomenon investigated by Cassaigne \& Finch \cite{cas}, Finch \cite{finch1, finch3, finch2}, Queneau \cite{q} and, proving a conjecture of Finch, Schmerl \& Spiegel \cite{schm}.
\begin{center}
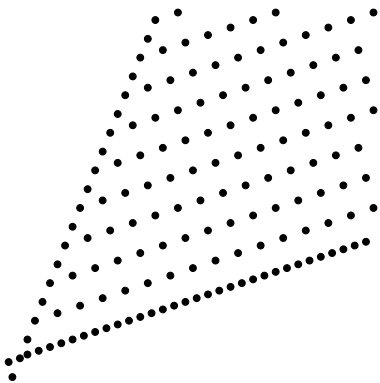
\begin{figure}[h!]
\begin{tikzpicture}[scale=0.05]
\filldraw (3,1) circle (0.9cm);
\filldraw (2,5) circle (0.9cm);
\filldraw (5,6) circle (0.9cm);
\filldraw (7,7) circle (0.9cm);
\filldraw (7,11) circle (0.9cm);
\filldraw (10,8) circle (0.9cm);
\filldraw (9,16) circle (0.9cm);
\filldraw (13,9) circle (0.9cm);
\filldraw (11,21) circle (0.9cm);
\filldraw (16,10) circle (0.9cm);
\filldraw (13,26) circle (0.9cm);
\filldraw (19,11) circle (0.9cm);
\filldraw (15,31) circle (0.9cm);
\filldraw (22,12) circle (0.9cm);
\filldraw (17,36) circle (0.9cm);
\filldraw (25,13) circle (0.9cm);
\filldraw (19,41) circle (0.9cm);
\filldraw (28,14) circle (0.9cm);
\filldraw (21,46) circle (0.9cm);
\filldraw (31,15) circle (0.9cm);
\filldraw (23,51) circle (0.9cm);
\filldraw (34,16) circle (0.9cm);
\filldraw (25,56) circle (0.9cm);
\filldraw (37,17) circle (0.9cm);
\filldraw (27,61) circle (0.9cm);
\filldraw (40,18) circle (0.9cm);
\filldraw (29,66) circle (0.9cm);
\filldraw (43,19) circle (0.9cm);
\filldraw (31,71) circle (0.9cm);
\filldraw (46,20) circle (0.9cm);
\filldraw (33,76) circle (0.9cm);
\filldraw (49,21) circle (0.9cm);
\filldraw (35,81) circle (0.9cm);
\filldraw (52,22) circle (0.9cm);
\filldraw (37,86) circle (0.9cm);
\filldraw (55,23) circle (0.9cm);
\filldraw (39,91) circle (0.9cm);
\filldraw (58,24) circle (0.9cm);
\filldraw (41,96) circle (0.9cm);
\filldraw (61,25) circle (0.9cm);
\filldraw (64,26) circle (0.9cm);
\filldraw (67,27) circle (0.9cm);
\filldraw (70,28) circle (0.9cm);
\filldraw (73,29) circle (0.9cm);
\filldraw (76,30) circle (0.9cm);
\filldraw (79,31) circle (0.9cm);
\filldraw (82,32) circle (0.9cm);
\filldraw (85,33) circle (0.9cm);
\filldraw (88,34) circle (0.9cm);
\filldraw (91,35) circle (0.9cm);
\filldraw (94,36) circle (0.9cm);
\filldraw (97,37) circle (0.9cm);
\filldraw (15,18) circle (0.9cm);
\filldraw (19,28) circle (0.9cm);
\filldraw (23,38) circle (0.9cm);
\filldraw (27,48) circle (0.9cm);
\filldraw (31,58) circle (0.9cm);
\filldraw (35,68) circle (0.9cm);
\filldraw (39,78) circle (0.9cm);
\filldraw (43,88) circle (0.9cm);
\filldraw (47,98) circle (0.9cm);
\filldraw (21,20) circle (0.9cm);
\filldraw (25,30) circle (0.9cm);
\filldraw (29,40) circle (0.9cm);
\filldraw (33,50) circle (0.9cm);
\filldraw (37,60) circle (0.9cm);
\filldraw (41,70) circle (0.9cm);
\filldraw (45,80) circle (0.9cm);
\filldraw (49,90) circle (0.9cm);
\filldraw (27,22) circle (0.9cm);
\filldraw (31,32) circle (0.9cm);
\filldraw (35,42) circle (0.9cm);
\filldraw (39,52) circle (0.9cm);
\filldraw (43,62) circle (0.9cm);
\filldraw (47,72) circle (0.9cm);
\filldraw (51,82) circle (0.9cm);
\filldraw (55,92) circle (0.9cm);
\filldraw (33,24) circle (0.9cm);
\filldraw (37,34) circle (0.9cm);
\filldraw (41,44) circle (0.9cm);
\filldraw (45,54) circle (0.9cm);
\filldraw (49,64) circle (0.9cm);
\filldraw (53,74) circle (0.9cm);
\filldraw (57,84) circle (0.9cm);
\filldraw (61,94) circle (0.9cm);
\filldraw (39,26) circle (0.9cm);
\filldraw (43,36) circle (0.9cm);
\filldraw (47,46) circle (0.9cm);
\filldraw (51,56) circle (0.9cm);
\filldraw (55,66) circle (0.9cm);
\filldraw (59,76) circle (0.9cm);
\filldraw (63,86) circle (0.9cm);
\filldraw (67,96) circle (0.9cm);
\filldraw (45,28) circle (0.9cm);
\filldraw (49,38) circle (0.9cm);
\filldraw (53,48) circle (0.9cm);
\filldraw (57,58) circle (0.9cm);
\filldraw (61,68) circle (0.9cm);
\filldraw (65,78) circle (0.9cm);
\filldraw (69,88) circle (0.9cm);
\filldraw (73,98) circle (0.9cm);
\filldraw (51,30) circle (0.9cm);
\filldraw (55,40) circle (0.9cm);
\filldraw (59,50) circle (0.9cm);
\filldraw (63,60) circle (0.9cm);
\filldraw (67,70) circle (0.9cm);
\filldraw (71,80) circle (0.9cm);
\filldraw (75,90) circle (0.9cm);
\filldraw (57,32) circle (0.9cm);
\filldraw (61,42) circle (0.9cm);
\filldraw (65,52) circle (0.9cm);
\filldraw (69,62) circle (0.9cm);
\filldraw (73,72) circle (0.9cm);
\filldraw (77,82) circle (0.9cm);
\filldraw (81,92) circle (0.9cm);
\filldraw (63,34) circle (0.9cm);
\filldraw (67,44) circle (0.9cm);
\filldraw (71,54) circle (0.9cm);
\filldraw (75,64) circle (0.9cm);
\filldraw (79,74) circle (0.9cm);
\filldraw (83,84) circle (0.9cm);
\filldraw (87,94) circle (0.9cm);
\filldraw (69,36) circle (0.9cm);
\filldraw (73,46) circle (0.9cm);
\filldraw (77,56) circle (0.9cm);
\filldraw (81,66) circle (0.9cm);
\filldraw (85,76) circle (0.9cm);
\filldraw (89,86) circle (0.9cm);
\filldraw (93,96) circle (0.9cm);
\filldraw (75,38) circle (0.9cm);
\filldraw (79,48) circle (0.9cm);
\filldraw (83,58) circle (0.9cm);
\filldraw (87,68) circle (0.9cm);
\filldraw (91,78) circle (0.9cm);
\filldraw (95,88) circle (0.9cm);
\filldraw (99,98) circle (0.9cm);
\filldraw (81,40) circle (0.9cm);
\filldraw (85,50) circle (0.9cm);
\filldraw (89,60) circle (0.9cm);
\filldraw (93,70) circle (0.9cm);
\filldraw (97,80) circle (0.9cm);
\filldraw (87,42) circle (0.9cm);
\filldraw (91,52) circle (0.9cm);
\filldraw (95,62) circle (0.9cm);
\filldraw (99,72) circle (0.9cm);
\filldraw (93,44) circle (0.9cm);
\filldraw (97,54) circle (0.9cm);
\filldraw (99,46) circle (0.9cm);
\end{tikzpicture}
\captionsetup{width = 0.9\textwidth}
\caption{The set arising from $\left\{(2,5), (3,1)\right\}$ creates a regular pattern.  The regularity of this set is a consequence of a more general result in \S 2.1.}
\label{fig:twodlattice}
\end{figure}
\end{center}
\vspace{-20pt}
\subsection{The Hidden Structure.} The second author \cite{stein} recently discovered, more or less by accident, that the Ulam sequence has a nontrivial interaction with Fourier series: more precisely, if
we let $(a_n)_{n=1}^{\infty}$ denote the sequence starting with $a_1 = 1, a_2 = 2$, then, empirically, there seems to exist a real number $\alpha \sim 2.571447\dots$ such that
$$ \sum_{n=1}^{N}{ \cos{(\alpha a_n)}} \sim -0.79 N.$$
If the sequence were truly random, we would expect the sum to be at scale $\sim\sqrt{N}$. Thus, this finding indicates the presence of a strong intrinsic structure in the sequence $\alpha a_n$. The
underlying structure is very rigid: for the first $10^7$ terms of the sequence,
$$ \cos{\left( 2.5714474995~ a_n\right)} < 0 \qquad \mbox{for all}~a_n \notin \left\{2, 3, 47, 69\right\}.$$
This type of structure usually indicates periodic behavior, but the sequence does not seem to have periodic behavior of any kind -- the phenomenon, an unusual
connection between additive structures in $\mathbb{N}$ and quasi-periodic behavior, is not understood and also occurs for many other initial conditions (although the arising constants vary). One interesting byproduct
of this discovery is the development of an algorithm by Philip Gibbs \cite{gibbs} (see also the description of Knuth \cite{knuth}) which is much faster than previously existing algorithms whenever
such phenomena are present. This allowed the verification of the presence of the phenomenon for the first $10^9$ elements of the sequence. Daniel Ross studied various aspects of the phenomenon in his 2016 PhD thesis  \cite{ross}. Hinman, Kuca, Schlesinger \& Sheydvasser \cite{hinman} recently undertook an in-depth study of the Ulam sequence and uncovered several striking new properties as well as partially answering
some of the questions raised in the present paper.  We also refer the reader to the recent work of Kuca \cite{kuca} on a related sequence.

\subsection{The Big Picture.} If we return to the original setup, we see that the Ulam sequence is given by a very simple greedy algorithm: to find the next element, perform all possible binary
operations on the given set and add the smallest element with a unique representation. Obviously, this scheme can be implemented for any type of set equipped with
a binary operation and a notion of size.  This generalization gives rise to an incredibly rich structure, which we discuss in this paper -- 
we focus our investigation on objects in $\mathbb{R}^2$ and $\mathbb{R}^3$ equipped with the standard addition.  (We do show, however, that the dynamics of lattice points is universal,
and we also describe the behavior of many Ulam sets over algebraic objects equipped with a notion of size and an associative and commutative binary operation.) Generalizing the
classical Ulam sequence is very much in the spirit of Ulam's musings; directly after introducing the sequence, he writes in a passage that may not be well known:
\begin{quote}
For two dimensions one can imagine the lattice of all integral valued points or the division of the plane into equilateral triangles (the hexagonal division). Starting with one
or a finite number of points of such a subdivision, one can `grow' new points defined recursively by, for example, including a new point if it forms with two previously
defined points the third vertex of a triangle, but only doing it in the case where it is uniquely so related to a previous pair; in other words, we don't grow a point if it should be a vertex
of two triangles with different pairs previously taken. Apparently the properties of the figure growing by this definition are difficult to ascertain. For example [...] it is not easy to decide whether or not there will be infinitely long side branches coming off the `stems'. (Ulam, 1964)
\end{quote} 

It is evident that  Ulam himself was considering a generalization of a more geometric flavor. Although this line of inquiry is potentially interesting, we chose to pursue a more algebraic generalization, and the rest of this paper is 
devoted to studying the additive analogue for vectors. However, the question of `whether or not there will be infinitely long side branches coming off the stems' also arises
naturally in our setup (see Figure 3 below and \S 4).

\begin{center}
\begin{figure}[h!]
\begin{tikzpicture}[scale=0.12]

\filldraw (1,0) circle (0.35cm);
\filldraw (0,1) circle (0.35cm);
\filldraw (1,1) circle (0.35cm);
\filldraw (2,0) circle (0.35cm);
\filldraw (1,2) circle (0.35cm);
\filldraw (2,2) circle (0.35cm);
\filldraw (3,0) circle (0.35cm);
\filldraw (1,3) circle (0.35cm);
\filldraw (4,0) circle (0.35cm);
\filldraw (1,4) circle (0.35cm);
\filldraw (4,3) circle (0.35cm);
\filldraw (5,1) circle (0.35cm);
\filldraw (1,5) circle (0.35cm);
\filldraw (6,0) circle (0.35cm);
\filldraw (1,6) circle (0.35cm);
\filldraw (4,5) circle (0.35cm);
\filldraw (1,7) circle (0.35cm);
\filldraw (6,4) circle (0.35cm);
\filldraw (7,2) circle (0.35cm);
\filldraw (8,0) circle (0.35cm);
\filldraw (4,7) circle (0.35cm);
\filldraw (1,8) circle (0.35cm);
\filldraw (6,6) circle (0.35cm);
\filldraw (1,9) circle (0.35cm);
\filldraw (9,3) circle (0.35cm);
\filldraw (4,9) circle (0.35cm);
\filldraw (6,8) circle (0.35cm);
\filldraw (1,10) circle (0.35cm);
\filldraw (9,5) circle (0.35cm);
\filldraw (11,0) circle (0.35cm);
\filldraw (1,11) circle (0.35cm);
\filldraw (11,2) circle (0.35cm);
\filldraw (9,7) circle (0.35cm);
\filldraw (6,10) circle (0.35cm);
\filldraw (4,11) circle (0.35cm);
\filldraw (1,12) circle (0.35cm);
\filldraw (12,1) circle (0.35cm);
\filldraw (9,9) circle (0.35cm);
\filldraw (13,0) circle (0.35cm);
\filldraw (1,13) circle (0.35cm);
\filldraw (6,12) circle (0.35cm);
\filldraw (4,13) circle (0.35cm);
\filldraw (1,14) circle (0.35cm);
\filldraw (9,11) circle (0.35cm);
\filldraw (14,5) circle (0.35cm);
\filldraw (1,15) circle (0.35cm);
\filldraw (15,1) circle (0.35cm);
\filldraw (6,14) circle (0.35cm);
\filldraw (4,15) circle (0.35cm);
\filldraw (14,7) circle (0.35cm);
\filldraw (9,13) circle (0.35cm);
\filldraw (16,0) circle (0.35cm);
\filldraw (1,16) circle (0.35cm);
\filldraw (16,2) circle (0.35cm);
\filldraw (15,6) circle (0.35cm);
\filldraw (14,9) circle (0.35cm);
\filldraw (1,17) circle (0.35cm);
\filldraw (6,16) circle (0.35cm);
\filldraw (4,17) circle (0.35cm);
\filldraw (9,15) circle (0.35cm);
\filldraw (14,11) circle (0.35cm);
\filldraw (18,0) circle (0.35cm);
\filldraw (1,18) circle (0.35cm);
\filldraw (6,18) circle (0.35cm);
\filldraw (18,6) circle (0.35cm);
\filldraw (1,19) circle (0.35cm);
\filldraw (14,13) circle (0.35cm);
\filldraw (9,17) circle (0.35cm);
\filldraw (4,19) circle (0.35cm);
\filldraw (19,5) circle (0.35cm);
\filldraw (20,1) circle (0.35cm);
\filldraw (1,20) circle (0.35cm);
\filldraw (14,15) circle (0.35cm);
\filldraw (6,20) circle (0.35cm);
\filldraw (1,21) circle (0.35cm);
\filldraw (9,19) circle (0.35cm);
\filldraw (21,2) circle (0.35cm);
\filldraw (4,21) circle (0.35cm);
\filldraw (22,1) circle (0.35cm);
\filldraw (1,22) circle (0.35cm);
\filldraw (14,17) circle (0.35cm);
\filldraw (20,10) circle (0.35cm);
\filldraw (6,22) circle (0.35cm);
\filldraw (9,21) circle (0.35cm);
\filldraw (1,23) circle (0.35cm);
\filldraw (20,12) circle (0.35cm);
\filldraw (4,23) circle (0.35cm);
\filldraw (14,19) circle (0.35cm);
\filldraw (1,24) circle (0.35cm);
\filldraw (24,3) circle (0.35cm);
\filldraw (20,14) circle (0.35cm);
\filldraw (23,9) circle (0.35cm);
\filldraw (9,23) circle (0.35cm);
\filldraw (6,24) circle (0.35cm);
\filldraw (1,25) circle (0.35cm);
\filldraw (14,21) circle (0.35cm);
\filldraw (4,25) circle (0.35cm);
\filldraw (25,4) circle (0.35cm);
\filldraw (23,11) circle (0.35cm);
\filldraw (20,16) circle (0.35cm);
\filldraw (25,6) circle (0.35cm);
\filldraw (26,0) circle (0.35cm);
\filldraw (1,26) circle (0.35cm);
\filldraw (23,13) circle (0.35cm);
\filldraw (9,25) circle (0.35cm);
\filldraw (6,26) circle (0.35cm);
\filldraw (20,18) circle (0.35cm);
\filldraw (25,10) circle (0.35cm);
\filldraw (14,23) circle (0.35cm);
\filldraw (1,27) circle (0.35cm);
\filldraw (27,1) circle (0.35cm);
\filldraw (4,27) circle (0.35cm);
\filldraw (23,15) circle (0.35cm);
\filldraw (25,12) circle (0.35cm);
\filldraw (28,0) circle (0.35cm);
\filldraw (1,28) circle (0.35cm);
\filldraw (20,20) circle (0.35cm);
\filldraw (9,27) circle (0.35cm);
\filldraw (23,17) circle (0.35cm);
\filldraw (6,28) circle (0.35cm);
\filldraw (25,14) circle (0.35cm);
\filldraw (14,25) circle (0.35cm);
\filldraw (1,29) circle (0.35cm);
\filldraw (4,29) circle (0.35cm);
\filldraw (29,5) circle (0.35cm);
\filldraw (25,16) circle (0.35cm);
\filldraw (20,22) circle (0.35cm);
\filldraw (23,19) circle (0.35cm);
\filldraw (1,30) circle (0.35cm);
\filldraw (30,2) circle (0.35cm);
\filldraw (9,29) circle (0.35cm);
\filldraw (14,27) circle (0.35cm);
\filldraw (6,30) circle (0.35cm);
\filldraw (30,6) circle (0.35cm);
\filldraw (25,18) circle (0.35cm);
\filldraw (1,31) circle (0.35cm);
\filldraw (30,8) circle (0.35cm);
\filldraw (31,2) circle (0.35cm);
\filldraw (23,21) circle (0.35cm);
\filldraw (20,24) circle (0.35cm);
\filldraw (4,31) circle (0.35cm);
\filldraw (30,10) circle (0.35cm);
\filldraw (25,20) circle (0.35cm);
\filldraw (1,32) circle (0.35cm);
\filldraw (14,29) circle (0.35cm);
\filldraw (9,31) circle (0.35cm);
\filldraw (30,12) circle (0.35cm);
\filldraw (23,23) circle (0.35cm);
\filldraw (6,32) circle (0.35cm);
\filldraw (20,26) circle (0.35cm);
\filldraw (1,33) circle (0.35cm);
\filldraw (30,14) circle (0.35cm);
\filldraw (4,33) circle (0.35cm);
\filldraw (25,22) circle (0.35cm);
\filldraw (23,25) circle (0.35cm);
\filldraw (30,16) circle (0.35cm);
\filldraw (14,31) circle (0.35cm);
\filldraw (1,34) circle (0.35cm);
\filldraw (34,3) circle (0.35cm);
\filldraw (9,33) circle (0.35cm);
\filldraw (33,9) circle (0.35cm);
\filldraw (34,5) circle (0.35cm);
\filldraw (20,28) circle (0.35cm);
\filldraw (6,34) circle (0.35cm);
\filldraw (25,24) circle (0.35cm);
\filldraw (33,11) circle (0.35cm);
\filldraw (30,18) circle (0.35cm);
\filldraw (1,35) circle (0.35cm);
\filldraw (4,35) circle (0.35cm);
\filldraw (35,4) circle (0.35cm);
\filldraw (23,27) circle (0.35cm);
\filldraw (33,13) circle (0.35cm);
\filldraw (14,33) circle (0.35cm);
\filldraw (36,0) circle (0.35cm);
\filldraw (1,36) circle (0.35cm);
\filldraw (20,30) circle (0.35cm);
\filldraw (30,20) circle (0.35cm);
\filldraw (36,2) circle (0.35cm);
\filldraw (25,26) circle (0.35cm);
\filldraw (9,35) circle (0.35cm);
\filldraw (33,15) circle (0.35cm);
\filldraw (6,36) circle (0.35cm);
\filldraw (23,29) circle (0.35cm);
\filldraw (37,1) circle (0.35cm);
\filldraw (1,37) circle (0.35cm);
\filldraw (33,17) circle (0.35cm);
\filldraw (30,22) circle (0.35cm);
\filldraw (4,37) circle (0.35cm);
\filldraw (25,28) circle (0.35cm);
\filldraw (14,35) circle (0.35cm);
\filldraw (20,32) circle (0.35cm);
\filldraw (38,0) circle (0.35cm);
\filldraw (1,38) circle (0.35cm);
\filldraw (9,37) circle (0.35cm);
\filldraw (33,19) circle (0.35cm);
\filldraw (30,24) circle (0.35cm);
\filldraw (6,38) circle (0.35cm);
\filldraw (23,31) circle (0.35cm);
\filldraw (1,39) circle (0.35cm);
\filldraw (25,30) circle (0.35cm);
\filldraw (38,9) circle (0.35cm);
\filldraw (33,21) circle (0.35cm);
\filldraw (4,39) circle (0.35cm);
\filldraw (20,34) circle (0.35cm);
\filldraw (14,37) circle (0.35cm);
\filldraw (38,11) circle (0.35cm);
\filldraw (30,26) circle (0.35cm);
\filldraw (1,40) circle (0.35cm);
\filldraw (9,39) circle (0.35cm);
\filldraw (38,13) circle (0.35cm);
\filldraw (33,23) circle (0.35cm);
\filldraw (23,33) circle (0.35cm);
\filldraw (6,40) circle (0.35cm);
\filldraw (25,32) circle (0.35cm);
\filldraw (1,41) circle (0.35cm);
\filldraw (30,28) circle (0.35cm);
\filldraw (41,2) circle (0.35cm);
\filldraw (20,36) circle (0.35cm);
\filldraw (4,41) circle (0.35cm);
\filldraw (40,10) circle (0.35cm);
\filldraw (41,5) circle (0.35cm);
\filldraw (33,25) circle (0.35cm);
\filldraw (14,39) circle (0.35cm);
\filldraw (41,7) circle (0.35cm);
\filldraw (40,12) circle (0.35cm);
\filldraw (23,35) circle (0.35cm);
\filldraw (9,41) circle (0.35cm);
\filldraw (1,42) circle (0.35cm);
\filldraw (25,34) circle (0.35cm);
\filldraw (30,30) circle (0.35cm);
\filldraw (6,42) circle (0.35cm);
\filldraw (33,27) circle (0.35cm);
\filldraw (20,38) circle (0.35cm);
\filldraw (1,43) circle (0.35cm);
\filldraw (40,16) circle (0.35cm);
\filldraw (4,43) circle (0.35cm);
\filldraw (14,41) circle (0.35cm);
\filldraw (23,37) circle (0.35cm);
\filldraw (25,36) circle (0.35cm);
\filldraw (40,18) circle (0.35cm);
\filldraw (30,32) circle (0.35cm);
\filldraw (9,43) circle (0.35cm);
\filldraw (33,29) circle (0.35cm);
\filldraw (1,44) circle (0.35cm);
\filldraw (44,3) circle (0.35cm);
\filldraw (44,5) circle (0.35cm);
\filldraw (43,11) circle (0.35cm);
\filldraw (6,44) circle (0.35cm);
\filldraw (44,8) circle (0.35cm);
\filldraw (20,40) circle (0.35cm);
\filldraw (40,20) circle (0.35cm);
\filldraw (43,13) circle (0.35cm);
\filldraw (1,45) circle (0.35cm);
\filldraw (4,45) circle (0.35cm);
\filldraw (45,4) circle (0.35cm);
\filldraw (14,43) circle (0.35cm);
\filldraw (23,39) circle (0.35cm);
\filldraw (33,31) circle (0.35cm);
\filldraw (30,34) circle (0.35cm);
\filldraw (25,38) circle (0.35cm);
\filldraw (43,15) circle (0.35cm);
\filldraw (9,45) circle (0.35cm);
\filldraw (46,1) circle (0.35cm);
\filldraw (1,46) circle (0.35cm);
\filldraw (43,17) circle (0.35cm);
\filldraw (6,46) circle (0.35cm);
\filldraw (20,42) circle (0.35cm);
\filldraw (33,33) circle (0.35cm);
\filldraw (30,36) circle (0.35cm);
\filldraw (47,0) circle (0.35cm);
\filldraw (1,47) circle (0.35cm);
\filldraw (23,41) circle (0.35cm);
\filldraw (14,45) circle (0.35cm);
\filldraw (4,47) circle (0.35cm);
\filldraw (25,40) circle (0.35cm);
\filldraw (9,47) circle (0.35cm);
\filldraw (48,0) circle (0.35cm);
\filldraw (1,48) circle (0.35cm);
\filldraw (33,35) circle (0.35cm);
\filldraw (20,44) circle (0.35cm);
\filldraw (6,48) circle (0.35cm);
\filldraw (30,38) circle (0.35cm);
\filldraw (23,43) circle (0.35cm);
\filldraw (25,42) circle (0.35cm);
\filldraw (1,49) circle (0.35cm);
\filldraw (14,47) circle (0.35cm);
\filldraw (49,3) circle (0.35cm);
\filldraw (4,49) circle (0.35cm);
\filldraw (33,37) circle (0.35cm);
\filldraw (9,49) circle (0.35cm);
\filldraw (30,40) circle (0.35cm);
\filldraw (1,50) circle (0.35cm);
\filldraw (49,10) circle (0.35cm);
\filldraw (50,2) circle (0.35cm);
\filldraw (20,46) circle (0.35cm);
\filldraw (6,50) circle (0.35cm);
\filldraw (49,12) circle (0.35cm);
\filldraw (23,45) circle (0.35cm);
\filldraw (25,44) circle (0.35cm);
\filldraw (49,14) circle (0.35cm);
\filldraw (14,49) circle (0.35cm);
\filldraw (51,1) circle (0.35cm);
\filldraw (33,39) circle (0.35cm);
\filldraw (49,16) circle (0.35cm);
\filldraw (30,42) circle (0.35cm);
\filldraw (20,48) circle (0.35cm);
\filldraw (49,18) circle (0.35cm);
\filldraw (23,47) circle (0.35cm);
\filldraw (25,46) circle (0.35cm);
\filldraw (33,41) circle (0.35cm);
\filldraw (49,20) circle (0.35cm);
\filldraw (53,0) circle (0.35cm);
\filldraw (53,2) circle (0.35cm);
\filldraw (30,44) circle (0.35cm);
\filldraw (49,22) circle (0.35cm);
\filldraw (20,50) circle (0.35cm);
\filldraw (25,48) circle (0.35cm);
\filldraw (23,49) circle (0.35cm);
\filldraw (33,43) circle (0.35cm);
\filldraw (49,24) circle (0.35cm);
\filldraw (30,46) circle (0.35cm);
\filldraw (49,26) circle (0.35cm);
\filldraw (33,45) circle (0.35cm);
\filldraw (25,50) circle (0.35cm);
\filldraw (56,1) circle (0.35cm);
\filldraw (49,28) circle (0.35cm);
\filldraw (56,7) circle (0.35cm);
\filldraw (30,48) circle (0.35cm);
\filldraw (57,0) circle (0.35cm);
\filldraw (57,6) circle (0.35cm);
\filldraw (33,47) circle (0.35cm);
\filldraw (49,30) circle (0.35cm);
\filldraw (58,5) circle (0.35cm);
\filldraw (30,50) circle (0.35cm);
\filldraw (49,32) circle (0.35cm);
\filldraw (33,49) circle (0.35cm);
\filldraw (49,34) circle (0.35cm);
\filldraw (56,21) circle (0.35cm);
\filldraw (60,2) circle (0.35cm);
\filldraw (56,23) circle (0.35cm);
\filldraw (49,36) circle (0.35cm);
\filldraw (60,11) circle (0.35cm);
\filldraw (56,25) circle (0.35cm);
\filldraw (60,13) circle (0.35cm);
\filldraw (62,0) circle (0.35cm);
\filldraw (49,38) circle (0.35cm);
\filldraw (56,27) circle (0.35cm);
\filldraw (62,6) circle (0.35cm);
\filldraw (60,17) circle (0.35cm);
\filldraw (60,19) circle (0.35cm);
\filldraw (56,29) circle (0.35cm);
\filldraw (63,5) circle (0.35cm);
\filldraw (49,40) circle (0.35cm);
\filldraw (56,31) circle (0.35cm);
\filldraw (60,23) circle (0.35cm);
\filldraw (49,42) circle (0.35cm);
\filldraw (64,10) circle (0.35cm);
\filldraw (56,33) circle (0.35cm);
\filldraw (60,25) circle (0.35cm);
\filldraw (64,12) circle (0.35cm);
\filldraw (65,4) circle (0.35cm);
\filldraw (60,27) circle (0.35cm);
\filldraw (49,44) circle (0.35cm);
\filldraw (65,11) circle (0.35cm);
\filldraw (56,35) circle (0.35cm);
\filldraw (60,29) circle (0.35cm);
\filldraw (56,37) circle (0.35cm);
\filldraw (49,46) circle (0.35cm);
\filldraw (67,6) circle (0.35cm);
\filldraw (60,31) circle (0.35cm);
\filldraw (56,39) circle (0.35cm);
\filldraw (68,7) circle (0.35cm);
\filldraw (60,33) circle (0.35cm);
\filldraw (49,48) circle (0.35cm);
\filldraw (68,9) circle (0.35cm);
\filldraw (69,0) circle (0.35cm);
\filldraw (69,6) circle (0.35cm);
\filldraw (56,41) circle (0.35cm);
\filldraw (60,35) circle (0.35cm);
\filldraw (49,50) circle (0.35cm);
\filldraw (70,2) circle (0.35cm);
\filldraw (60,37) circle (0.35cm);
\filldraw (56,43) circle (0.35cm);
\filldraw (60,39) circle (0.35cm);
\filldraw (56,45) circle (0.35cm);
\filldraw (60,41) circle (0.35cm);
\filldraw (56,47) circle (0.35cm);
\filldraw (60,43) circle (0.35cm);
\filldraw (56,49) circle (0.35cm);
\filldraw (60,45) circle (0.35cm);
\filldraw (60,47) circle (0.35cm);
\filldraw (60,49) circle (0.35cm);
\end{tikzpicture}
\captionsetup{width = 0.95\textwidth}
\caption{The set arising from $\left\{(1,0), (2,0), (0,1)\right\}$, the classical Ulam sequence on the $x-$axis augmented by $(0,1)$. Regular columns branch
away from the irregular sequence.}
\label{fig:ulamsmall}
\end{figure}
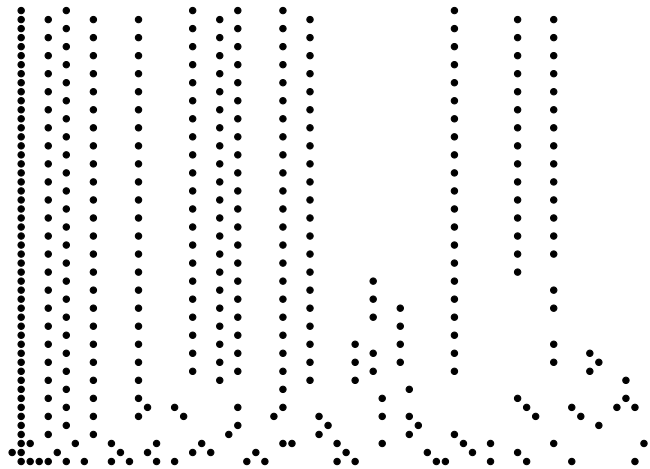
\end{center}
\vspace{-10pt}
Figure \ref{fig:ulamsmall}, which shows the Ulam set generated by $\left\{(1,0), (2,0), (0,1)\right\}$ (see Figure \ref{fig:ulammed} for a larger scale), motivated much of our inquiry into the internal structure of these graphs. In particular, this example recreates the classical Ulam
sequence on the $x-$axis (about which little is known). One main result of this paper (\S 4) implies that for each fixed value of $x\in \mathbb{N}$, the $y-$coordinates of elements in the set are either bounded  in size or eventually become periodic (where the period is some power of 2).

\subsection{Ulam Sets}
We define Ulam sequences in $\mathbb{R}_{\geq 0}^n$ (the set of nonzero vectors all of whose components are nonnegative) by specifying a set of initial vectors $\left\{v_1, \dots, v_k\right\}$ and then repeatedly
adding the smallest vector (with respect to Euclidean norm; see \S 3) that can be uniquely written as the sum of two distinct vectors already in the set.  At any stage of the construction, there may be two or more vectors with the same size each with a unique representation.  In this case, we can simply add all of them at once; having initial conditions contained in $\mathbb{R}^n_{\geq 0}$ guarantees that they can be added one by one in any arbitrary order without affecting the representations of the others. However, this ambiguity makes it clear that there can be no canonical notion of sequence, which is
why we will refer to these objects as unordered \textit{Ulam sets}.\\

\textit{Remarks}.
\begin{enumerate}
\item It is not clear how one would define an Ulam sequence allowing initial elements with negative initial components.  Even for a one-dimensional Ulam-type sequence, allowing negative initial elements proves difficult.  The greedy algorithm does
not work because two numbers with large absolute value can sum to a number with smaller absolute value.  This possibility can potentially destroy the set property of uniqueness of representation retroactively, and the order in which we add vectors tied for smallest becomes a nontrivial consideration. We thus restrict ourselves in the remainder of this paper to nonzero initial elements $\left\{v_1, \dots, v_k\right\} \subset \mathbb{R}_{\geq 0}^n$  containing only nonnegative components.
\item It is not difficult to see that all such sequences are necessarily infinite.  If such a sequence were finite, we could select the two vectors with the largest $x_1$-component (breaking ties by considering the $x_2$-component, etc.).  The resultant sum of these two vectors would be unique among sums of distinct vectors already in the set, which leads to a contradiction.
\item The definition makes sense for arbitrary vectors equipped with a notion of size.  In general, 
the most interesting dynamics can be expected when $a_1v_1 + a_2 v_2 + \dots + a_kv_k =0$ has nontrivial solutions over $\mathbb{Z}^k$ (\S 2).  However, even an absence of nontrivial solutions gives rise to many complexities (\S 2.4).
\end{enumerate}

\subsection{Outline of the Paper.} We start by presenting several examples in \S 2 and showing how, in many cases, we can establish the existence of regular repeating patterns. \S 3 contains several universality results stating that, for a large number of initial conditions, the arising behavior is universal: in particular, Ulam sets depend only very weakly on the
notion of norm used (despite the definition's emphasis on adding the `smallest' vector with a unique representation). \S 3 also exhibits numerical results suggesting
that the generic case of three initial conditions has surprisingly complex structures and symmetries that we do not rigorously understand. \S 4 is devoted to the column phenomenon: we provide at least a partial understanding of regular periodic structures such as the ones observed to be branching away from the $x-$axis in Figure \ref{fig:ulamsmall} (above).  \S 5 concludes the paper by listing several open problems.

\section{Lattices and Non-Lattices}
\subsection{Lattices} We first consider the case of two vectors $\left\{v_1, v_2\right\} \subset \mathbb{R}_{\geq 0}^2$. If $v_2 = c v_1$, then we merely recreate the one-dimensional classical Ulam sequence with initial conditions
$\left\{1, c\right\}$ where $c > 0$.  If $c = p/q$ is rational, then we can reduce it to the one-dimensional Ulam sequence with initial conditions $\left\{p, q \right\}$. If $c$ is irrational, then we will be able to use
Lemma 2 (\S 3.2) to deduce that the elements in the one-dimensional sequence demonstrate the universal dynamics created by the initial set $\left\{(1,0), (0,1)\right\} \subset \mathbb{R}^2_{\geq 0}$. We now determine the dynamics of such sets.

\begin{center}
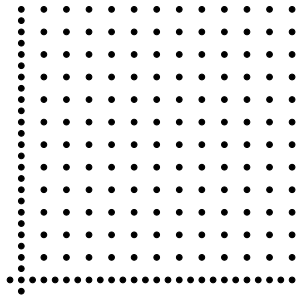
\begin{figure}[h!]
\begin{tikzpicture}[scale=0.15]
\filldraw (1,0) circle (0.25cm);
\filldraw (0,1) circle (0.25cm);
\filldraw (1,1) circle (0.25cm);
\filldraw (1,2) circle (0.25cm);
\filldraw (2,1) circle (0.25cm);
\filldraw (1,3) circle (0.25cm);
\filldraw (3,1) circle (0.25cm);
\filldraw (1,4) circle (0.25cm);
\filldraw (4,1) circle (0.25cm);
\filldraw (1,5) circle (0.25cm);
\filldraw (5,1) circle (0.25cm);
\filldraw (1,6) circle (0.25cm);
\filldraw (6,1) circle (0.25cm);
\filldraw (1,7) circle (0.25cm);
\filldraw (7,1) circle (0.25cm);
\filldraw (1,8) circle (0.25cm);
\filldraw (8,1) circle (0.25cm);
\filldraw (1,9) circle (0.25cm);
\filldraw (9,1) circle (0.25cm);
\filldraw (1,10) circle (0.25cm);
\filldraw (10,1) circle (0.25cm);
\filldraw (1,11) circle (0.25cm);
\filldraw (11,1) circle (0.25cm);
\filldraw (1,12) circle (0.25cm);
\filldraw (12,1) circle (0.25cm);
\filldraw (1,13) circle (0.25cm);
\filldraw (13,1) circle (0.25cm);
\filldraw (1,14) circle (0.25cm);
\filldraw (14,1) circle (0.25cm);
\filldraw (1,15) circle (0.25cm);
\filldraw (15,1) circle (0.25cm);
\filldraw (1,16) circle (0.25cm);
\filldraw (16,1) circle (0.25cm);
\filldraw (1,17) circle (0.25cm);
\filldraw (17,1) circle (0.25cm);
\filldraw (1,18) circle (0.25cm);
\filldraw (18,1) circle (0.25cm);
\filldraw (1,19) circle (0.25cm);
\filldraw (19,1) circle (0.25cm);
\filldraw (1,20) circle (0.25cm);
\filldraw (20,1) circle (0.25cm);
\filldraw (1,21) circle (0.25cm);
\filldraw (21,1) circle (0.25cm);
\filldraw (1,22) circle (0.25cm);
\filldraw (22,1) circle (0.25cm);
\filldraw (1,23) circle (0.25cm);
\filldraw (23,1) circle (0.25cm);
\filldraw (1,24) circle (0.25cm);
\filldraw (24,1) circle (0.25cm);
\filldraw (1,25) circle (0.25cm);
\filldraw (25,1) circle (0.25cm);
\filldraw (3,3) circle (0.25cm);
\filldraw (3,5) circle (0.25cm);
\filldraw (3,7) circle (0.25cm);
\filldraw (3,9) circle (0.25cm);
\filldraw (3,11) circle (0.25cm);
\filldraw (3,13) circle (0.25cm);
\filldraw (3,15) circle (0.25cm);
\filldraw (3,17) circle (0.25cm);
\filldraw (3,19) circle (0.25cm);
\filldraw (3,21) circle (0.25cm);
\filldraw (3,23) circle (0.25cm);
\filldraw (3,25) circle (0.25cm);
\filldraw (5,3) circle (0.25cm);
\filldraw (5,5) circle (0.25cm);
\filldraw (5,7) circle (0.25cm);
\filldraw (5,9) circle (0.25cm);
\filldraw (5,11) circle (0.25cm);
\filldraw (5,13) circle (0.25cm);
\filldraw (5,15) circle (0.25cm);
\filldraw (5,17) circle (0.25cm);
\filldraw (5,19) circle (0.25cm);
\filldraw (5,21) circle (0.25cm);
\filldraw (5,23) circle (0.25cm);
\filldraw (5,25) circle (0.25cm);
\filldraw (7,3) circle (0.25cm);
\filldraw (7,5) circle (0.25cm);
\filldraw (7,7) circle (0.25cm);
\filldraw (7,9) circle (0.25cm);
\filldraw (7,11) circle (0.25cm);
\filldraw (7,13) circle (0.25cm);
\filldraw (7,15) circle (0.25cm);
\filldraw (7,17) circle (0.25cm);
\filldraw (7,19) circle (0.25cm);
\filldraw (7,21) circle (0.25cm);
\filldraw (7,23) circle (0.25cm);
\filldraw (7,25) circle (0.25cm);
\filldraw (9,3) circle (0.25cm);
\filldraw (9,5) circle (0.25cm);
\filldraw (9,7) circle (0.25cm);
\filldraw (9,9) circle (0.25cm);
\filldraw (9,11) circle (0.25cm);
\filldraw (9,13) circle (0.25cm);
\filldraw (9,15) circle (0.25cm);
\filldraw (9,17) circle (0.25cm);
\filldraw (9,19) circle (0.25cm);
\filldraw (9,21) circle (0.25cm);
\filldraw (9,23) circle (0.25cm);
\filldraw (9,25) circle (0.25cm);
\filldraw (11,3) circle (0.25cm);
\filldraw (11,5) circle (0.25cm);
\filldraw (11,7) circle (0.25cm);
\filldraw (11,9) circle (0.25cm);
\filldraw (11,11) circle (0.25cm);
\filldraw (11,13) circle (0.25cm);
\filldraw (11,15) circle (0.25cm);
\filldraw (11,17) circle (0.25cm);
\filldraw (11,19) circle (0.25cm);
\filldraw (11,21) circle (0.25cm);
\filldraw (11,23) circle (0.25cm);
\filldraw (11,25) circle (0.25cm);
\filldraw (13,3) circle (0.25cm);
\filldraw (13,5) circle (0.25cm);
\filldraw (13,7) circle (0.25cm);
\filldraw (13,9) circle (0.25cm);
\filldraw (13,11) circle (0.25cm);
\filldraw (13,13) circle (0.25cm);
\filldraw (13,15) circle (0.25cm);
\filldraw (13,17) circle (0.25cm);
\filldraw (13,19) circle (0.25cm);
\filldraw (13,21) circle (0.25cm);
\filldraw (13,23) circle (0.25cm);
\filldraw (13,25) circle (0.25cm);
\filldraw (15,3) circle (0.25cm);
\filldraw (15,5) circle (0.25cm);
\filldraw (15,7) circle (0.25cm);
\filldraw (15,9) circle (0.25cm);
\filldraw (15,11) circle (0.25cm);
\filldraw (15,13) circle (0.25cm);
\filldraw (15,15) circle (0.25cm);
\filldraw (15,17) circle (0.25cm);
\filldraw (15,19) circle (0.25cm);
\filldraw (15,21) circle (0.25cm);
\filldraw (15,23) circle (0.25cm);
\filldraw (15,25) circle (0.25cm);
\filldraw (17,3) circle (0.25cm);
\filldraw (17,5) circle (0.25cm);
\filldraw (17,7) circle (0.25cm);
\filldraw (17,9) circle (0.25cm);
\filldraw (17,11) circle (0.25cm);
\filldraw (17,13) circle (0.25cm);
\filldraw (17,15) circle (0.25cm);
\filldraw (17,17) circle (0.25cm);
\filldraw (17,19) circle (0.25cm);
\filldraw (17,21) circle (0.25cm);
\filldraw (17,23) circle (0.25cm);
\filldraw (17,25) circle (0.25cm);
\filldraw (19,3) circle (0.25cm);
\filldraw (19,5) circle (0.25cm);
\filldraw (19,7) circle (0.25cm);
\filldraw (19,9) circle (0.25cm);
\filldraw (19,11) circle (0.25cm);
\filldraw (19,13) circle (0.25cm);
\filldraw (19,15) circle (0.25cm);
\filldraw (19,17) circle (0.25cm);
\filldraw (19,19) circle (0.25cm);
\filldraw (19,21) circle (0.25cm);
\filldraw (19,23) circle (0.25cm);
\filldraw (19,25) circle (0.25cm);
\filldraw (21,3) circle (0.25cm);
\filldraw (21,5) circle (0.25cm);
\filldraw (21,7) circle (0.25cm);
\filldraw (21,9) circle (0.25cm);
\filldraw (21,11) circle (0.25cm);
\filldraw (21,13) circle (0.25cm);
\filldraw (21,15) circle (0.25cm);
\filldraw (21,17) circle (0.25cm);
\filldraw (21,19) circle (0.25cm);
\filldraw (21,21) circle (0.25cm);
\filldraw (21,23) circle (0.25cm);
\filldraw (21,25) circle (0.25cm);
\filldraw (23,3) circle (0.25cm);
\filldraw (23,5) circle (0.25cm);
\filldraw (23,7) circle (0.25cm);
\filldraw (23,9) circle (0.25cm);
\filldraw (23,11) circle (0.25cm);
\filldraw (23,13) circle (0.25cm);
\filldraw (23,15) circle (0.25cm);
\filldraw (23,17) circle (0.25cm);
\filldraw (23,19) circle (0.25cm);
\filldraw (23,21) circle (0.25cm);
\filldraw (23,23) circle (0.25cm);
\filldraw (23,25) circle (0.25cm);
\filldraw (25,3) circle (0.25cm);
\filldraw (25,5) circle (0.25cm);
\filldraw (25,7) circle (0.25cm);
\filldraw (25,9) circle (0.25cm);
\filldraw (25,11) circle (0.25cm);
\filldraw (25,13) circle (0.25cm);
\filldraw (25,15) circle (0.25cm);
\filldraw (25,17) circle (0.25cm);
\filldraw (25,19) circle (0.25cm);
\filldraw (25,21) circle (0.25cm);
\filldraw (25,23) circle (0.25cm);
\filldraw (25,25) circle (0.25cm);
\end{tikzpicture}
\captionsetup{width = 0.95\textwidth}
\caption{The set arising from $\left\{(1,0), (0,1)\right\}$.}
\label{fig:twodlattice}
\end{figure}
\end{center}
\vspace{-20pt}

\begin{theorem}[Lattice in Two Dimensions]
The set arising from nonparallel $\left\{v_1, v_2\right\} \subset \mathbb{R}_{\geq 0}^2$ consists of all vectors of the form $v_1 +n v_2$ and $n v_1 +v_2$ for $n \in \mathbb{N}$ and all vectors of the form $m v_1 +n v_2$ with $m, n \geq 3$ both odd integers.  
\end{theorem}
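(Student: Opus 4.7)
The plan is to reduce to the canonical case $v_1 = (1,0), v_2 = (0,1) \in \mathbb{Z}_{\geq 0}^2$ and then prove the characterization by strong induction on Euclidean norm. Since $v_1, v_2$ are nonparallel and nonzero with nonnegative components, they are linearly independent and the map $(m, n) \mapsto m v_1 + n v_2$ is a bijection from $\mathbb{Z}_{\geq 0}^2$ to the nonnegative integer lattice they generate. Invoking the universality result Lemma 2 of \S 3.2 lets us work with whichever norm is convenient; Euclidean norm is strictly monotonic under adding a nonzero nonneg-coordinate vector (since $\langle x, y \rangle \geq 0$ for such $x, y$), so every decomposition $w = x + y$ uses summands strictly smaller than $w$. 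This powers the strong induction: when $w$ is considered, every candidate summand has already been classified.

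Let $A$ denote the proposed set, so $(a, b) \in A$ iff $a = 1$, $b = 1$, or both $a, b \geq 3$ are odd. At the induction step I must show that $w = (m, n)$ has exactly one unordered representation $w = x + y$ with $x \neq y$ and both in $A$ if and only if $w \in A$. In Case (B), $w = (1, n)$: since $a + c = 1$, one summand has first coordinate $0$, and $(0, k) \in A$ forces $k = 1$; this yields the unique decomposition $(1, n-1) + (0, 1)$. In Case (C), $w = (m, n)$ with $m, n \geq 3$ both odd: since $m, n$ are odd, exactly one of $\{a, c\}$ and one of $\{b, d\}$ is even. A short chase through the three membership conditions for $A$ rules out every decomposition in which some summand lies in the interior odd-odd family, leaving the unique spine-spine decomposition $(1, n-1) + (m-1, 1)$. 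In Case (E), axis points $(m, 0)$ or $(0, n)$ with the relevant coordinate $\geq 2$: the only $A$-vectors on an axis are $v_1$ and $v_2$, so no valid decomposition exists.

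The main obstacle is Case (D), namely showing that every $w = (m, n) \notin A$ with both coordinates $\geq 2$ admits at least two distinct decompositions. For $m = 2$, the pair $(1, n) + (1, 0)$ and the pair $(1, n-1) + (1, 1)$ both work; the case $n = 2$ is symmetric. For $m, n \geq 4$ both even, the spine-spine pair $(1, n-1) + (m-1, 1)$ and the pair $(1, 1) + (m-1, n-1)$ both work, the latter valid because $m - 1, n - 1 \geq 3$ are both odd. The most delicate subcase is mixed parity, say $m \geq 4$ even and $n \geq 3$ odd: here the parameterization $\{(1, \beta), (m-1, n-\beta)\}$ with $\beta$ ranging over even integers in $\{0, 2, \ldots, n-1\}$ produces $(n+1)/2 \geq 2$ valid decompositions, because for each such $\beta$ either $n - \beta = 1$ (so $(m-1, 1)$ is on the spine) or $n - \beta \geq 3$ is odd (so $(m-1, n-\beta)$ is in the interior odd-odd family). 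Once this parity bookkeeping is verified, the induction closes and yields $S = A$.
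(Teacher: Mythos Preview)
Your proof is correct and follows essentially the same approach as the paper: reduce to coefficient space via linear independence, establish the spine elements $(1,n)$ and $(m,1)$ by induction, and then handle the interior by a parity analysis showing that the spine--spine decomposition $(1,n-1)+(m-1,1)$ is the unique one when $m,n\geq 3$ are both odd, while exhibiting a second representation in each remaining parity case. The only differences are organizational---you invoke Lemma~2 explicitly (a forward reference, though logically independent) where the paper simply notes that linear independence makes coefficients unique, and in the mixed-parity subcase you parameterize a whole family of representations where the paper just writes down one extra, namely $((m-1)v_1+nv_2)+v_1$.
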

\begin{proof}
All terms in the sequence are contained in the set
$$\left\{ k_1v_1+k_2v_2: k_1, k_2 \in \mathbb{N} \right\}.$$ 
Since $v_1$ and $v_2$ are linearly independent, the above representation is unique. Whether $mv_1+nv_2$  is contained
in the set can be determined by knowing which elements of the form
$$\left\{ k_1v_1+k_2v_2: 0 \leq k_1 \leq m \wedge 0 \leq k_2 \leq n  \right\}$$ 
are contained in the Ulam set.  First, $v_1+v_2$ is uniquely representable and is thus in the Ulam set.  We can now see inductively that all elements 
$$ \left\{v_1+nv_2 : n \in \mathbb{N} \right\} \qquad \mbox{and} \qquad  \left\{nv_1+v_2 : n \in \mathbb{N} \right\} $$
are uniquely representable and hence are contained in the set. We will now show by induction that for $m,n \geq 2$, the vector $mv_1+nv_2$ is an element iff both $m$ and $n$ are odd.  The base case is $m=2$ or $n=2$.  We have 
$$2v_1 + 2v_2=(2v_1+v_2)+v_2=(v_1+2v_2)+v_1,$$ 
implying that it is not included.  By the same token, for $n\geq 3$, 
$$2v_1+nv_2=(v_1+v_2)+(v_1+(n-1)v_2)=(v_1+nv_2)+v_1$$  
is is not unique either.
 Symmetrically, vectors of the form $nv_1+2v_2$ are excluded for the same reason.  We now consider vectors $mv_1+nv_2$ with $m,n \geq 3$ and note that we always have a representation of the type
$$mv_1+nv_2=(v_1+(n-1)v_2)+((m-1)v_1+v_2),$$
using vectors already established to be in the set. Depending on the parity of $m$ and $n$, we can now distinguish four cases.
If $m$ and $n$ are both even, then we get a second representation 
$$mv_1+nv_2=((m-1)v_1+(n-1)v_2)+(v_1+v_2),$$
where $(m-1)v_1+(n-1)v_2$ is contained in the set by the inductive hypothesis.
If $m$ is even and $n$ is odd, then
$$mv_1+nv_2=((m-1)v_1+nv_2)+v_1$$
is a second representation, and the case of $m$ odd, $n$ even follows by symmetry.
It remains to show that there is no second representation when both $m$ and $n$ are both odd.  Let $S_1$ denote the set of vectors of the form $v_1+nv_2$ or $nv_1+v_2$, and let $S_2$ denote all the other vectors in the Ulam set with $x-$coordinate at most $m$ and $y-$coordinate at most $n$.  Note that, by hypothesis, the coefficients of $v_1$ and $v_2$ are both odd for all elements of $S_2$.  The fact that $m,n \geq 3$ means that $mv_1+nv_2$ can be written uniquely as the sum of 2 elements of $S_1$: this is the representation we found before beginning casework.  The sum of any 2 elements of $S_2$ has even coefficients for $v_1$ and $v_2$, so $mv_1+nv_2$ cannot be expressed in this way.  Similarly, the sum of an element of $S_1$ and an element of $S_2$ must have at least 1 even coefficient (from the sum of the 1 in the $S_1$ element and an odd coefficient in the $S_2$ element).  This exhausts all possibilities.
\end{proof}

\subsection{Special Cases with Regular Behavior.} The purpose of this section is to demonstrate that a variety of cases can actually be rigorously dealt with; the proofs rely mainly on using the right type of induction and are only sketched.

\subsubsection{$\left\{(2,0), (0,1), (3,1)\right\}$}  This sequence consists of exactly the points $(2,0)$ and $(0,1)$, and all points of the form $(n,1), (2,n)$, and $(3,n)$ where $n \geq 2$.

\begin{center}
\begin{figure}[h!]
\begin{tikzpicture}[scale=0.12]

\filldraw (2,0) circle (0.2cm);
\filldraw (0,1) circle (0.2cm);
\filldraw (2,2) circle (0.2cm);
\filldraw (3,2) circle (0.2cm);
\filldraw (2,1) circle (0.2cm);
\filldraw (2,3) circle (0.2cm);
\filldraw (3,3) circle (0.2cm);
\filldraw (3,1) circle (0.2cm);
\filldraw (2,4) circle (0.2cm);
\filldraw (3,4) circle (0.2cm);
\filldraw (4,1) circle (0.2cm);
\filldraw (2,5) circle (0.2cm);
\filldraw (3,5) circle (0.2cm);
\filldraw (5,1) circle (0.2cm);
\filldraw (2,6) circle (0.2cm);
\filldraw (3,6) circle (0.2cm);
\filldraw (6,1) circle (0.2cm);
\filldraw (2,7) circle (0.2cm);
\filldraw (3,7) circle (0.2cm);
\filldraw (7,1) circle (0.2cm);
\filldraw (2,8) circle (0.2cm);
\filldraw (3,8) circle (0.2cm);
\filldraw (8,1) circle (0.2cm);
\filldraw (2,9) circle (0.2cm);
\filldraw (3,9) circle (0.2cm);
\filldraw (9,1) circle (0.2cm);
\filldraw (2,10) circle (0.2cm);
\filldraw (3,10) circle (0.2cm);
\filldraw (10,1) circle (0.2cm);
\filldraw (2,11) circle (0.2cm);
\filldraw (3,11) circle (0.2cm);
\filldraw (11,1) circle (0.2cm);
\filldraw (2,12) circle (0.2cm);
\filldraw (3,12) circle (0.2cm);
\filldraw (12,1) circle (0.2cm);
\filldraw (2,13) circle (0.2cm);
\filldraw (3,13) circle (0.2cm);
\filldraw (13,1) circle (0.2cm);
\filldraw (2,14) circle (0.2cm);
\filldraw (3,14) circle (0.2cm);
\filldraw (14,1) circle (0.2cm);
\filldraw (2,15) circle (0.2cm);
\filldraw (3,15) circle (0.2cm);
\filldraw (15,1) circle (0.2cm);
\filldraw (2,16) circle (0.2cm);
\filldraw (3,16) circle (0.2cm);
\filldraw (16,1) circle (0.2cm);
\filldraw (2,17) circle (0.2cm);
\filldraw (3,17) circle (0.2cm);
\filldraw (17,1) circle (0.2cm);
\filldraw (2,18) circle (0.2cm);
\filldraw (3,18) circle (0.2cm);
\filldraw (18,1) circle (0.2cm);
\filldraw (2,19) circle (0.2cm);
\filldraw (3,19) circle (0.2cm);
\filldraw (19,1) circle (0.2cm);
\filldraw (2,20) circle (0.2cm);
\filldraw (3,20) circle (0.2cm);
\filldraw (20,1) circle (0.2cm);
\filldraw (2,21) circle (0.2cm);
\filldraw (3,21) circle (0.2cm);
\filldraw (21,1) circle (0.2cm);
\filldraw (2,22) circle (0.2cm);
\filldraw (3,22) circle (0.2cm);
\filldraw (22,1) circle (0.2cm);
\filldraw (2,23) circle (0.2cm);
\filldraw (3,23) circle (0.2cm);
\filldraw (23,1) circle (0.2cm);
\filldraw (2,24) circle (0.2cm);
\filldraw (3,24) circle (0.2cm);
\filldraw (24,1) circle (0.2cm);
\filldraw (2,25) circle (0.2cm);
\filldraw (3,25) circle (0.2cm);
\filldraw (25,1) circle (0.2cm);
\filldraw (2,26) circle (0.2cm);
\filldraw (3,26) circle (0.2cm);
\filldraw (26,1) circle (0.2cm);
\filldraw (2,27) circle (0.2cm);
\filldraw (3,27) circle (0.2cm);
\filldraw (27,1) circle (0.2cm);
\filldraw (2,28) circle (0.2cm);
\filldraw (3,28) circle (0.2cm);
\filldraw (28,1) circle (0.2cm);
\filldraw (2,29) circle (0.2cm);
\filldraw (3,29) circle (0.2cm);
\filldraw (29,1) circle (0.2cm);
\end{tikzpicture}
\captionsetup{width = 0.8\textwidth}
\caption{The set arising from $\left\{(2,0), (0,1), (3,1)\right\}$.}
\label{fig:twodlattice}
\end{figure}
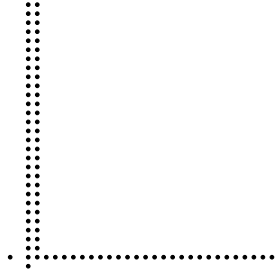
\end{center}
\vspace{-20pt}

\begin{proof}
 We start by noting that there can be no point other than $(2,0)$ on the $x-$axis since $(2,0)$ is the only point with $y-$component 0.  The same argument tells us that $(0,1)$ is the only point on the $y-$axis, and, by the same token, there can never be any point with $x-$component 1.  Now for $n \geq 2$, we have $(n,1)=(n-2,1)+(2,0)$ uniquely.  (This is simple $n \rightarrow n+2$ induction with base cases $(2,1)$ and $(3,1)$.)  Inductively, we also have $(2,n)=(2,n-1)+(0,1)$ uniquely for $n \geq 2$, so all points of the form $(2,n)$ are included.  Similarly, $(3,n)=(3,n-1)+(0,1)$ uniquely.  Note that, again, the absence of any element of the Ulam set with $x-$component 1 makes uniqueness easy to see.  Next, we have $(4,2)=(2,2)+(2,0)=(4,1)+(0,1)$ not uniquely.  And for $n \geq 5$, $(n,2)=(n,1)+(0,1)=(n-2,1)+(2,1)$ is also excluded.  For $n \geq 4$, we have $(4,n)=(2,0)+(2,n)=(2,1)+(2,n-1)$ not uniquely.  Finally, for any remaining `interior point' with $m \geq 5$, $n \geq 3$, we have that $(m,n)=(m-2,1)+(2,n-1)=(m-3,1)+(3,n-1)$ is not unique and is thus excluded.
\end{proof}

\subsubsection{$\left\{(1,0), (0,1), (2,3)\right\}$}  This sequence consists of all points of the form $(n,1)$ and $(1,n)$ where $n \in \mathbb{N}$, the point $(2,3)$, and all points of the form $(2n+4,2m+3)$ for $m,n \in \mathbb{N}.$  The proof proceeds using parity distinctions as in Theorem 1.

\begin{center}
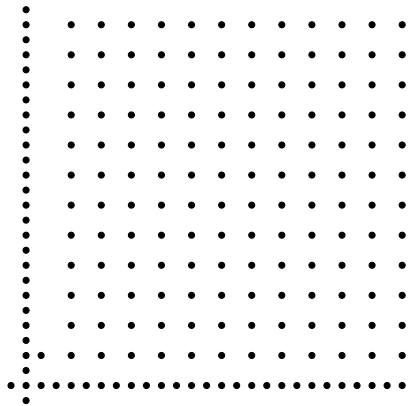
\begin{figure}[h!]
\begin{tikzpicture}[scale=0.1]
\begin{tikzpicture}[scale=0.2]

\filldraw (1,0) circle (0.2cm);
\filldraw (0,1) circle (0.2cm);
\filldraw (1,1) circle (0.2cm);
\filldraw (2,3) circle (0.2cm);
\filldraw (2,1) circle (0.2cm);
\filldraw (1,2) circle (0.2cm);
\filldraw (3,1) circle (0.2cm);
\filldraw (1,3) circle (0.2cm);
\filldraw (4,1) circle (0.2cm);
\filldraw (1,4) circle (0.2cm);
\filldraw (5,1) circle (0.2cm);
\filldraw (1,5) circle (0.2cm);
\filldraw (6,1) circle (0.2cm);
\filldraw (1,6) circle (0.2cm);
\filldraw (7,1) circle (0.2cm);
\filldraw (1,7) circle (0.2cm);
\filldraw (8,1) circle (0.2cm);
\filldraw (1,8) circle (0.2cm);
\filldraw (9,1) circle (0.2cm);
\filldraw (1,9) circle (0.2cm);
\filldraw (10,1) circle (0.2cm);
\filldraw (1,10) circle (0.2cm);
\filldraw (11,1) circle (0.2cm);
\filldraw (1,11) circle (0.2cm);
\filldraw (12,1) circle (0.2cm);
\filldraw (1,12) circle (0.2cm);
\filldraw (13,1) circle (0.2cm);
\filldraw (1,13) circle (0.2cm);
\filldraw (14,1) circle (0.2cm);
\filldraw (1,14) circle (0.2cm);
\filldraw (15,1) circle (0.2cm);
\filldraw (1,15) circle (0.2cm);
\filldraw (16,1) circle (0.2cm);
\filldraw (1,16) circle (0.2cm);
\filldraw (17,1) circle (0.2cm);
\filldraw (1,17) circle (0.2cm);
\filldraw (18,1) circle (0.2cm);
\filldraw (1,18) circle (0.2cm);
\filldraw (19,1) circle (0.2cm);
\filldraw (1,19) circle (0.2cm);
\filldraw (20,1) circle (0.2cm);
\filldraw (1,20) circle (0.2cm);
\filldraw (21,1) circle (0.2cm);
\filldraw (1,21) circle (0.2cm);
\filldraw (22,1) circle (0.2cm);
\filldraw (1,22) circle (0.2cm);
\filldraw (23,1) circle (0.2cm);
\filldraw (1,23) circle (0.2cm);
\filldraw (24,1) circle (0.2cm);
\filldraw (1,24) circle (0.2cm);
\filldraw (25,1) circle (0.2cm);
\filldraw (1,25) circle (0.2cm);
\filldraw (26,1) circle (0.2cm);
\filldraw (1,26) circle (0.2cm);
\filldraw (4,3) circle (0.2cm);
\filldraw (4,5) circle (0.2cm);
\filldraw (4,7) circle (0.2cm);
\filldraw (4,9) circle (0.2cm);
\filldraw (4,11) circle (0.2cm);
\filldraw (4,13) circle (0.2cm);
\filldraw (4,15) circle (0.2cm);
\filldraw (4,17) circle (0.2cm);
\filldraw (4,19) circle (0.2cm);
\filldraw (4,21) circle (0.2cm);
\filldraw (4,23) circle (0.2cm);
\filldraw (4,25) circle (0.2cm);
\filldraw (6,3) circle (0.2cm);
\filldraw (6,5) circle (0.2cm);
\filldraw (6,7) circle (0.2cm);
\filldraw (6,9) circle (0.2cm);
\filldraw (6,11) circle (0.2cm);
\filldraw (6,13) circle (0.2cm);
\filldraw (6,15) circle (0.2cm);
\filldraw (6,17) circle (0.2cm);
\filldraw (6,19) circle (0.2cm);
\filldraw (6,21) circle (0.2cm);
\filldraw (6,23) circle (0.2cm);
\filldraw (6,25) circle (0.2cm);
\filldraw (8,3) circle (0.2cm);
\filldraw (8,5) circle (0.2cm);
\filldraw (8,7) circle (0.2cm);
\filldraw (8,9) circle (0.2cm);
\filldraw (8,11) circle (0.2cm);
\filldraw (8,13) circle (0.2cm);
\filldraw (8,15) circle (0.2cm);
\filldraw (8,17) circle (0.2cm);
\filldraw (8,19) circle (0.2cm);
\filldraw (8,21) circle (0.2cm);
\filldraw (8,23) circle (0.2cm);
\filldraw (8,25) circle (0.2cm);
\filldraw (10,3) circle (0.2cm);
\filldraw (10,5) circle (0.2cm);
\filldraw (10,7) circle (0.2cm);
\filldraw (10,9) circle (0.2cm);
\filldraw (10,11) circle (0.2cm);
\filldraw (10,13) circle (0.2cm);
\filldraw (10,15) circle (0.2cm);
\filldraw (10,17) circle (0.2cm);
\filldraw (10,19) circle (0.2cm);
\filldraw (10,21) circle (0.2cm);
\filldraw (10,23) circle (0.2cm);
\filldraw (10,25) circle (0.2cm);
\filldraw (12,3) circle (0.2cm);
\filldraw (12,5) circle (0.2cm);
\filldraw (12,7) circle (0.2cm);
\filldraw (12,9) circle (0.2cm);
\filldraw (12,11) circle (0.2cm);
\filldraw (12,13) circle (0.2cm);
\filldraw (12,15) circle (0.2cm);
\filldraw (12,17) circle (0.2cm);
\filldraw (12,19) circle (0.2cm);
\filldraw (12,21) circle (0.2cm);
\filldraw (12,23) circle (0.2cm);
\filldraw (12,25) circle (0.2cm);
\filldraw (14,3) circle (0.2cm);
\filldraw (14,5) circle (0.2cm);
\filldraw (14,7) circle (0.2cm);
\filldraw (14,9) circle (0.2cm);
\filldraw (14,11) circle (0.2cm);
\filldraw (14,13) circle (0.2cm);
\filldraw (14,15) circle (0.2cm);
\filldraw (14,17) circle (0.2cm);
\filldraw (14,19) circle (0.2cm);
\filldraw (14,21) circle (0.2cm);
\filldraw (14,23) circle (0.2cm);
\filldraw (14,25) circle (0.2cm);
\filldraw (16,3) circle (0.2cm);
\filldraw (16,5) circle (0.2cm);
\filldraw (16,7) circle (0.2cm);
\filldraw (16,9) circle (0.2cm);
\filldraw (16,11) circle (0.2cm);
\filldraw (16,13) circle (0.2cm);
\filldraw (16,15) circle (0.2cm);
\filldraw (16,17) circle (0.2cm);
\filldraw (16,19) circle (0.2cm);
\filldraw (16,21) circle (0.2cm);
\filldraw (16,23) circle (0.2cm);
\filldraw (16,25) circle (0.2cm);
\filldraw (18,3) circle (0.2cm);
\filldraw (18,5) circle (0.2cm);
\filldraw (18,7) circle (0.2cm);
\filldraw (18,9) circle (0.2cm);
\filldraw (18,11) circle (0.2cm);
\filldraw (18,13) circle (0.2cm);
\filldraw (18,15) circle (0.2cm);
\filldraw (18,17) circle (0.2cm);
\filldraw (18,19) circle (0.2cm);
\filldraw (18,21) circle (0.2cm);
\filldraw (18,23) circle (0.2cm);
\filldraw (18,25) circle (0.2cm);
\filldraw (20,3) circle (0.2cm);
\filldraw (20,5) circle (0.2cm);
\filldraw (20,7) circle (0.2cm);
\filldraw (20,9) circle (0.2cm);
\filldraw (20,11) circle (0.2cm);
\filldraw (20,13) circle (0.2cm);
\filldraw (20,15) circle (0.2cm);
\filldraw (20,17) circle (0.2cm);
\filldraw (20,19) circle (0.2cm);
\filldraw (20,21) circle (0.2cm);
\filldraw (20,23) circle (0.2cm);
\filldraw (20,25) circle (0.2cm);
\filldraw (22,3) circle (0.2cm);
\filldraw (22,5) circle (0.2cm);
\filldraw (22,7) circle (0.2cm);
\filldraw (22,9) circle (0.2cm);
\filldraw (22,11) circle (0.2cm);
\filldraw (22,13) circle (0.2cm);
\filldraw (22,15) circle (0.2cm);
\filldraw (22,17) circle (0.2cm);
\filldraw (22,19) circle (0.2cm);
\filldraw (22,21) circle (0.2cm);
\filldraw (22,23) circle (0.2cm);
\filldraw (22,25) circle (0.2cm);
\filldraw (24,3) circle (0.2cm);
\filldraw (24,5) circle (0.2cm);
\filldraw (24,7) circle (0.2cm);
\filldraw (24,9) circle (0.2cm);
\filldraw (24,11) circle (0.2cm);
\filldraw (24,13) circle (0.2cm);
\filldraw (24,15) circle (0.2cm);
\filldraw (24,17) circle (0.2cm);
\filldraw (24,19) circle (0.2cm);
\filldraw (24,21) circle (0.2cm);
\filldraw (24,23) circle (0.2cm);
\filldraw (24,25) circle (0.2cm);
\filldraw (26,3) circle (0.2cm);
\filldraw (26,5) circle (0.2cm);
\filldraw (26,7) circle (0.2cm);
\filldraw (26,9) circle (0.2cm);
\filldraw (26,11) circle (0.2cm);
\filldraw (26,13) circle (0.2cm);
\filldraw (26,15) circle (0.2cm);
\filldraw (26,17) circle (0.2cm);
\filldraw (26,19) circle (0.2cm);
\filldraw (26,21) circle (0.2cm);
\filldraw (26,23) circle (0.2cm);
\filldraw (26,25) circle (0.2cm);

\end{tikzpicture}

\end{tikzpicture}
\captionsetup{width = 0.95\textwidth}
\caption{The set arising from $\left\{(1,0), (0,1), (2,3)\right\}$.}
\label{fig:twodlattice}
\end{figure}
\end{center}
\vspace{-20pt}

\subsubsection{$\left\{(3,0), (0,1), (1,1)\right\}$}  This sequence splits $\mathbb{R}^2_{\geq 0}$ in a stable way into two regions  with different lattice behavior. We did not attempt
to obtain a complete proof because, at this point, it seems to require an enormous amount of casework.  However, we do not see a fundamental obstruction to obtaining a proof using the methods employed above.  (Perhaps unsurprisingly, this sort of statement is fairly easy to prove via induction once the correct induction hypothesis is found). (Note added in revision: some of these types of behavior have now been
rigorously classified by Hinman, Kuca, Schlesinger \& Sheydvasser \cite{hinman}.)
\begin{center}
\begin{figure}[h!]
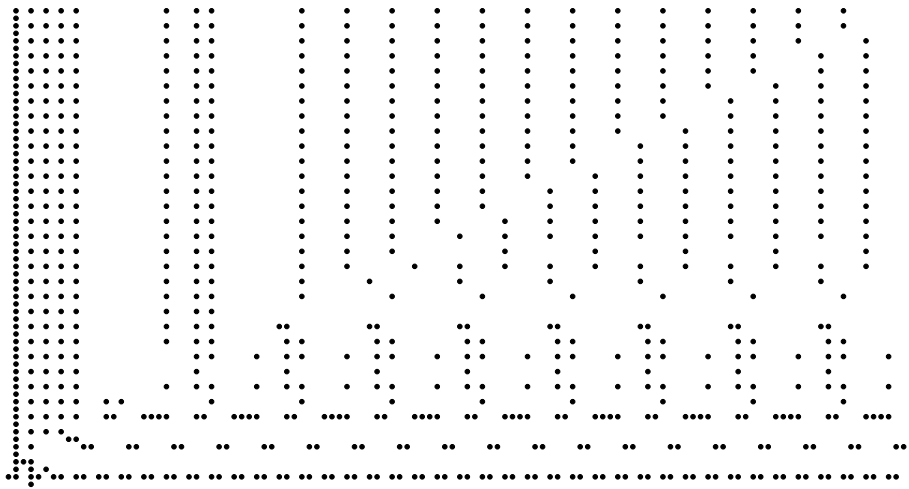


\captionsetup{width = 0.95\textwidth}
\caption{The set arising from $\left\{(3,0), (0,1), (1,1)\right\}$ splits the domain into two lattices.}
\label{fig:twodlattice}
\end{figure}
\end{center}
\vspace{-10pt}

\subsection{Linear Transformations}
There is a useful invariance under certain linear transformations which we actively exploit in the study of three initial vectors $\left\{v_1, v_2, v_3\right\} \subset \mathbb{R}_{\geq 0}^2$.
\begin{lemma} Let $\left\{ v_1, \dots, v_k\right\} \subset \mathbb{R}^2_{ \geq 0}$ span $\mathbb{R}^2$. Then there exists an invertible linear transformation $T:\mathbb{R}^2 \rightarrow \mathbb{R}^2$ that maps $\left\{ v_1, \dots, v_k\right\} \subset \mathbb{R}^2_{ \geq 0}$ to another set in $\mathbb{R}^2_{\geq 0}$ such that at least one of the transformed vectors lies on the $x-$axis and at least one on the $y-$axis.  Moreover, the Ulam sets arising from the original and transformed sets of initial vectors are structurally equivalent (in the obvious sense).
\end{lemma}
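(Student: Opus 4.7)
The plan is to exploit extremality: since the $v_l$ all lie in the closed nonnegative quadrant and span $\mathbb{R}^2$, each has a well-defined angle $\theta_l \in [0,\pi/2]$ with the positive $x$-axis. Let $v_i$ achieve the minimum angle and $v_j$ the maximum; the spanning hypothesis forces $\theta_i<\theta_j$, so $v_i$ and $v_j$ are linearly independent. Every remaining $v_l$ satisfies $\theta_i\le\theta_l\le\theta_j$, which puts it inside the closed convex cone $\{\alpha v_i+\beta v_j:\alpha,\beta\ge 0\}$; in particular each $v_l$ admits a unique representation $v_l=\alpha_l v_i+\beta_l v_j$ with $\alpha_l,\beta_l\ge 0$.

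Now define $T:\mathbb{R}^2\to\mathbb{R}^2$ to be the unique linear map with $Tv_i=\|v_i\|\,e_1$ and $Tv_j=\|v_j\|\,e_2$. Since it sends a basis to a basis, $T$ is invertible. Applying $T$ to the general vector gives $Tv_l=(\alpha_l\|v_i\|,\beta_l\|v_j\|)\in\mathbb{R}^2_{\ge 0}$, so the transformed set lies in the nonnegative quadrant and visibly contains a point on each coordinate axis (namely $Tv_i$ and $Tv_j$). This dispatches the geometric half of the lemma.

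For the structural equivalence, linearity of $T$ gives a bijection between decompositions: $w=w_1+w_2$ in the original setting corresponds precisely to $Tw=Tw_1+Tw_2$ in the transformed setting, so uniqueness of representation is preserved step for step. The only subtlety is that the greedy construction depends on a notion of size, and $T$ need not preserve Euclidean norm. The cleanest way to handle this is to equip the target copy of $\mathbb{R}^2$ with the pullback norm $\|w\|':=\|T^{-1}w\|$; this is a genuine norm because $T^{-1}$ is a linear isomorphism, and by construction $\|Tv\|'=\|v\|$ for every $v$. Running the Ulam construction on $\{Tv_l\}$ with respect to $\|\cdot\|'$ then produces exactly the image under $T$ of the Ulam set of $\{v_l\}$ with respect to $\|\cdot\|$, which is the ``obvious sense'' of structural equivalence. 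One can also invoke the universality results of \S 3, which show that the Ulam set is largely insensitive to the particular notion of size.

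The main potential obstacle is precisely this last point: the definition of Ulam set singles out Euclidean norm, so one must either pass to a pullback norm as above or explicitly appeal to the norm-invariance machinery of \S 3. The selection of $v_i,v_j$ via extremal angles is the conceptual step; the rest is linear algebra and bookkeeping.
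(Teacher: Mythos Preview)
Your proof is correct. The paper takes a slightly different route: rather than defining a single basis-change map, it composes two simpler transformations---first a rotation to bring the smallest-slope vector onto the $x$-axis (rotations preserve Euclidean norm, so structural equivalence there is immediate), and then a shear $(x,y)\mapsto(x-cy,y)$ to send the largest-slope vector onto the $y$-axis. Your approach handles both placements at once with a single linear map but must then confront the norm-change issue explicitly, which you do cleanly via the pullback norm (or by forward reference to \S 3). The paper's decomposition buys automatic invariance for the rotation step but still tacitly needs norm independence for the shear; in that sense your treatment of the size function is arguably more honest. Conceptually both arguments hinge on the same extremal observation---select the smallest- and largest-angle vectors---and differ only in how the resulting linear map is packaged.
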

\begin{proof} The proof is fairly simple: it is easy to see that sets are invariant under small rotations that keep all the vectors in the positive quadrant $\mathbb{R}^2_{\geq 0}$. 
This allows us to map the
vector(s) with the smallest slope to the $x-$axis. Finally, it is also easy to see that Ulam sets of this type are invariant under shear transformations of the form
$$ S:(x,y) \rightarrow (x - cy, y),$$
and the result follows from composition.
\end{proof}


This property is quite useful when studying the case of three initial vectors in $\mathbb{R}^2_{\geq 0}$.  Invariance under certain types of linear
transformation easily generalizes to higher dimensions and should be a valuable symmetry in the systematic investigation of these sets.  We also remark that if all the initial vectors are contained in $\mathbb{Z}_{\geq 0}^2$, then the transformed vectors will all be in $\mathbb{Q}^2_{\geq 0}$, and scaling them yields another set in $\mathbb{Z}_{\geq 0}^2$.  We give a more formal and thorough treatment of structural equivalence in \S3.

\subsection{Unit Vectors in Three Dimensions}
A natural topic of inquiry is the case of the three canonical unit vectors in $\mathbb{R}^3$ because their behavior is the universal behavior for three vectors $\left\{v_1, v_2, v_3\right\}$ that are linearly independent over $\mathbb{Z}$ (see \S 3.2).
\begin{figure}[h!]
\begin{minipage}[b]{0.48\linewidth}
\centering
\includegraphics[width=0.8\textwidth]{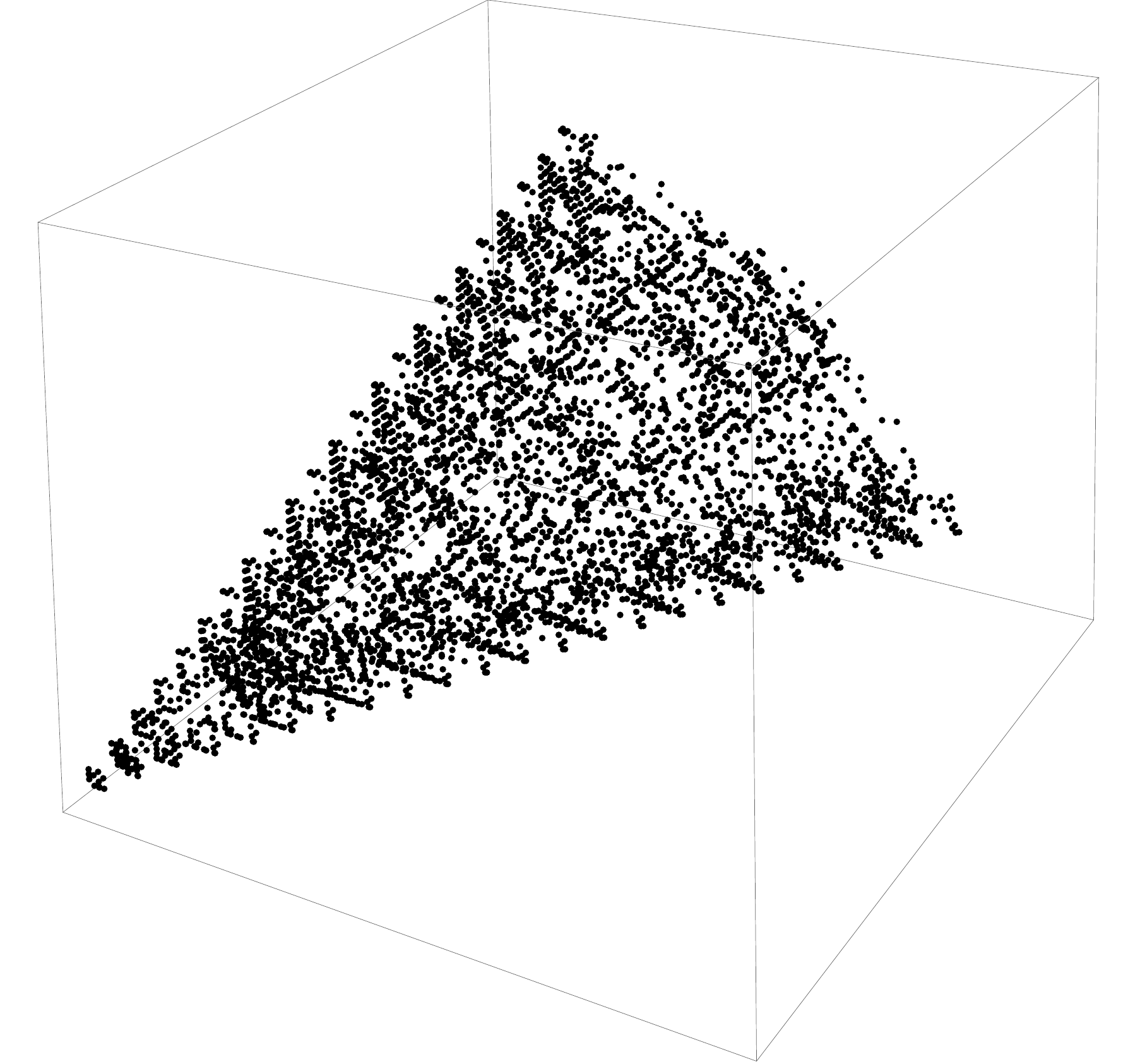}
\end{minipage}
\begin{minipage}[b]{0.48\linewidth}
\centering
\includegraphics[width=0.8\textwidth]{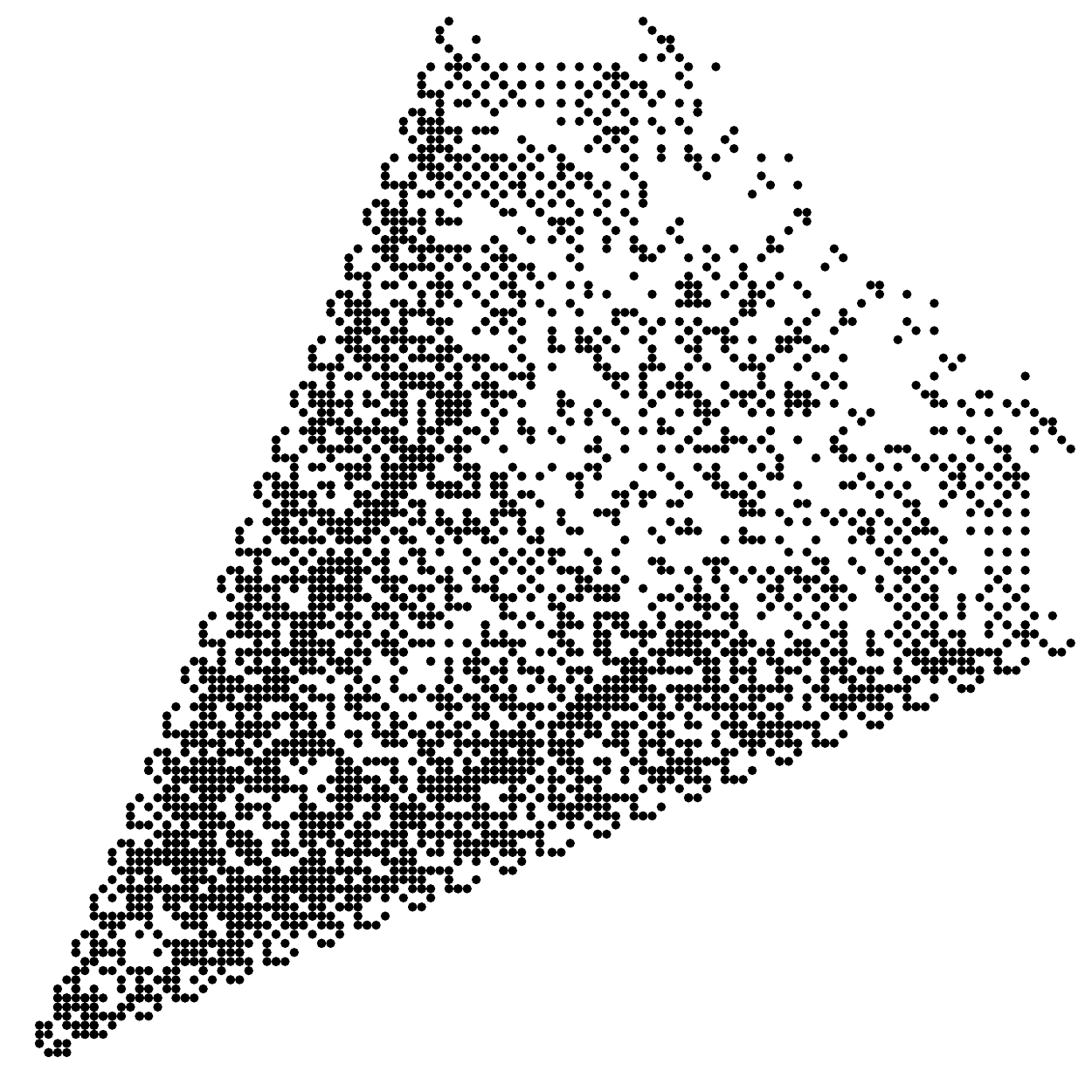}
\end{minipage}
\captionsetup{width = 0.95\textwidth}
\caption{The first few points with all coordinates larger than 2 in the Ulam set generated by the unit vectors of $\mathbb{R}^3$ (left) and a projection of the same points onto the $xy-$plane (right).}
\label{fig:block}
\end{figure}
Computation using the initial set $\{(1,0,0), (0,1,0), (0,0,1)\}$ reveals this structure to be highly nontrivial.  The coordinate planes naturally contain 2-dimensional lattices generated pairwise by the initial vectors characterized above. There is a second type of regular structure appearing in the hyperplane $\left\{(x,y,z) \in \mathbb{R}^3: x = 2\right\}$ (and, by symmetry, the other two hyperplanes) because it contains the points $(2,0,1), (2,1,0)$, and $(2,2m+3, 2n+3)$ for $m,n \in \mathbb{N}$. However, beyond these two planes (or, a total of six hyperplanes with symmetry), there
is a secondary structure unfolding in the interior that seems to have a roughly hexagonal shape and to be centered in the direction $(1,1,1)$ (as is to be expected because of symmetry under permutation of the coordinates).
Simple numerical experiments reveal that the point $(4,6,10)$ (along with the five other points arising from permutation) seems to make the largest angle with the vector $(1,1,1)$; the
second largest angle comes from $(94, 136, 230)$. Our only rigorous result (besides the behavior
of the hyperplanes and the obvious hexagonal symmetry) is that $(n,n,n)$ is never in the set.

\begin{proposition} The Ulam set arising from the canonical basis vectors in $\mathbb{R}^3$ does not contain any vector of the form $(n,n,n) \in \mathbb{N}^3$.
\end{proposition}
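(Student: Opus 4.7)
The plan hinges on two symmetry observations: the Ulam set $U$ generated by $\{e_1,e_2,e_3\}$ is invariant under the natural action of $S_3$ by coordinate permutation, and the target vector $(n,n,n)$ is itself $S_3$-invariant. Combining these, uniqueness of representation will sharply restrict the shape of any summand, and a minimality argument will finish the proof.

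First I would establish $S_3$-invariance of $U$ by induction on the order in which elements are added. The initial set $\{e_1,e_2,e_3\}$ is $S_3$-invariant; the Euclidean norm and vector addition are permutation-equivariant; and the operation ``add all minimal-norm vectors with a unique representation'' is therefore equivariant as well. So at every stage the set built so far remains $S_3$-invariant.

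Next, suppose for contradiction that $U$ contains some vector of the form $(n,n,n)$, and take the smallest such $n$. Uniqueness of representation gives $(n,n,n)=v+w$ with distinct $v,w\in U$ appearing strictly earlier. For every $\sigma\in S_3$, applying $\sigma$ to both sides yields another valid representation $(n,n,n)=v_\sigma+w_\sigma$ with $v_\sigma,w_\sigma\in U$, so uniqueness forces $\{v_\sigma,w_\sigma\}=\{v,w\}$ for all $\sigma$. Hence the $S_3$-orbit of $v$ is contained in the two-element set $\{v,w\}$. A quick check of the subgroup lattice of $S_3$ rules out orbit size $2$: this would require $\mathrm{Stab}(v)=A_3$, but invariance under the $3$-cycle already forces $v_1=v_2=v_3$, whence $\mathrm{Stab}(v)=S_3$ after all. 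So the orbit of $v$ is a single point, giving $v=(a,a,a)$ and $w=(b,b,b)$ with $a+b=n$ and $a\ne b$.

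To close, note that $U$ excludes the zero vector, so $a,b\ge 1$ and thus $a,b<n$. Then $(a,a,a)$ and $(b,b,b)$ are diagonal elements of $U$ with parameter strictly less than $n$, contradicting the minimality of $n$. The only step with a real pitfall is the $S_3$-invariance of $U$: one must be sure that when several minimal-norm vectors are added simultaneously, the ``any order'' convention from the introduction does not secretly break symmetry. Since the tied vectors are added as an unordered batch and the initial data, norm, and addition are all permutation-equivariant, this is straightforward, but it is the one place where care is needed.
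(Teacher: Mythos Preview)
Your proof is correct and follows essentially the same approach as the paper: take a minimal diagonal element, write it as a sum of two earlier elements, and use the $S_3$-symmetry of the set to derive a contradiction. The paper's version is terser---it argues directly that since the summands cannot both be diagonal (by minimality), some permutation produces a second representation---whereas you reach the same contradiction by first forcing both summands to be diagonal via the orbit-size analysis; your explicit discussion of why the orbit cannot have size $2$ fills in a detail the paper leaves implicit.
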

\begin{proof} Suppose the statement is false and $(n,n,n) \in \mathbb{N}^3$ is the smallest such element in the set. (Observe that $(0,0,0)$ is not included in the set.) Then $(n,n,n) = (a_1, a_2, a_3) + (b_1,b_2,b_3)$
uniquely for some elements $(a_1, a_2, a_3)\neq (b_1,b_2,b_3)$ in the set. It is clear that not all entries of these elements can be identical because $(n,n,n)$ is the smallest
element in the set with that property. Then, however, at least one of the six possible permutations of the coordinates yields a second representation, and the resulting non-uniqueness  implies
the result.
\end{proof}

\begin{center}
\begin{figure}[h!]
\includegraphics[width=0.45\textwidth]{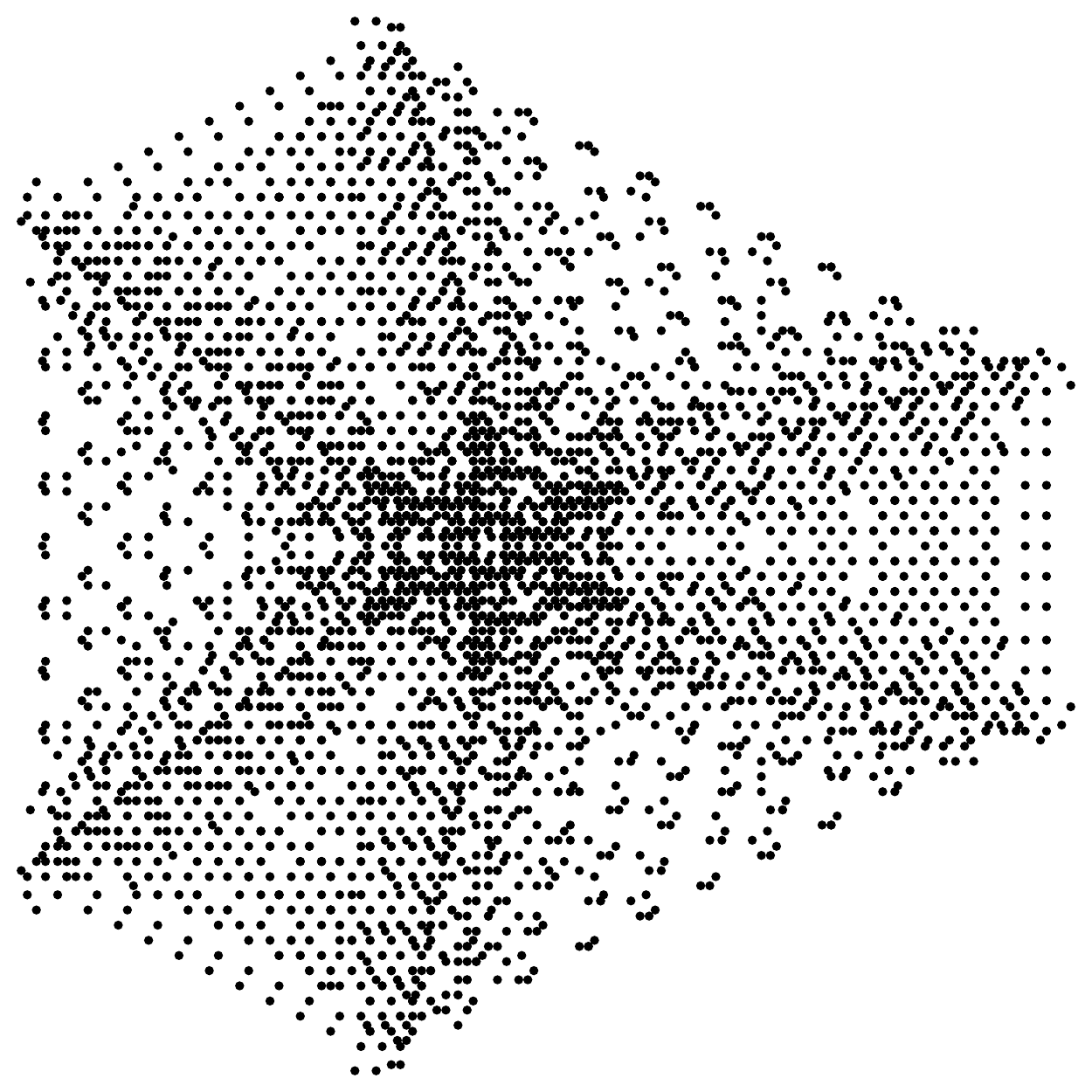}
\captionsetup{width = 0.95\textwidth}
\caption{Projection of the set onto the orthogonal complement of $(1,1,1)$.}
\label{fig:twodlattice}
\end{figure}
\end{center}
\vspace{-20pt}

\section{Independence of Norms and Universal Embeddings}

\subsection{Independence of Norms.} This section establishes a simple result that we believe to be fairly fundamental: Ulam sets depend strongly on initial
conditions but much less so on the notion of size that is used.  Note that the theorems throughout this section are not restricted to sets of initial elements in $\mathbb{R}^n$.  For a given set of initial elements $V=\{v_1, v_2, \dots v_k\}$, the the arising Ulam set $A$ is fully contained in the `domain' given by $$D_V=\{a_1v_1+a_2v_2+ \dots + a_k v_k: a_i \in \mathbb{N} \land \max(a_1, a_2, \dots, a_k) > 0\}$$ where some elements may have multiple expressions.  Our definition of Ulam sets uses the Pythagorean $\ell^2-$distance to determine the `smallest' element, but the arising sets are actually
independent of this particular notion of size.  More precisely, if the set of initial elements is $V=\{v_1, v_2, \dots, v_k\}$, then any function
$$f:D_V \rightarrow \mathbb{R}$$
satisfying
$$f(u+v)>\max{(f(u), f(v))} \qquad \text{for all} ~u,v\in D_V$$ such that $(-\infty, x)$ has a finite preimage for all $x \in \mathbb{R}$
will give rise to the same universal set independent of the particular function $f$ used. This property guarantees that it does not matter in what order we add new elements that are tied for smallest length. For example, all $\ell_p-$norms with $1 \leq p < \infty$ (including the Euclidean norm $\ell_2$) have this property,
as do all functions $f:\mathbb{R}_{\geq 0}^n \rightarrow \mathbb{R}$ that are strictly monotonically increasing and unbounded in each coordinate. For a given set of initial conditions $V=\left\{v_1, \dots, v_k\right\}$ and any admissible $f-$function, one can define the
$f-$Ulam set as the set obtained by adding, in a greedy manner, the smallest elements according to $f-$value that are uniquely representable as sums of two distinct earlier terms.
\begin{theorem}[Independence of Norms] $f-$Ulam sets are independent of the function $f$.
\end{theorem}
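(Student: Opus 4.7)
The plan is to characterize the Ulam set in a way that depends only on $V$ and the set-theoretic structure, with no reference to the ordering function $f$. Once this is done, applying this characterization to two admissible $f,g$ immediately yields $A_f=A_g$.

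The key tool is the strict partial order $\prec$ on $D_V$ defined by $u\prec w$ iff $w-u\in D_V$. This order is well-founded because the sum-of-coordinates of any element of $D_V$ is strictly positive (each initial vector lies in $\mathbb{R}^n_{\geq 0}$ and is nonzero), hence the sum-of-coordinates is strictly increasing along $\prec$. The growth hypothesis $f(u+v)>\max(f(u),f(v))$ shows that every admissible $f$ is strictly increasing along $\prec$ as well: if $u\prec w$, write $w=u+s$ for some $s\in D_V$ and apply the growth condition.

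Using well-founded recursion on $\prec$, I define the intrinsic set $A_{\ast}\subseteq D_V$ by the clause: $w\in A_{\ast}$ iff $w\in V$, or $w$ admits exactly one unordered decomposition $w=u+v$ with $u\neq v$ and $u,v\in A_{\ast}$. Any such decomposition automatically satisfies $u,v\prec w$, so the clause refers only to strictly smaller elements and the recursion is well defined. Crucially, $A_{\ast}$ depends only on $V$ and admits no appeal to any $f$.

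The final step is to verify $A_f=A_{\ast}$ for every admissible $f$. I enumerate $D_V$ in a linear extension of the $f$-ordering (ties broken arbitrarily) and induct along this enumeration. The initial vectors lie in both sets by definition. For a non-initial $w$, every potential summand $u$ in a decomposition $w=u+v$ satisfies $f(u)<f(w)$ by the growth condition and so has been processed before $w$; by the inductive hypothesis, its status in $A_f$ matches its status in $A_{\ast}$. Hence the greedy $f$-membership test for $w$ consults exactly the same input data as the intrinsic characterization, and the two verdicts agree. Applied to any two admissible $f$ and $g$, this yields $A_f=A_{\ast}=A_g$.

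The main obstacle is careful treatment of ties in the $f$-ordering. Two distinct elements $w\neq w'$ with $f(w)=f(w')$ can never appear as summands of one another, by the strict growth condition, and the hypothesis that each preimage $f^{-1}((-\infty,x))$ is finite ensures the enumeration is well defined. These two facts let one process tied elements in any order without altering the outcome — exactly the justification alluded to in the paper's remark preceding the theorem — and make the induction run cleanly.
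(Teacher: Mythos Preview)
Your proof is correct and takes essentially the same approach as the paper: both rest on the observation that membership of $w$ in the Ulam set depends only on the status of its potential summands, all of which are $\prec$-smaller than $w$ (equivalently, $f$-smaller for any admissible $f$), so the decision at $w$ is norm-independent. The paper phrases this as a minimal-counterexample contradiction, while you package it as a well-founded recursion defining an intrinsic set $A_{\ast}$; these are the same argument in different dress.
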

\begin{proof} The proof is by contradiction. Suppose that for a given set of initial elements $V=\{v_1, v_2, \dots v_k\}$, there are two functions $f_1, f_2$ giving rise to different Ulam sets, and suppose without loss of generality that $x \in D_V$ is a
$f_1-$smallest element that is contained in the first set but not the second set. Since the initial conditions and subsequently added elements are all contained in
$D_V$, whether the element $x$ is added to a set depends only on the elements
$$ \{a_1v_1+a_2v_2+\dots +a_kv_k\in D_V: x=b_1v_1+b_2v_2+\dots +b_kv_k ~\text{for some} ~b_i \in \mathbb{N}, b_i\geq a_i \forall 1\leq i \leq k\}.$$
(There could be many such expansions for $x$; we consider all elements $a_1v_1+a_2v_2+\dots +a_kv_k\in D_V$ satisfying this relation for at least one expansion of $x$.)  By assumption, this set coincides for $f_1$ and $f_2$, which yields a contradiction.
\end{proof}
This theorem shows the Ulam sets to depend on the underlying algebraic structure of addition but only very
weakly on the ordering by a notion of size. We remark that, in practice, the order in which elements are added to the set depends rather strongly on the $f-$function used, but the resulting (unordered) sets are ultimately the same.

\subsection{Universal Embeddings.} Suppose sets of initial conditions $U=\{u_1, u_2, \dots u_k\}$ and $V=\{v_1, v_2, \dots v_k\}$  give rise to Ulam sets $A \subset D_U$ and $B \subset D_V$, respectively.  Then we say that $A$ and $B$ are structurally equivalent (or isomorphic) as Ulam sets  if for all $a_i\in \mathbb{N}$,
$$a_1 u_1 + a_2 u_2 + \dots a_k u_k \in A \quad \text{iff} \quad a_1 v_1 + a_2 v_2 + \dots + a_k v_k \in B.$$

The next statement deals with a large number of initial cases and shows that the dynamics for `generic' initial conditions is unique. Here, `generic'
refers to the fact that for `most' (i.e. in the sense of Lebesgue measure) sets of initial vectors $\left\{v_1, \dots, v_k\right\} \subset  \mathbb{R}_{\geq 0}^n$, the equation
$$ a_1 v_1 + a_2 v_2 + \dots + a_k v_k =0 \qquad \mbox{has no solution}~(a_1, \dots, a_k) \in \mathbb{Z}^k \setminus 0.$$
In this case, things drastically simplify because each possible element of the set has a unique representation in $\{v_i\}$, which in turn implies universal behavior.

\begin{lemma}
 \label{indep}
 Let $V=\left\{v_1, \dots, v_k\right\}$ be a set of initial conditions such that
$$ a_1 v_1 + a_2 v_2 + \dots a_k v_k = 0 \qquad \mbox{has no solution}~(a_1, \dots, a_k) \in \mathbb{Z}^k \setminus 0.$$
Then the arising Ulam set $A$ is structurally equivalent to the Ulam set $E$ arising from the set
$ \left\{e_1, \dots, e_k\right\} \subset \mathbb{R}^k$
(where $e_i = (0,0,\dots, 0,1,0,\dots,0)$ is the $i-$th canonical basis vector in $\mathbb{R}^k$).
\end{lemma}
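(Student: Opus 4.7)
The plan is to transfer the Ulam construction from $V$ to $E$ via a bijection of domains, then invoke Theorem 2 (Independence of Norms) to handle the fact that the two Ulam sets are built using different notions of size on ambient spaces of different dimensions.

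First, I would use the hypothesis of $\mathbb{Z}$-independence to define a bijection between domains. Since $a_1v_1+\cdots+a_kv_k=0$ has no nontrivial integer solution, every element of
$$D_V=\left\{a_1 v_1+\cdots+a_k v_k : a_i\in\mathbb{N},\ \max_i a_i>0\right\}$$
has a \emph{unique} expansion in the $v_i$. Hence the map
$$\phi\!\left(\textstyle\sum_i a_i v_i\right)=\sum_i a_i e_i\in D_E\subset \mathbb{R}^k$$
is a well-defined bijection $\phi:D_V\to D_E$ that respects addition: $\phi(u+w)=\phi(u)+\phi(w)$ for all $u,w\in D_V$.

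Second, I would introduce a tailored size function on $D_V$ by pulling back Euclidean norm through $\phi$: set $f(w):=\|\phi(w)\|_2$. Because $\phi$ is additive and its image sits in $\mathbb{N}^k\setminus\{0\}$, strict monotonicity $f(u+w)>\max(f(u),f(w))$ follows, and the finite-preimage condition for $f$ reduces to the finiteness of $\mathbb{N}^k$-lattice points inside any Euclidean ball, which is immediate. Thus $f$ is admissible in the sense of \S 3.1, and by Theorem 2 the $f$-Ulam set generated by $V$ coincides setwise with $A$.

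Third, I would prove by induction on $f$-value (equivalently, on $\|\phi(w)\|_2$) that $\phi$ intertwines the two greedy constructions, i.e. $\phi(A)=E$. At each stage, an element $w$ enters the $f$-Ulam set of $V$ precisely when there is a unique unordered pair $\{u,u'\}$ of distinct previously added elements with $u+u'=w$. Applying $\phi$ bijectively: the pairs $\{u,u'\}$ of smaller elements in $A$ with $u+u'=w$ correspond, via $\phi$, to the pairs $\{\phi(u),\phi(u')\}$ of smaller elements in $\phi(A)$ with $\phi(u)+\phi(u')=\phi(w)$. By inductive hypothesis $\phi$ identifies the already-added elements on both sides, so uniqueness transfers and $w\in A\iff \phi(w)\in E$, which is exactly structural equivalence.

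The only real subtlety is making certain that $\mathbb{Z}$-independence is strong enough for $\phi$ to be a bijection on $D_V$ (rather than merely well-defined as a surjection) and that the pulled-back $f$ satisfies the hypotheses of Theorem 2; once these two points are in place, the inductive transfer is essentially bookkeeping. The appeal to Theorem 2 is essential, since otherwise one would have to compare the $\ell^2$-norm on $\mathbb{R}^n$ with the $\ell^2$-norm on $\mathbb{R}^k$, and these orderings can differ on $D_V$.
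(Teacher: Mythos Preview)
Your proposal is correct and follows essentially the same route as the paper: use $\mathbb{Z}$-independence to obtain unique coefficient representations, define the $f$-function on $D_V$ as the Euclidean norm of the coefficient vector, verify admissibility, and invoke Theorem~2. The paper's proof is terser (it omits the explicit admissibility check and the inductive transfer you spell out in your third step), but the core idea and structure are identical.
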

\begin{proof}
The argument is a straightforward application of Theorem 2: 
$$ a_1 v_1 + a_2 v_2 + \dots + a_k v_k =b_1 v_1 + \dots + b_k v_k$$
with $a_i, b_i \in \mathbb{N}$ implies that $a_i = b_i$ for all $1\leq i \leq k$ and thus that each potential element of the Ulam set has a unique representation in $\{v_i\}$ over $\mathbb{N}$ as coefficients.
We observe that we can, for all relevant lattice points that could ever be under consideration, define a $f-$function via
$$ f\left(a_1v_1+a_2v_2+\dots + a_kv_k\right) =\sqrt{a_1^2+a_2^2+\dots + a_k^2}.$$  
Theorem 2 implies that $A$ does not depend on the $f-$function, so the fact that this particular choice of $f-$function gives rise to the canonical Ulam set  $E$ concludes the argument.
\end{proof}
This simple statement has quite serious implications. For example, the Ulam set generated by
$\left\{(1,0), (1,\sqrt{2})\right\}$ behaves exactly like the set generated by $\left\{(1,0), (0,1)\right\}$ (which is
structurally fairly simple; see Figure 4). By the same token, the initial sets
$$\left\{(1,0,0), (1,\sqrt{2},0), (1,1,\sqrt{3})\right\} \subset \mathbb{R}_{\geq 0}^{3} \quad \text{and} \quad \{3, \sqrt{5}, 2+\pi \} \subset \mathbb{R}_{\geq 0}$$
both evolve exactly the same way as the initial set 
$$\left\{(1,0,0), (0,1,0), (0,0,1)\right\} \subset \mathbb{R}_{\geq 0}^{3}.$$

The following statement extends Lemma 2 and establishes even more general equivalence classes for Ulam sets (but is stated separately for clarity of exposition).
\begin{lemma}
 \label{indep} Suppose $U=\{u_1,\dots u_k\} $ and $V=\{v_1,\dots, v_k\}$ give rise to Ulam sets $A$ and $B$, respectively, under admissible notions of size $f_u$ and $f_v$.  Then $A$ and $B$ are structurally equivalent if $$a_1u_1+a_2u_2+\dots + a_ku_k=0$$
and
$$ a_1v_1+a_2v_2+\dots + a_kv_k=0$$
have the same set of solutions $(a_1, a_2, \dots a_k) \in \mathbb{Z}^k.$
\end{lemma}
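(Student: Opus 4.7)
The plan is to build a natural bijection $\phi\colon D_U \to D_V$ from the shared lattice of $\mathbb{Z}$-linear relations, verify it is a semigroup homomorphism, transport an admissible size function along $\phi$ using Theorem 2, and then argue by a direct induction.

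\emph{Step 1 (Bijection).} For $x \in D_U$, I select any representation $x = a_1 u_1 + \cdots + a_k u_k$ with $a_i \in \mathbb{N}$ and define $\phi(x) := a_1 v_1 + \cdots + a_k v_k$. Well-definedness is exactly the content of the hypothesis: a second representation $x = \sum b_i u_i$ yields $(a_1 - b_1, \ldots, a_k - b_k) \in \mathbb{Z}^k$ as a linear relation for $U$, which by assumption is also a relation for $V$, forcing $\sum a_i v_i = \sum b_i v_i$. Swapping the roles of $U$ and $V$ produces the inverse, so $\phi$ is a bijection; reading coordinates off directly gives $\phi(x+y) = \phi(x) + \phi(y)$ whenever $x+y \in D_U$, and $\phi(u_i) = v_i$. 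A quick sanity check: if $x \in D_U$ were to map to $0$, then $(a_1,\ldots,a_k)$ would be a nontrivial relation for $V$ and hence for $U$, contradicting $x \neq 0$; so $\phi$ genuinely maps $D_U$ to $D_V$.

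\emph{Step 2 (Transfer of size).} By Theorem 2, the sets $A$ and $B$ do not depend on the particular admissible size functions used. I fix any admissible $f_v \colon D_V \to \mathbb{R}$ and set $f_u := f_v \circ \phi$. Then
$$f_u(x+y) \;=\; f_v\bigl(\phi(x)+\phi(y)\bigr) \;>\; \max\bigl(f_v(\phi(x)),\,f_v(\phi(y))\bigr) \;=\; \max\bigl(f_u(x),\,f_u(y)\bigr),$$
and the finite-preimage condition transfers through the bijection $\phi$, so $f_u$ is admissible. By Theorem 2, using $f_u$ on $U$ (resp.\ $f_v$ on $V$) produces the same set $A$ (resp.\ $B$) as any other admissible choice.

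\emph{Step 3 (Induction).} With these choices $\phi$ is order-preserving, and I enumerate $D_U$ as $x_1, x_2, \ldots$ by non-decreasing $f_u$-value, so that $D_V$ is simultaneously enumerated as $\phi(x_1), \phi(x_2), \ldots$ by non-decreasing $f_v$-value. I then show by induction on $n$ that $x_n \in A$ if and only if $\phi(x_n) \in B$. The base case handles the initial conditions via $\phi(u_i) = v_i$. For a non-initial $x_n$, admissibility forces every decomposition $x_n = y+z$ in $D_U$ to satisfy $f_u(y), f_u(z) < f_u(x_n)$, so such $y,z$ already appear among $x_1,\ldots,x_{n-1}$. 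The homomorphism $\phi$ sets up a bijection between decompositions $(y,z)$ of $x_n$ in $D_U$ and decompositions $(\phi(y),\phi(z))$ of $\phi(x_n)$ in $D_V$, preserving both $y \neq z$ and (by the inductive hypothesis) the memberships $y \in A \Leftrightarrow \phi(y) \in B$. Hence the unique-representation criterion holds for $x_n$ with respect to $A$ exactly when it holds for $\phi(x_n)$ with respect to $B$, closing the induction.

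The main conceptual hurdle is Step 1 — verifying that the formal identification of $\mathbb{N}$-combinations descends to a genuine bijection of the domains $D_U$ and $D_V$, which is precisely where the hypothesis on coinciding relation lattices is used. After that, Theorem 2 provides the flexibility needed to align size functions, and the induction becomes essentially mechanical.
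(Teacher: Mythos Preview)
Your proof is correct and follows essentially the same approach as the paper: both use the hypothesis to identify representations in $D_U$ and $D_V$ and then argue by induction/minimal counterexample that the unique-representation criterion transfers. Your version is more explicit about constructing the bijection $\phi$ and invoking Theorem~2 to align the two size functions (a point the paper's contradiction argument leaves implicit), but the underlying argument is the same.
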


\begin{proof}
We proceed by contradiction.  The above condition guarantees that representations of elements in $D_U$ and $D_V$ are completely equivalent: if an element of $D_U$ has multiple representations $a_1u_1+a_2u_1+\dots +a_ku_k=b_1u_1+b_2u_1+\dots +b_ku_k$ (with natural numbers as coefficients), then the corresponding element of $D_V$ has the same representations given by $a_1v_1+a_2v_1+\dots +a_kv_k=b_1v_1+b_2v_1+\dots +b_kv_k$.  Now suppose (without loss of generality) that there is a smallest element $x_0 \in A$ (measured according to $f_u$) representable by
$$a_1 u_1 + a_2 u_2 + \dots + a_k u_k = x_0 \in A$$
 such that the corresponding element 
$$ a_1 v_1 + a_2v_2 + \dots + a_k v_k = y_0 \notin B$$
and such that $A$ and $B$ agree for all elements that are strictly smaller than $x_0$ with respect to $f_u$. Since $x_0$ is included in $A$, it is the unique sum $x_0 = x_1 + x_2$ of two smaller elements $x_1, x_2 \in A$.  Thus, the corresponding elements $y_1$ and $y_2$ are included in $B$ and sum to $y_0$.  But in order for $y_0$ to be excluded from $B$, it must have a second representation: there exist $y_3, y_4 \in B$ such that $y_3+y_4=y_0$.  But by comparing coefficients, we see that the corresponding elements $x_3, x_4 \in A$ sum to $x_0$, a contradiction.
\end{proof}

In light of this lemma, it makes sense to call $$a_1u_1+a_2u_2+\dots + a_ku_k=0$$ the \textit{characteristic equation} for the initial conditions $\{u_i\}$.

\subsection{More General Objects.} These arguments easily extend to a more general setting. Suppose we have a set $\mathbb{A}$ of elements, a binary
operation $\circ:\mathbb{A} \times  \mathbb{A} \rightarrow \mathbb{A}$, a finite set of initial elements $\left\{a_1, a_2, \dots, a_k\right\}\subseteq \mathbb{A}$, and a function $f:\mathbb{A} \rightarrow \mathbb{R}$ that is acceptable (in the sense of \S3.1).
Then we can define an Ulam set arising from the initial set $\left\{a_1, a_2, \dots, a_k\right\}$ by repeatedly adding, among all elements with
a unique representation $a = a_i \circ a_j$ ($i \neq j$), one with minimal value of $f$.  (If $\circ$ is commutative, then the canonical definition restricts our consideration to sums of unique pairs of elements.) As before, the order in which we break ties is inconsequential because of the constraint on $f$, and the same argument as above implies that the arising set is independent of the function $f$.  \\

\textit{Example 1.} Let $\left\{A_1, \dots, A_k\right\} \subset \mathbb{R}^{n \times n}$ be a set of pairwise commuting $n \times n$ matrices with $\det(A_i) > 1$ for all $1 \leq i \leq k$.  Then define our binary operation as standard matrix multiplication,  and let
$$ f(A) := \det(A)$$
be our notion of size.  
It is easy to see that $$f(AB) = \det(AB) = \det(A)\det(B) > \max(\det(A), \det(B)) = \max(f(A), f(B)).$$ 
Since all of the matrices commute, there exists a change of basis under which they all become upper triangular, and this
property is preserved under multiplication. \\

\textit{Example 2.} Let $\left\{g_1, \dots, g_k\right\} \subset C([0,1], \mathbb{R}_{> 0})$ be a set of continuous functions each enclosing strictly positive area, let the binary operation $\circ$ be given by addition, and set
$$ f(g) := \int_{0}^{1}{g(x) dx}.$$
For instance, the Ulam set arising from $\left\{1, \sin{x}, \cos{x}\right\}$ is isomorphic to the set obtained from $\left\{(1,0,0), (0,1,0), (0,0,1)\right\} \subset \mathbb{R}^3$, and the set arising from $\{ \sin^2x, \cos^2x, 1\}$ is isomorphic to the set arising from $\{ (1,0), (0,1), (1,1)\} \subset \mathbb{R}^2$.

\subsection{Embedding Into the Real Line.} The previous section shows that many general initial conditions can be reduced to universal
dynamical behavior on a lattice, the dimension of which is determined by the initial values.  Perhaps surprisingly, one can also reduce dynamical behavior to the case of one-dimensional Ulam sequences (possibly with real initial conditions). We believe this to be one of the reasons why Ulam sequences with non-integer initial elements or more than two initial elements have never been actively investigated: the underlying dynamics can be
of a higher-dimensional nature.

\begin{lemma} Let $\left\{v_1, \dots, v_k\right\} \subset \mathbb{Z}_{\geq 0}^n$ be a set of nonzero vectors. The arising Ulam set $A$ is isomorphic to a suitable one-dimensional Ulam set.
\end{lemma}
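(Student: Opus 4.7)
The plan is to exhibit an isomorphism via Lemma 3 by constructing a set of positive real numbers $w_1, \dots, w_k \in \mathbb{R}_{>0}$ whose characteristic equation coincides with that of $\{v_1, \dots, v_k\}$. Since both the original and the target set of initial conditions will then generate structurally equivalent Ulam sets, the one-dimensional Ulam set on $\{w_1, \dots, w_k\}$ will be the desired embedding.

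Concretely, I would choose positive reals $\alpha_1, \dots, \alpha_n \in \mathbb{R}_{>0}$ that are linearly independent over $\mathbb{Q}$ (e.g.\ $\alpha_j = \sqrt{p_j}$ with $p_j$ the $j$-th prime), and define
$$w_i \;=\; \sum_{j=1}^n \alpha_j (v_i)_j, \qquad i=1,\dots,k.$$
Each $v_i$ is a nonzero vector with nonnegative integer entries, so $w_i>0$; in particular, the $w_i$ are a legitimate set of initial conditions for a one-dimensional Ulam sequence.

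Next I would verify that the integer relations among the $w_i$ match those among the $v_i$. If $(a_1, \dots, a_k) \in \mathbb{Z}^k$ satisfies $\sum_i a_i w_i = 0$, then
$$\sum_{j=1}^n \alpha_j \Bigl(\sum_{i=1}^k a_i (v_i)_j\Bigr) \;=\; 0,$$
and $\mathbb{Q}$-linear independence of the $\alpha_j$ forces $\sum_i a_i (v_i)_j = 0$ for every coordinate $j$, i.e.\ $\sum_i a_i v_i = 0$. The reverse implication is immediate from $\mathbb{Z}$-linearity. Thus the characteristic equations of $\{v_i\}$ and $\{w_i\}$ have exactly the same integer solutions, and Lemma 3 yields the claimed structural equivalence.

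There is no real obstacle to overcome, only a single point worth emphasizing: the $\mathbb{Q}$-linear independence of the $\alpha_j$ is essential. Without it, a coordinate-weighted sum could fail to be injective on $\mathbb{Z}_{\geq 0}^n$ and introduce spurious relations $\sum a_i w_i = 0$ not present among the $v_i$, which would destroy the application of Lemma 3. Once this ingredient is in place, the proof is a direct appeal to Lemma 3.
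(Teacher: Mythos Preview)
Your proof is correct and essentially matches the paper's approach. The paper also constructs a linear map $\phi(x)=\sum_j x_j\log p_j$ (written as $\log(2^{x_1}3^{x_2}\cdots p_n^{x_n})$), relying on the $\mathbb{Q}$-linear independence of the $\log p_j$ where you use $\sqrt{p_j}$; it phrases the conclusion via $\phi$ being an additive homomorphism and an admissible $f$-function rather than by an explicit appeal to Lemma~3, but the underlying argument is the same.
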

\begin{proof} The proof is constructive. We map each vector to a unique positive real number via
$$ \phi(x) = \phi(x_{1}, x_2, \dots, x_n) := \log{\left( 2^{x_1} 3^{x_2} \dots p_n^{x_n} \right)},$$
where $p_i$ is the $i-$th prime number. We now claim that the set
$$ \left\{  \phi(v_1), \dots, \phi(v_k) \right\} \subset \mathbb{R}_{\geq 0},$$
interpreted as the initial conditions of a one-dimensional Ulam sequence, exhibits the same dynamics. It is clear that
$$ \phi(u) + \phi(v) = \phi(u+v)$$
is equivalent to the additive relationship of the vectors in $\mathbb{N}^n$. It remains only to note that
$\phi$ can be interpreted as a continuous function $\phi:\mathbb{R}^n_{\geq 0} \rightarrow \mathbb{R}$ that is strictly
monotonically increasing and unbounded in each of its coordinates.
\end{proof}

\subsection{Embedding Into the Integer Lattice}
We also prove a converse to the above Lemma 4, namely, that an acceptable set of initial conditions in $\mathbb{R}^m$ is always structurally equivalent to some initial conditions in $\mathbb{Z}_{\geq 0}^l$.  This embedding is useful in two ways: first, it greatly narrows the search space for Ulam sets; and second, it allows us to apply the results of \S 4 which we derive for initial conditions with integer coordinates.

\begin{lemma}
Let $V=\{v_1, v_2, \dots v_k \}\subset \mathbb{R}^m_{\geq 0}$ be  a set of initial conditions where each $v_i$ is nonzero and has all nonnegative components.  Then there exists some set of initial conditions \linebreak $W=\{w_1, w_2, \dots w_k\}\subset \mathbb{N}^l$ with $1 \leq l \leq k$ that gives rise to a structurally equivalent Ulam set.
\end{lemma}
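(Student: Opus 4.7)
The plan is to apply Lemma 3 and reduce to finding an injective group embedding $\Lambda \colon G \hookrightarrow \mathbb{Z}^l$ sending each $v_i$ into $\mathbb{N}^l$, where $G := \mathbb{Z} v_1 + \cdots + \mathbb{Z} v_k \subset \mathbb{R}^m$. Setting $w_i := \Lambda(v_i)$, the injectivity of $\Lambda$ guarantees that the $w_i$ and $v_i$ satisfy exactly the same integer relations, and Lemma 3 then yields the desired structural equivalence of the Ulam sets.

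First I would note that $G$, being a finitely generated subgroup of the torsion-free group $(\mathbb{R}^m, +)$, is free abelian of some rank $l$ with $1 \leq l \leq k$ (the lower bound holds because the $v_i$ are nonzero). I would then consider the cone $C \subset G \otimes_\mathbb{Z} \mathbb{R} \cong \mathbb{R}^l$ generated with nonnegative real coefficients by (the images of) $v_1, \ldots, v_k$. Since these elements $\mathbb{Z}$-generate $G$ they also $\mathbb{R}$-generate $G \otimes \mathbb{R}$, so $C$ is full-dimensional. The crucial geometric step is that $C$ is pointed: any relation $\sum a_i v_i = 0$ in $G \otimes \mathbb{R}$ with $a_i \geq 0$ pushes forward under the natural $\mathbb{R}$-linear map $G \otimes \mathbb{R} \to \mathbb{R}^m$ to a relation $\sum a_i v_i = 0$ in $\mathbb{R}^m$; since $a_i \geq 0$ and $v_i \in \mathbb{R}^m_{\geq 0}$, each term $a_i v_i$ must be the zero vector, and since each $v_i \neq 0$ has a positive coordinate, every $a_i$ vanishes.

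Pointedness and full-dimensionality of $C$ imply that the dual cone $C^\vee \subset G^* \otimes \mathbb{R}$, where $G^* := \mathrm{Hom}(G, \mathbb{Z})$, is also full-dimensional with nonempty interior. By density of $G^* \otimes \mathbb{Q}$ in $G^* \otimes \mathbb{R}$ together with the cone property (multiplying any rational point of $C^\vee$ by a sufficiently large positive integer keeps it inside $C^\vee$ and lands in $G^*$), I would sequentially pick $\lambda_1, \ldots, \lambda_l \in G^* \cap \mathrm{int}(C^\vee)$ so that each $\lambda_j$ avoids the $\mathbb{Q}$-span of its predecessors (a proper subspace of $G^* \otimes \mathbb{R}$). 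This produces $l$ functionals that are $\mathbb{Q}$-linearly independent; the resulting homomorphism $\Lambda := (\lambda_1, \ldots, \lambda_l) \colon G \to \mathbb{Z}^l$ is then injective. Because each $\lambda_j$ lies in the interior of $C^\vee$ and $v_i \in C \setminus \{0\}$, one has $\lambda_j(v_i) > 0$ for all $i, j$, so $\Lambda(v_i)$ has strictly positive integer entries and $w_i := \Lambda(v_i) \in \mathbb{N}^l$.

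The main difficulty, as I see it, is the delicate distinction between real relations inside $G \otimes \mathbb{R}$ and real relations inside the ambient $\mathbb{R}^m$: the natural map $G \otimes \mathbb{R} \to \mathbb{R}^m$ can have nontrivial kernel (for instance, if $v_1 = \sqrt{2}, v_2 = 1 \in \mathbb{R}_{\geq 0}^1$, then $\mathrm{rank}(G) = 2$ but $G \subset \mathbb{R}^1$). The pointedness argument must therefore be routed explicitly through $\mathbb{R}^m$, where the coordinate-wise nonnegativity of the $v_i$ is available, rather than carried out inside the abstract space $G \otimes \mathbb{R}$ alone.
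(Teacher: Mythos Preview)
Your proof is correct and takes a genuinely different route from the paper. Both arguments reduce to Lemma 3 and hence to matching the integer solution sets of the characteristic equations, but they construct $W$ by different means. The paper works concretely: it selects a maximal $\mathbb{Q}$-linearly independent subset $Q=\{q_1,\ldots,q_l\}\subset V$, expresses each $v_i$ in these coordinates to obtain $u_i\in\mathbb{Q}^l$, notes that the sum-of-coordinates functional (pulled back from $\mathbb{R}^m$) is strictly positive on every $u_i$, and then applies an explicit shear $I_l+M\,\mathbf{1}(u'_\perp)^T$ (with $u'_\perp$ a rational perturbation of the normal to that hyperplane and $M$ large) to push every $u_i$ into the open positive orthant before clearing denominators. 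Your approach packages the same geometry abstractly: you pass to $G\otimes\mathbb{R}$, show the cone on the $v_i$ is pointed and full-dimensional, and then choose $l$ independent integer functionals from the interior of the dual cone. The paper's argument is more elementary (pure linear algebra, no cone duality) and yields an explicit transformation; yours is cleaner, isolates the essential obstruction (pointedness) directly, and makes the identification $l=\operatorname{rank}G$ transparent from the start. Your observation that the pointedness check must be routed through the ambient $\mathbb{R}^m$ rather than carried out in $G\otimes\mathbb{R}$ alone is exactly the same insight the paper exploits via its sum-of-coordinates functional $\psi$.
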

\begin{proof}
To begin, define the rational solutions to the characteristic equation of $V$ to be $$\mathcal{S}=\{(a_1, a_2, \dots a_k)\in \mathbb{Q}^k: a_1v_1+a_2v_2+\dots +a_kv_k=0\}.$$  As shown in Lemma 3, $\mathcal{S}$ completely determines the behavior of the Ulam set arising from $V$.  (Because this equation is homogeneous, we may take integer and rational solutions interchangeably.)  Now, let $Q=\{q_1, q_2, \dots q_l\}$ be a minimal subset of $V$ such that every element of $V$ is expressible as a $\mathbb{Q}-$linear combination of the elements of $Q$.  ($1 \leq l \leq k$ is clear from any construction of $Q$.  The cases $|Q|=1$ and $|Q|=k$ are special: the former is equivalent to a one-dimensional Ulam sequence with integer coeffients, and the latter exhibits the universal behavior of Lemma 2.)  By the minimality of $Q$, each $v_i \in V$ can be uniquely written as $$v_i=c_1^iq_1+v_2^iq_2+\dots +c_l^iq_l$$ where the coefficients are rational.  Then the characteristic equation becomes $$a_1(c^1_1 q_1+c^1_2 q_2+\dots +c^1_l q_l)+a_2(c^2_1 q_1+c^2_2 q_2+\dots +c^2_l q_l)+\dots +a_k(c^k_1 q_1+c^k_2 q_2+\dots +c^k_l q_l)=0.$$  Again, the minimality of $Q$ ensures that the $q_i$'s are $\mathbb{Q}-$linearly independent, so we may separate this equation into the system of $l$ simultaneous equations in $k$ variables given by :
$$\begin{bmatrix}
c^1_1&c^2_1& \dots &c^k_1 \\
c^1_2&c^2_2& \dots &c^k_2 \\
\vdots & \vdots & \ddots & \vdots \\
c^1_l&c^2_l& \dots &c^k_l
\end{bmatrix}
\begin{bmatrix}
a_1\\
a_2\\
\vdots \\
a_k
\end{bmatrix}=
\begin{bmatrix}
0\\
0\\
\vdots \\
0
\end{bmatrix}$$
Interpreting the columns of this matrix as the expansions of vectors in $\mathbb{R}^l$ over the standard basis, we see that the set of initial conditions $U=\{u_i=(c_1^i, c_2^i, \dots c_l^i)\}\subset \mathbb{Q}^l$ (for $1 \leq i \leq k$) gives the same rational solutions to the characteristic equation as $V$.   However, the proof is not complete because some elements of $U$ may have negative components.  For each $1 \leq i \leq l$, define $b_i$ to be the sum of the components of $q_i$.  Since each $q_i$ is nonzero and has all nonegative components, each $b_i$ is strictly positive.  We now associate any vector $(d_1, d_2, \dots d_l)\in \mathbb{R}^l$ with a real number via the linear map $$\psi (d_1, d_2, \dots d_l)=d_1b_1+d_2b_2+\dots +d_lb_l.$$  Note that $\psi (u_i)$ equals the sum of the coordinates of $v_i$ and hence is strictly positive.  Thus, the solutions to $$\psi(x_1, x_2, \dots x_l)=x_1b_1+x_2b_2+\dots +x_lb_l=0$$ form a $(l-1)-$dimensional hyperplane bisecting $\mathbb{R}^l$, and all the $u_i$'s lie strictly on the same side of it.  We remark that the vector $u_{\bot}=(b_1, b_2, \dots b_l) \in \mathbb{R}^l$ is orthogonal to the hyperplane and  lies on the `positive side,' i.e., the angle between $u_{\bot}$ and each $u_i$ is strictly smaller than $\frac{\pi}{2}$.  Because the inequality is strict, there is some `wiggle room' around $u_{\bot}$ such that all the angles remain  strictly smaller than $\frac{\pi}{2}$.  More precisely, there exists an open ball around the endpoint of $u_{\bot}$ such that any vector with its endpoint in that ball retains said property.  By the denseness of the rationals, we can find such a $u'_{\bot}$ with all rational components that is `almost' perpendicular to the boundary space.  It follows that the dot product of $u'_{\bot}$ with each $u_i$ is always strictly positive.  Let $$m=\min_{1 \leq i \leq k} u_i \cdot u'_{\bot} \quad \text{and} \quad \Omega=\min_{\substack{1 \leq i \leq k \\ 1 \leq j \leq l}} c^i_j$$ where $m>0$ is guaranteed but $\Omega$ may be negative.  Then there exists a positive integer $M$ such that $\Omega +Mm>0$.  We now create a new set $Y=\{y_i\}\subset \mathbb{R}^l$ defined by
$$y_i=u_i+M(u_i \cdot u'_{\bot})(1,1,\dots 1)=(I_l+M\textbf{1}(u'_{\bot})^{T})u_i$$
where $I_l$ is the $(l \times l)$ identity matrix and $\textbf{1}=(1,1,\dots 1)$ is a column vector.  By construction, each $y_i$ has all components strictly positive.  Moreover, since $u_i \cdot u'_{\bot}$ is rational, each $\{y_i\}$ has rational components.  Scaling the entire set by the least common denominator gives a set \linebreak $W=\{w_i\} \subset \mathbb{Z}_{\geq 0}^l$.  We now need note only that the transformation from $u_i$ to $w_i$ is an invertible linear transformation, which means that it preserves $\mathcal{S}$.  (The transformation matrix is singular only if $-\frac{1}{M}$ is an eigenvalue of the matrix $\textbf{1}(u'_{\bot})^{T}$.  But this matrix has only finitely many eigenvalues, so if the smallest integer $M$ makes the transformation noninvertible, then a larger value of $M$ can be chosen.)  This completes the proof.
\end{proof}

\section{The Column Phenomenon}
This section is devoted to a curious phenomenon that we first observed in the set arising from $\{(1,0), (2,0), (0,1) \}$ (the classical Ulam sequence on the $x-$axis augmented by a vector in the orthogonal direction).
The picture (Figure \ref{fig:ulammed}) is rather stunning: seemingly chaotic behavior close to the $x-$axis and periodic structures evolving in the direction of the $y-$axis. The list of $x-$coordinates for which nonempty columns arise is given by
$$1, 4, 6, 9, 14, 20, 23, 25, 30, 33, 49, 56, 60, 248,  270, 280, 302, 385, 474, 479, \dots$$
At this point, we do not understand whether and how this sequence evolves further.
This example naturally leads to defining a column as, loosely, a structure that periodically extends to infinity in one direction.  The purpose of this section is to prove the existence of such periodic structures.  We begin by considering columns in 2 dimensions extending in the direction of the $y-$ axis. After proving several results about the behavior of these columns, we provide natural generalizations to more complex column behavior.

\begin{center}
\begin{figure}[h!]
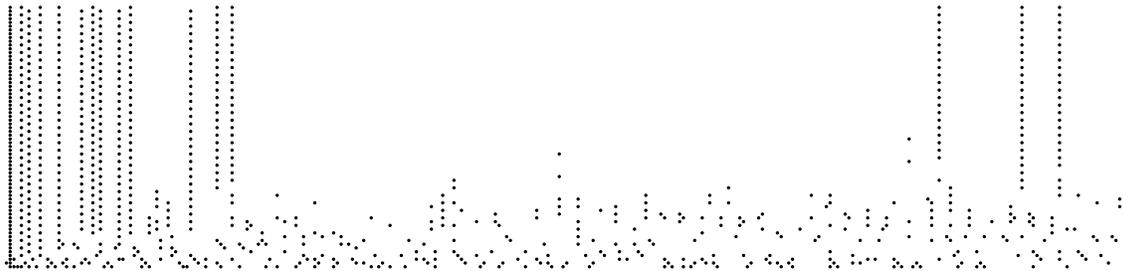


\captionsetup{width = 0.95\textwidth}
\caption{The set arising from $\left\{(1,0), (2,0), (0,1)\right\}$.  We see a gap in nonempty columns between $x=60$ and $x=248$.  Two more nonempty columns follow at $x=270$ and $x=280$.}
\label{fig:ulammed}
\end{figure}
\end{center}
\vspace{-20pt}

\subsection{A Combinatorial Lemma}
Our proof of the existence of columns requires a simple combinatorial fact that we prefer to state independently.  Let $X$ be the set of infinite words over the alphabet $\left\{0,1,2\right\}$ and $Y$ the set of infinite words over $\left\{0,1\right\}$.  We define a transformation $T: X \rightarrow Y$ by setting $$T(x)_0:=
\begin{cases}
1 &\text{if}~ x_0=1\\
0 &\text{otherwise}.
\end{cases}$$
and, for $i \geq 1$,
$$T(x)_i:=
\begin{cases}
1 &\text{if}~ x_i+T(x)_{i-1}=1\\
0 &\text{otherwise}.
\end{cases}$$
We are interested in how this map affects infinite words that are eventually periodic.  For example,
$$110011001\dots \xrightarrow[]{T}  100010001\dots$$ 
is a word with period 4 that is mapped to another word with period 4. By contrast,
$$010101010\dots  \xrightarrow[]{T}   011001100\dots$$ 
 is a word with period 2 mapped to a word with period 4. Our next statement shows this
to be an exhaustive case distinction.

\begin{lemma}[$T$ Preserves Periodicity.] If $x \in X$ is eventually periodic with period $p$ (i.e. $x_{j+p} = x_j$ for all $j$ sufficiently large), then $T(x)$ is eventually periodic with period either $p$ or $2p$. $T(x)$ is periodic with period $2p$ iff the each period in the periodic portion of $x$ contains an odd number of 1's and no 2's. \label{lemma:comb}
\end{lemma}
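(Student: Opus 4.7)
The plan is to view $T$ as a two-state finite transducer and to track the effect of one period of $x$ on the internal state. Writing out the recursion, if the previous output is $T(x)_{i-1}=s$ and the current input is $x_i=a$, then $T(x)_i=f_a(s)$, where $f_0=\mathrm{id}$, $f_1$ is the involution $s\mapsto 1-s$, and $f_2$ is the constant map with value $0$. So $T$ is a deterministic two-state automaton in which $0$'s propagate the state, $1$'s toggle it, and $2$'s reset it to $0$.

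Choose $N$ large enough that $x$ is $p$-periodic on $[N,\infty)$ and let $w=x_N x_{N+1}\cdots x_{N+p-1}$ be one period. Define the composed map
\[ F_w := f_{w_{p-1}}\circ\cdots\circ f_{w_1}\circ f_{w_0}\colon\{0,1\}\to\{0,1\}, \]
so that $T(x)_{N+(k+1)p-1}=F_w\bigl(T(x)_{N+kp-1}\bigr)$ for every $k\geq 0$. Under composition the three generators $f_0,f_1,f_2$ produce only four functions $\{0,1\}\to\{0,1\}$: the identity, the flip, and the two constants. Hence $F_w$ must be of one of these forms, and the entire proof reduces to a three-way case analysis determining which.

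Case~1: $w$ contains some $2$. Let $j$ be the position of the last $2$ in $w$; after applying $f_2=f_{w_j}$, all dependence on the prior state is erased, and the remaining symbols $w_{j+1},\ldots,w_{p-1}$ are applied deterministically from state $0$. Hence $F_w$ is a constant map, so the state at positions $N+kp-1$ is constant for all $k\geq 1$, and $T(x)$ is periodic with period $p$. Case~2: $w$ contains no $2$ and an even number of $1$'s. Then $F_w$ is a composition of identities and an even number of flips, so $F_w=\mathrm{id}$; the state is $p$-periodic and so is $T(x)$. Case~3: $w$ contains no $2$ and an odd number of $1$'s. Then $F_w$ is the flip, so the states at positions $N+kp-1$ alternate between $0$ and $1$. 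Two distinct starting states yield two distinct trajectories under any word composed of identities and flips (the two trajectories are swaps of one another), hence the length-$p$ output block produced in the $k$-th period differs from that produced in the $(k{+}1)$-th period, which forces $T(x)$ to fail to be $p$-periodic while clearly remaining $2p$-periodic.

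Combining the three cases establishes both directions of the claim: $T(x)$ is always either $p$- or $2p$-periodic, and the period is genuinely $2p$ precisely when $w$ has no $2$'s and an odd number of $1$'s. The analysis is essentially bookkeeping once $T$ is recognized as a two-state automaton; the only mildly delicate point is verifying in Case~3 that the period is exactly $2p$ rather than $p$, which reduces to noting that distinct initial states cannot produce identical output blocks when $F_w$ is the flip.
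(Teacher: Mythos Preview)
Your proof is correct. Both your argument and the paper's rest on the same underlying observation---that $T(x)_i$ depends only on $T(x)_{i-1}$ and $x_i$, so the behaviour over one period is governed by a self-map of the two-element state space $\{0,1\}$---but the two proofs organize this observation rather differently.

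The paper argues the first claim by comparing $T(x)_m$, $T(x)_{m+p}$, $T(x)_{m+2p}$ directly: if any two consecutive ones agree, induction on the recursion gives $p$-periodicity, and otherwise pigeonhole on two values forces $2p$-periodicity. For the second claim the paper first observes that a $2$ anywhere in the period pins $T(x)_j=0=T(x)_{j+p}$ at that position (hence $p$-periodic), and then, for periods with no $2$'s, uses a ``build-up'' trick: start from the all-zero word (where $T$ is trivially zero), insert the $1$'s of $x$ one at a time, and note that inserting a $1$ at position $\ell$ flips every $T(x)_i$ for $i\ge\ell$. Counting flips shows $T(x)_{m+p}=T(x)_m$ exactly when the number of $1$'s per period is even.

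Your approach instead names the state-transition maps $f_0=\mathrm{id}$, $f_1=\text{flip}$, $f_2=\text{const}_0$, composes them over one period to get $F_w$, and reads off both parts of the lemma from whether $F_w$ is constant, the identity, or the flip. This is cleaner and more uniform: the three cases fall out simultaneously with no separate ``flipping'' argument, and the verification in Case~3 that the two output blocks are bitwise complementary (hence genuinely distinct) is a nice touch that makes explicit what the paper leaves implicit. The trade-off is that the paper's proof is slightly more elementary, requiring no transducer language; yours buys conceptual clarity at essentially no cost.
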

\begin{proof} Fix $m$ sufficiently large for the word $x$ to be periodic after the first $m$ symbols, and consider the action of $T$ on the three quantities $x_m, x_{m+p}, x_{m +2p}$ (all three of which
are identical because of the eventual periodicity of $x$). If $T(x)_m = T(x)_{m+p}$, then we clearly have $T(x)_{m+1} = T(x)_{m+p+1}$ since $T(x)_{i}$ depends on only $x_i$ and $T(x)_{i-1}$. It follows by induction that $T(x)$ becomes periodic with period $p$.
Suppose now that $T(x)_m \neq T(x)_{m+p}$. Then, since there are only two symbols in $Y$, we have either $T(x)_{m+2p} = T(x)_m$ or $T(x)_{m+2p} = T(x)_{m+p}$. The second case is identical to the previously considered case and implies that $T(x)$ will be periodic with period $p$. If $T(x)_m = T(x)_{m + 2p},$ then we can infer that $T(x)$ is periodic with period $2p$.\\
Now we come to the second part of the statement. If $x_j = 2$ for some $j$ in the periodic section of $x$, then we must have $T(x)_j = 0$ (and therefore the $p-$periodicity of $x$ implies $T(x)_j = 0 = T(x)_{j+p}$), so we obtain that $T(x)$ is periodic with period $p$. We may thus limit ourselves to considering periodic words in $x$ containing only 0's and 1's in the periodic section. We observe for $l \geq m$ that flipping the value of $x_l$ (either from 0 to 1 or from 1 to 0) has the effect of flipping all values $T(x)_i$ for $i \geq l$. We can thus start with the basic word
$$0000\dots  \xrightarrow[]{T}  0000\dots$$
and add 1's one-by-one until we re-create the original string $x$: we place 1's in the proper position starting from the top of the range we are interested in and work our way down. If there is an even number of 1's in each period of $x$, then $T(x)_{m+p} = T(x)_{m}$, whereas an odd number of 1's in each period implies $T(x)_{m + 2p} = T(x)_m \neq T(x)_{m+p}$. 
\end{proof}

\textit{Remark.} We have shown only that $T(x)$ must be periodic with period either $p$ or $2p$.  The minimal period, however, can be any divisor of $p$ (in the first case) or any divisor of $2p$ but not of $p$ (in the second case).

\subsection{Existence and Doubling}

This section is devoted to an analysis of columns in the two-dimensional case. We consider an initial set $\left\{v_1, v_2, \dots, v_k\right\} \subset \mathbb{Z}_{\geq 0}^2$ with the property
that one of the vectors is $(0,1)$ and no other vector lies on the $y-$axis. In this setting, the columns naturally extend in the direction of the $y-$axis. We will say that the set has a column with period $p$ over $x \in \mathbb{N}$ if, for $y$ large enough,
$$(x,y)~\text{is in the Ulam set iff}~(x,y+p)~\mbox{is also in the set}.$$
We will usually talk about the period of a column and not necessarily the minimal period. Moreover, for fixed $x \in \mathbb{N}$, the case where there are no $(x,y)$ in the set for
$y$ beyond a certain threshold will also be denoted a column (the empty column).  We now show that columns extend all the way to infinity: for every fixed $x \in \mathbb{N}$, the behavior along the $y-$axis ultimately becomes periodic with a period that is a power of 2.

\begin{center}
\begin{figure}[h!]
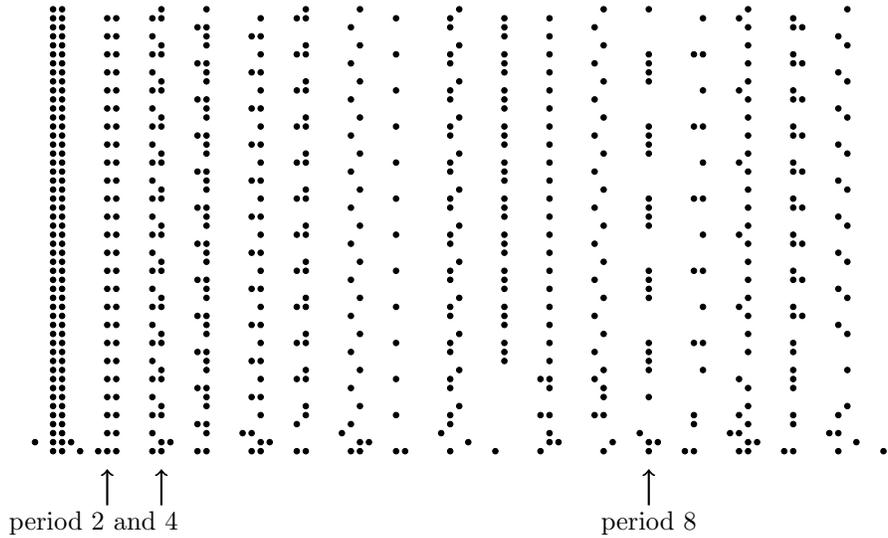


\captionsetup{width = 0.95\textwidth}
\caption{The set arising from $\left\{(2,0), (3,0), (0,1)\right\}$. This example shows columns of period 1, 2, 4 and 8, and we mark the first occurrences of periods 2,4, and 8.}
\label{fig:doubling}
\end{figure}
\end{center}
\vspace{-15pt}

\begin{theorem}[Periodicity in the $y-$direction] Let $\left\{v_1, v_2, \dots, v_k\right\} \subset \mathbb{Z}_{\geq 0}^2$ contain $(0,1)$ and no other vector on the $y-$axis. Then there exists a function $\phi:\mathbb{N} \rightarrow \mathbb{N}$ such that a nonempty column extends over $x$ if and only if there is an element $(x,y)$ in the set with $y \geq \phi(x)$. All columns (including empty columns) are eventually periodic and the period is a power of $2$. Moreover, the period is either the period of a preceding column or twice the period of a preceding column.
\end{theorem}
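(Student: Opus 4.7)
The proof is by strong induction on $x \ge 0$. The base case $x = 0$ is immediate: since $(0,1)$ is by hypothesis the only $y$-axis element, column $0$ consists solely of $(0,1)$ and is eventually periodic with period $1$. For the inductive step, fix $x \ge 1$ and suppose every column $x' < x$ is eventually periodic with period a power of $2$. Let $P$ be the lcm of these periods; since each is a power of $2$, so is $P$, and in fact $P$ equals the period of some preceding column (the one of maximal period). Write $c_y := \mathbf{1}[(x,y) \in U]$ and let $d_y$ count the unordered pairs $\{u,v\} \subset U$ with $u \ne v$, $u+v = (x,y)$, and neither summand on the $y$-axis. Because $(0,1)$ is the only $y$-axis element, the only representation of $(x,y)$ excluded from $d_y$ is $(0,1) + (x, y-1)$, and any other representation that uses an element of column $x$ would force the other summand onto the $y$-axis. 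Hence
\[
c_y = 1 \iff (c_{y-1}, d_y) \in \{(1,0),\,(0,1)\},
\]
which is precisely the rule defining the transformation $T$ of Lemma \ref{lemma:comb} on the input $\bigl(\min(d_y,2)\bigr)_y$.

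The crux is to show that $\bigl(\min(d_y,2)\bigr)_y$ is eventually periodic with period dividing $P$. For each split $x = x_1 + x_2$ with $1 \le x_1 \le x_2 \le x-1$, set
\[
D_{x_1,x_2}(y) := \#\bigl\{\{(x_1,y_1),(x_2,y-y_1)\} \subset U : (x_1,y_1) \ne (x_2,y-y_1)\bigr\},
\]
so that $d_y = \sum_{x_1+x_2 = x} D_{x_1,x_2}(y)$. By the inductive hypothesis, there exist thresholds $N_j \in \mathbb{N}$ and sets $S_j \subset \mathbb{Z}/P\mathbb{Z}$ such that $(x_j,y') \in U \iff y' \bmod P \in S_j$ for all $y' > N_j$. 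Partitioning the range of $y_1$ into a bulk interval $(N_{x_1}, y - N_{x_2})$ and two bounded boundary intervals, the bulk contribution to $D_{x_1,x_2}(y)$ takes the form
\[
\left\lfloor \tfrac{y - N_{x_1} - N_{x_2}}{P} \right\rfloor \cdot \bigl|\{s \in S_{x_1} : (y-s) \bmod P \in S_{x_2}\}\bigr| + O(1),
\]
which grows linearly in $y$ precisely when $y \bmod P$ lies in the sumset $S_{x_1} + S_{x_2} \pmod P$ and vanishes otherwise; the boundary contribution is $O(N_{x_1} + N_{x_2})$ and, for $y > N_{x_1} + N_{x_2}$, depends on $y$ only through $y \bmod P$. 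Summing over splits gives: within each residue class $r \bmod P$, either $d_y \ge 2$ for all sufficiently large $y \equiv r$ (some split contributes linearly), or $d_y$ stabilizes to a constant value depending only on $r$ (every split stays bounded). Thus $\min(d_y,2)$ is eventually periodic with period dividing $P$.

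Applying Lemma \ref{lemma:comb} with $p = P$ (a valid period of $\bigl(\min(d_y,2)\bigr)_y$), after reindexing to begin past the periodic threshold so that the base-case convention of $T$ aligns with $c_{y_0 - 1}$, the sequence $(c_y)_y$ is eventually periodic with period $P$ or $2P$. Since $P$ equals the period of some preceding column, the period of column $x$ is either the period of a preceding column or twice such a period; in either case it is a power of $2$, closing the induction. Taking $\phi(x)$ to be any integer past which column $x$ is purely periodic, the periodicity immediately yields that column $x$ is nonempty if and only if some $(x,y) \in U$ has $y \ge \phi(x)$. The principal obstacle is the bulk/boundary analysis above: one must verify that within each residue class $r \bmod P$ the count $d_y$ does not oscillate indefinitely between $0$, $1$, and $\ge 2$ but settles into one of the two regimes, which requires careful handling of the diagonal split $x_1 = x_2$ and of the bounded boundary contributions.
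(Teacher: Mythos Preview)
Your proposal is correct and follows essentially the same approach as the paper: induction on $x$, showing that the count of representations not using $(0,1)$ is eventually periodic with period $P$ equal to the lcm of the preceding column periods (which, all being powers of $2$, coincides with the maximal preceding period), and then invoking Lemma~\ref{lemma:comb} to absorb the $(0,1)$-contribution. Your bulk/boundary decomposition of $D_{x_1,x_2}(y)$ makes explicit what the paper's proof leaves somewhat informal—the paper simply asserts that a ``high $+$ high'' sum automatically acquires a second representation by periodicity and that the remaining ``high $+$ low'' sums inherit the period of the high column—so your write-up is, if anything, more careful on this point.
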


\begin{proof}
The proof proceeds by induction. 
The set clearly contains no points $(0,n)$ with $n \geq 2$, which means that an empty column extends over $x=0$ with period $2^0$.  We now assume that the statement
is true up to some $x-1$ and investigate the possible behavior of the set for lattice points with first coordinate fixed to be $x$.
We consider vectors of the form $(x,y)$ for $y$ much larger than any of the previously obtained bounds $\phi(0), \phi(1), \phi(2), \dots, \phi(x-1)$ and the $y-$values of any possible initial vectors with $x-$coordinate $x$.  We want to use these elements to show the existence of an infinitely periodic column over $x$. To that end, we first completely ignore the existence of the vector $(0,1)$ in the set and obtain a complete description without it;
we then add $(0,1)$ and explain its effect using Lemma \ref{lemma:comb}. We begin by arguing that any vector that is the sum of two elements from preceding columns with
$y-$coordinates significantly larger than the cutoff function $\phi$ has at least 2 representations. In this case, the periodicity of the preceding columns means that once there is a single representation, a second representation of the point can also be found easily.  (By taking points significantly larger than the previous $\phi -$bounds, we can work in the regime where at least one summand comes from well within the periodic region.)  At the same time, $2 \max_{0 \leq i \leq x-1} \phi (i)$ bounds the $y-$coordinate of any sum of two elements each with second coordinate smaller than $\phi$, then we can exclude this case by moving past that number.

\begin{center}
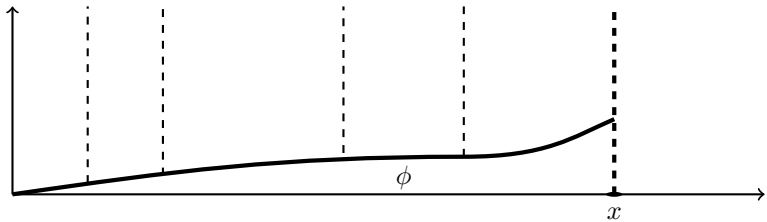
\begin{figure}[h!]
\begin{tikzpicture}[xscale=2,yscale=0.5]
\draw [thick,->] (0,0) -- (5,0);
\draw [thick,->] (0,0) -- (0,5);
\filldraw (4,0) circle (0.05cm);
\node at (4,-0.5) {$x$};
\node at (2.6,0.45) {$\phi$};
\draw [thick, dashed] (0.5,0.3) -- (0.5,5);
\draw [thick, dashed] (1,0.5) -- (1,5);
\draw [thick, dashed] (2.2,1.1) -- (2.2,5);
\draw [thick, dashed] (3,1) -- (3,5);
\draw[ultra thick] (0,0) to[out=30, in =180] (3,1)  to[out=0, in =240] (4,2);
\draw [dashed, ultra thick] (4,0) -- (4,5);
\end{tikzpicture}
\captionsetup{width = 0.95\textwidth}
\caption{A splitting into two regions.}
\end{figure}
\end{center}
\vspace{-20pt}
This implies that for $y$ sufficiently large, any hypothetical element $(x,y)$ can, if it exists, be uniquely written as 
$$ (x,y) = (x_1, y_1) + (x_2, y_2)$$
with $y_1 \geq \phi(x_1)$ and $y_2< \phi(x_2)$. This uniqueness, along with the periodicity of the column over $x_1$, then implies (denoting the period of the column over $x_1$ by $p$) that
$$ (x,y+p) = (x_1, y_1 + p) + (x_2, y_2) \qquad \mbox{uniquely.}$$
Likewise, the existence of a second representation for $(x, y+p)$ would automatically create a second representation for $(x,y)$, which is a contradiction. This implies that the `pre-correction-column' over $x$ has the
same period as that of a preceding column (which, by induction, is a power of 2). An application of Lemma \ref{lemma:comb} then accounts for the additional vector $(0,1)$ and shows that the
period may double, which would yield another power of 2.
\end{proof}


\textit{Remarks.} 
\begin{enumerate}

\item We note that in the case where doubling does not occur, the minimal period of the post-correction column may be a smaller power of 2 than the minimal period of the pre-correction column (due to Lemma \ref{lemma:comb}), but it still matches the minimal period of some previous column.

\item Another immediate consequence of this application of Lemma \ref{lemma:comb} is that the periodic portion of any column that doubles from period $2^n$ to period $2^{n+1}$ has the property that exactly one of the points $(x,y)$ and $(x, y+2^n)$ is included in the Ulam set (for sufficiently large $y$).

\item A careful inspection of the proof of Theorem 3 allows us to derive that $\phi(n) \leq c \cdot 3^n$ for some constant $c$ depending on the initial vectors. However, in practice $\phi$ seems to be much, much smaller, and we consider it an interesting problem to gain a better understanding of in which regions periodicity starts being enforced. (Numerically, it does seem that $\phi$ could very well be linear or
at most polynomial in most cases.)
\end{enumerate}

\subsection{Generalizations.} One notes that the above discussion applies equally well to columns extending in the direction of the $x-$axis arising due to the action of the initial vector $(1,0)$.  This structure result thus applies to sets of the form $\left\{(1,0), (0,1), v_3, \dots, v_k\right\} \subset \mathbb{Z}_{\geq 0}^2$
where all the vectors $v_3, \dots, v_k$ have both coordinates strictly positive.  Thus, there may be regions where all elements of the Ulam set are periodic in both the $x-$ and $y-$directions.  (See Figure \ref{fig:gencolumn}.)     
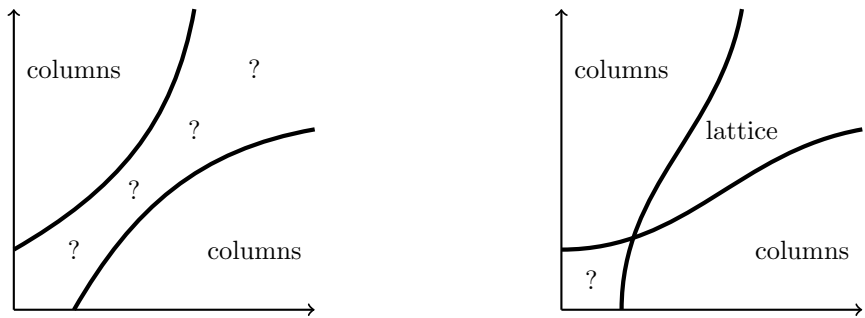
\begin{figure}[h!]
\begin{minipage}[l]{0.48\linewidth}
\centering
\begin{tikzpicture}[scale=0.8]
\draw [thick, ->] (0,0) -- (5,0);
\draw [thick, ->] (0,0) -- (0,5);
\draw[ultra thick] (0,1) to[out=30, in =260] (3,5);
\draw[ultra thick] (1,0) to[out=60, in =190] (5,3);
\node at (1,4) {columns};
\node at (4,1) {columns};
\node at (1,1) {?};
\node at (2,2) {?};
\node at (3,3) {?};
\node at (4,4) {?};
\end{tikzpicture}
\end{minipage}
\begin{minipage}[r]{0.48\linewidth}
\centering
\begin{tikzpicture}[scale=0.8]
\draw [thick, ->] (0,0) -- (5,0);
\draw [thick, ->] (0,0) -- (0,5);
\draw[ultra thick] (0,1) to[out=0, in =190] (5,3);
\draw[ultra thick] (1,0) to[out=90, in =260] (3,5);
\node at (1,4) {columns};
\node at (4,1) {columns};
\node at (0.5,0.5) {?};
\node at (3,3) {lattice};
\end{tikzpicture}
\end{minipage}
\captionsetup{width = 0.95\textwidth}
\caption{Regions bounded by $y=\phi(x)$ and $x=\phi^{}(y)$ and two possible types of behavior.  It is also conceivable for the curves to intersect multiple times (not shown here).}
\label{fig:gencolumn}
\end{figure}
\linebreak If $\{v_1, v_2, \dots, v_k\} \subset \mathbb{Z}_{\geq 0}^2$ contains $(0,a)$ (where $a$ is any positive integer) but no other vector on the $y-$axis, then the argument above still applies with the main difference being that 
columns are now periodic with periods $a\cdot 2^n$: if we split the $y-$coordinates with respect to their residue class modulo $a$, the proof above essentially applies verbatim.
 We also remark that the argument extends easily to extremal directions in high-dimensional cases, and we leave the details to the interested reader.

\section{Open Problems}

It is clear that Ulam sets are incredibly rich in structure and that we have barely managed to scrape the surface. Among the many natural questions, we explicitly point
out a few that seem particularly promising for future investigation.

\subsection{Higher-dimensional Examples.} We have been almost exclusively concerned with examples in $\mathbb{R}^2_{\geq 0}$ and the example
$$ \left\{(1,0,0), (0,1,0), (0,0,1) \right\} \subset \mathbb{R}^3_{\geq 0}.$$
We emphasize that this three-dimensional example provides the universal dynamics for `generic' sets of three elements in fairly general sets $\mathbb{A}$ (including
the generic case of Ulam-type sequences in $\mathbb{R}_{\geq 0}$ with an initial set of three elements that are linearly independent over $\mathbb{Z}$). This set seems to have an extraordinary amount
of structure and symmetry and should be of great interest.

\subsection{Lattice Structures.} One natural question is how unavoidable lattice structures are: if the initial set $\left\{v_1, \dots, v_k\right\} \subset \mathbb{Z}_{\geq 0}^2$ has
exactly one vector of the form $(x,0)$ and exactly one vector of the form $(0,y)$, must the resulting set ultimately exhibit lattice-type structure for sufficiently large $x$ and $y$? 
Conversely, does the existence of aperiodic behavior (for instance, the classical Ulam sequence) on one axis always preclude regular column structures in that direction?

\subsection{Classical Ulam Sequence and $(0,1)$.}  Clearly, one of the most striking examples is given by $\left\{(1,0), (2,0), (0,1)\right\}$. As already discussed, we recover the classical Ulam sequence on the $x-$axis, then we see fairly intricate behavior close to the $x-$axis and occasional nonempty vertical columns. We observe that in the first 100.000 elements of this sequence, no column seems to have period larger than 2. We also observe that all elements
in the sequence with $x-$coordinate fixed ($x \geq 2$) have their second coordinate either always even or always odd. If true, this would imply that all nonempty columns are of period 2. (See \S 4.2.)  We also observe that the
Fourier frequency phenomenon from \cite{stein} seems to appear if we look at points for which the $y-$coordinate is fixed. It would be quite fascinating if the dynamical behavior of this Ulam set could shed some light on the classical Ulam sequence in one dimension.

\subsection{Columns and Their Properties} Figure \ref{fig:doubling} shows the first few points in the evolution of $\left\{(2,0), (3,0), (0,1)\right\}$. We observe doubling of
the column period three times in the elements we have calculated so far. While we have shown that columns (empty or nonempty) eventually arise, we do not understand
columns very well: when are there infinitely many nonempty columns? Do they generically double their period (as observed in Figure \ref{fig:doubling}), or do they not (which seems to be
the case for the extended Ulam sequence)?

\subsection{Classification.} A complete classification of the behavior of $\left\{(1,0), (0,1), (m,n)\right\}$ for the case $m,n \in \mathbb{Z}_{\geq 1}$ seems within reach (and, using linear invariance,
naturally includes many other initial conditions as well).  (Note added in revision: this question has in the meanwhile been completely resolved by Hinman, Kuca, Schlesinger \& Sheydvasser \cite{hinman}.)  Here, we sketch the general types of structures that arise (excluding some `edge cases' where $(m,n)$ is small). If $(m,n)$ is included in the lattice generated by $(1,0)$ and $(0,1)$, then the Ulam sequence is degenerate: the new vector does not 
contribute anything and we obtain the lattice generated by $\{(1,0), (0,1) \}$ alone.  
\begin{enumerate}
\item If both coordinates of $(m,n)$ are even, we observe that the resulting set consists of a series of repeating, equally spaced $L-$shaped figures parallel to the coordinate axes.  More specifically, all elements of the $L$'s have both coordinates odd, and each $L$ consists of $m/2$ horizontal columns of period 2 and $n/2$ vertical columns of period 2, with all columns spaced 2 apart.  Interestingly, this means that many of the columns exhibit periodic behavior for the intervals before they begin exhibiting the pattern that continues to infinity. It also shows that the function $\phi$ (such that $y \geq \phi(x)$ forces periodicity) can
grow linearly.

\begin{center}
\begin{figure}[h!]
\begin{tikzpicture}[scale=0.11]

\filldraw (1,0) circle (0.3cm);
\filldraw (0,1) circle (0.3cm);
\filldraw (6,4) circle (0.3cm);
\filldraw (1,1) circle (0.3cm);
\filldraw (2,1) circle (0.3cm);
\filldraw (1,2) circle (0.3cm);
\filldraw (3,1) circle (0.3cm);
\filldraw (1,3) circle (0.3cm);
\filldraw (4,1) circle (0.3cm);
\filldraw (1,4) circle (0.3cm);
\filldraw (3,3) circle (0.3cm);
\filldraw (5,1) circle (0.3cm);
\filldraw (1,5) circle (0.3cm);
\filldraw (3,5) circle (0.3cm);
\filldraw (5,3) circle (0.3cm);
\filldraw (6,1) circle (0.3cm);
\filldraw (1,6) circle (0.3cm);
\filldraw (7,1) circle (0.3cm);
\filldraw (1,7) circle (0.3cm);
\filldraw (5,5) circle (0.3cm);
\filldraw (3,7) circle (0.3cm);
\filldraw (7,3) circle (0.3cm);
\filldraw (8,1) circle (0.3cm);
\filldraw (1,8) circle (0.3cm);
\filldraw (5,7) circle (0.3cm);
\filldraw (9,1) circle (0.3cm);
\filldraw (1,9) circle (0.3cm);
\filldraw (3,9) circle (0.3cm);
\filldraw (9,3) circle (0.3cm);
\filldraw (10,1) circle (0.3cm);
\filldraw (1,10) circle (0.3cm);
\filldraw (5,9) circle (0.3cm);
\filldraw (11,1) circle (0.3cm);
\filldraw (1,11) circle (0.3cm);
\filldraw (3,11) circle (0.3cm);
\filldraw (11,3) circle (0.3cm);
\filldraw (12,1) circle (0.3cm);
\filldraw (1,12) circle (0.3cm);
\filldraw (5,11) circle (0.3cm);
\filldraw (13,1) circle (0.3cm);
\filldraw (1,13) circle (0.3cm);
\filldraw (3,13) circle (0.3cm);
\filldraw (13,3) circle (0.3cm);
\filldraw (5,13) circle (0.3cm);
\filldraw (14,1) circle (0.3cm);
\filldraw (1,14) circle (0.3cm);
\filldraw (15,1) circle (0.3cm);
\filldraw (1,15) circle (0.3cm);
\filldraw (3,15) circle (0.3cm);
\filldraw (15,3) circle (0.3cm);
\filldraw (5,15) circle (0.3cm);
\filldraw (13,9) circle (0.3cm);
\filldraw (16,1) circle (0.3cm);
\filldraw (1,16) circle (0.3cm);
\filldraw (17,1) circle (0.3cm);
\filldraw (1,17) circle (0.3cm);
\filldraw (13,11) circle (0.3cm);
\filldraw (3,17) circle (0.3cm);
\filldraw (17,3) circle (0.3cm);
\filldraw (15,9) circle (0.3cm);
\filldraw (5,17) circle (0.3cm);
\filldraw (18,1) circle (0.3cm);
\filldraw (1,18) circle (0.3cm);
\filldraw (13,13) circle (0.3cm);
\filldraw (15,11) circle (0.3cm);
\filldraw (19,1) circle (0.3cm);
\filldraw (1,19) circle (0.3cm);
\filldraw (3,19) circle (0.3cm);
\filldraw (19,3) circle (0.3cm);
\filldraw (17,9) circle (0.3cm);
\filldraw (5,19) circle (0.3cm);
\filldraw (13,15) circle (0.3cm);
\filldraw (15,13) circle (0.3cm);
\filldraw (20,1) circle (0.3cm);
\filldraw (1,20) circle (0.3cm);
\filldraw (17,11) circle (0.3cm);
\filldraw (21,1) circle (0.3cm);
\filldraw (1,21) circle (0.3cm);
\filldraw (19,9) circle (0.3cm);
\filldraw (3,21) circle (0.3cm);
\filldraw (21,3) circle (0.3cm);
\filldraw (15,15) circle (0.3cm);
\filldraw (13,17) circle (0.3cm);
\filldraw (17,13) circle (0.3cm);
\filldraw (5,21) circle (0.3cm);
\filldraw (19,11) circle (0.3cm);
\filldraw (22,1) circle (0.3cm);
\filldraw (1,22) circle (0.3cm);
\filldraw (15,17) circle (0.3cm);
\filldraw (17,15) circle (0.3cm);
\filldraw (21,9) circle (0.3cm);
\filldraw (23,1) circle (0.3cm);
\filldraw (1,23) circle (0.3cm);
\filldraw (13,19) circle (0.3cm);
\filldraw (3,23) circle (0.3cm);
\filldraw (23,3) circle (0.3cm);
\filldraw (5,23) circle (0.3cm);
\filldraw (21,11) circle (0.3cm);
\filldraw (24,1) circle (0.3cm);
\filldraw (1,24) circle (0.3cm);
\filldraw (17,17) circle (0.3cm);
\filldraw (15,19) circle (0.3cm);
\filldraw (23,9) circle (0.3cm);
\filldraw (13,21) circle (0.3cm);
\filldraw (25,1) circle (0.3cm);
\filldraw (1,25) circle (0.3cm);
\filldraw (3,25) circle (0.3cm);
\filldraw (25,3) circle (0.3cm);
\filldraw (5,25) circle (0.3cm);
\filldraw (23,11) circle (0.3cm);
\filldraw (17,19) circle (0.3cm);
\filldraw (15,21) circle (0.3cm);
\filldraw (26,1) circle (0.3cm);
\filldraw (1,26) circle (0.3cm);
\filldraw (13,23) circle (0.3cm);
\filldraw (25,9) circle (0.3cm);
\filldraw (27,1) circle (0.3cm);
\filldraw (1,27) circle (0.3cm);
\filldraw (17,21) circle (0.3cm);
\filldraw (3,27) circle (0.3cm);
\filldraw (27,3) circle (0.3cm);
\filldraw (25,11) circle (0.3cm);
\filldraw (5,27) circle (0.3cm);
\filldraw (15,23) circle (0.3cm);
\filldraw (28,1) circle (0.3cm);
\filldraw (1,28) circle (0.3cm);
\filldraw (13,25) circle (0.3cm);
\filldraw (27,9) circle (0.3cm);
\filldraw (17,23) circle (0.3cm);
\filldraw (29,1) circle (0.3cm);
\filldraw (1,29) circle (0.3cm);
\filldraw (3,29) circle (0.3cm);
\filldraw (29,3) circle (0.3cm);
\filldraw (27,11) circle (0.3cm);
\filldraw (15,25) circle (0.3cm);
\filldraw (5,29) circle (0.3cm);
\filldraw (13,27) circle (0.3cm);
\filldraw (30,1) circle (0.3cm);
\filldraw (1,30) circle (0.3cm);
\filldraw (17,25) circle (0.3cm);
\filldraw (25,17) circle (0.3cm);
\filldraw (29,9) circle (0.3cm);
\filldraw (15,27) circle (0.3cm);
\filldraw (31,1) circle (0.3cm);
\filldraw (1,31) circle (0.3cm);
\filldraw (29,11) circle (0.3cm);
\filldraw (3,31) circle (0.3cm);
\filldraw (31,3) circle (0.3cm);
\filldraw (5,31) circle (0.3cm);
\filldraw (25,19) circle (0.3cm);
\filldraw (13,29) circle (0.3cm);
\filldraw (17,27) circle (0.3cm);
\filldraw (27,17) circle (0.3cm);
\filldraw (32,1) circle (0.3cm);
\filldraw (1,32) circle (0.3cm);
\filldraw (31,9) circle (0.3cm);
\filldraw (15,29) circle (0.3cm);
\filldraw (25,21) circle (0.3cm);
\filldraw (31,11) circle (0.3cm);
\filldraw (33,1) circle (0.3cm);
\filldraw (1,33) circle (0.3cm);
\filldraw (27,19) circle (0.3cm);
\filldraw (3,33) circle (0.3cm);
\filldraw (33,3) circle (0.3cm);
\filldraw (5,33) circle (0.3cm);
\filldraw (13,31) circle (0.3cm);
\filldraw (17,29) circle (0.3cm);
\filldraw (29,17) circle (0.3cm);
\filldraw (25,23) circle (0.3cm);
\filldraw (34,1) circle (0.3cm);
\filldraw (1,34) circle (0.3cm);
\filldraw (33,9) circle (0.3cm);
\filldraw (27,21) circle (0.3cm);
\filldraw (15,31) circle (0.3cm);
\filldraw (29,19) circle (0.3cm);
\filldraw (33,11) circle (0.3cm);
\filldraw (35,1) circle (0.3cm);
\filldraw (1,35) circle (0.3cm);
\filldraw (3,35) circle (0.3cm);
\filldraw (35,3) circle (0.3cm);
\filldraw (5,35) circle (0.3cm);
\filldraw (17,31) circle (0.3cm);
\filldraw (31,17) circle (0.3cm);
\filldraw (25,25) circle (0.3cm);
\filldraw (13,33) circle (0.3cm);
\filldraw (27,23) circle (0.3cm);
\filldraw (29,21) circle (0.3cm);
\filldraw (36,1) circle (0.3cm);
\filldraw (1,36) circle (0.3cm);
\filldraw (35,9) circle (0.3cm);
\filldraw (15,33) circle (0.3cm);
\filldraw (31,19) circle (0.3cm);
\filldraw (35,11) circle (0.3cm);
\filldraw (25,27) circle (0.3cm);
\filldraw (27,25) circle (0.3cm);
\filldraw (37,1) circle (0.3cm);
\filldraw (1,37) circle (0.3cm);
\filldraw (29,23) circle (0.3cm);
\filldraw (3,37) circle (0.3cm);
\filldraw (37,3) circle (0.3cm);
\filldraw (17,33) circle (0.3cm);
\filldraw (33,17) circle (0.3cm);
\filldraw (5,37) circle (0.3cm);
\filldraw (13,35) circle (0.3cm);
\filldraw (38,1) circle (0.3cm);
\filldraw (1,38) circle (0.3cm);
\filldraw (37,9) circle (0.3cm);
\filldraw (15,35) circle (0.3cm);
\filldraw (33,19) circle (0.3cm);
\filldraw (27,27) circle (0.3cm);
\filldraw (25,29) circle (0.3cm);
\filldraw (29,25) circle (0.3cm);
\filldraw (37,11) circle (0.3cm);
\filldraw (17,35) circle (0.3cm);
\filldraw (35,17) circle (0.3cm);
\filldraw (39,1) circle (0.3cm);
\filldraw (1,39) circle (0.3cm);
\filldraw (3,39) circle (0.3cm);
\filldraw (39,3) circle (0.3cm);
\filldraw (13,37) circle (0.3cm);
\filldraw (5,39) circle (0.3cm);
\filldraw (27,29) circle (0.3cm);
\filldraw (29,27) circle (0.3cm);
\filldraw (35,19) circle (0.3cm);
\filldraw (25,31) circle (0.3cm);
\filldraw (15,37) circle (0.3cm);
\filldraw (40,1) circle (0.3cm);
\filldraw (39,9) circle (0.3cm);
\filldraw (39,11) circle (0.3cm);
\filldraw (17,37) circle (0.3cm);
\filldraw (37,17) circle (0.3cm);
\filldraw (41,1) circle (0.3cm);
\filldraw (29,29) circle (0.3cm);
\filldraw (41,3) circle (0.3cm);
\filldraw (13,39) circle (0.3cm);
\filldraw (27,31) circle (0.3cm);
\filldraw (25,33) circle (0.3cm);
\filldraw (37,19) circle (0.3cm);
\filldraw (15,39) circle (0.3cm);
\filldraw (41,9) circle (0.3cm);
\filldraw (42,1) circle (0.3cm);
\filldraw (41,11) circle (0.3cm);
\filldraw (29,31) circle (0.3cm);
\filldraw (17,39) circle (0.3cm);
\filldraw (39,17) circle (0.3cm);
\filldraw (27,33) circle (0.3cm);
\filldraw (43,1) circle (0.3cm);
\filldraw (25,35) circle (0.3cm);
\filldraw (43,3) circle (0.3cm);
\filldraw (39,19) circle (0.3cm);
\filldraw (43,9) circle (0.3cm);
\filldraw (29,33) circle (0.3cm);
\filldraw (44,1) circle (0.3cm);
\filldraw (27,35) circle (0.3cm);
\filldraw (43,11) circle (0.3cm);
\filldraw (41,17) circle (0.3cm);
\filldraw (25,37) circle (0.3cm);
\filldraw (37,25) circle (0.3cm);
\filldraw (45,1) circle (0.3cm);
\filldraw (45,3) circle (0.3cm);
\filldraw (41,19) circle (0.3cm);
\filldraw (29,35) circle (0.3cm);
\filldraw (27,37) circle (0.3cm);
\filldraw (37,27) circle (0.3cm);
\filldraw (45,9) circle (0.3cm);
\filldraw (46,1) circle (0.3cm);
\filldraw (43,17) circle (0.3cm);
\filldraw (45,11) circle (0.3cm);
\filldraw (25,39) circle (0.3cm);
\filldraw (39,25) circle (0.3cm);
\filldraw (47,1) circle (0.3cm);
\filldraw (43,19) circle (0.3cm);
\filldraw (29,37) circle (0.3cm);
\filldraw (37,29) circle (0.3cm);
\filldraw (47,3) circle (0.3cm);
\filldraw (27,39) circle (0.3cm);
\filldraw (39,27) circle (0.3cm);
\filldraw (47,9) circle (0.3cm);
\filldraw (48,1) circle (0.3cm);
\filldraw (41,25) circle (0.3cm);
\filldraw (45,17) circle (0.3cm);
\filldraw (47,11) circle (0.3cm);
\filldraw (37,31) circle (0.3cm);
\filldraw (29,39) circle (0.3cm);
\filldraw (39,29) circle (0.3cm);
\filldraw (45,19) circle (0.3cm);
\filldraw (49,1) circle (0.3cm);
\filldraw (49,3) circle (0.3cm);
\filldraw (41,27) circle (0.3cm);
\filldraw (37,33) circle (0.3cm);
\filldraw (43,25) circle (0.3cm);
\filldraw (49,9) circle (0.3cm);
\filldraw (39,31) circle (0.3cm);
\filldraw (47,17) circle (0.3cm);
\filldraw (50,1) circle (0.3cm);
\filldraw (49,11) circle (0.3cm);
\filldraw (41,29) circle (0.3cm);
\filldraw (47,19) circle (0.3cm);
\filldraw (43,27) circle (0.3cm);
\filldraw (37,35) circle (0.3cm);
\filldraw (51,1) circle (0.3cm);
\filldraw (51,3) circle (0.3cm);
\filldraw (39,33) circle (0.3cm);
\filldraw (41,31) circle (0.3cm);
\filldraw (45,25) circle (0.3cm);
\filldraw (51,9) circle (0.3cm);
\filldraw (49,17) circle (0.3cm);
\filldraw (52,1) circle (0.3cm);
\filldraw (51,11) circle (0.3cm);
\filldraw (37,37) circle (0.3cm);
\filldraw (39,35) circle (0.3cm);
\filldraw (45,27) circle (0.3cm);
\filldraw (49,19) circle (0.3cm);
\filldraw (41,33) circle (0.3cm);
\filldraw (53,1) circle (0.3cm);
\filldraw (53,3) circle (0.3cm);
\filldraw (47,25) circle (0.3cm);
\filldraw (53,9) circle (0.3cm);
\filldraw (51,17) circle (0.3cm);
\filldraw (37,39) circle (0.3cm);
\filldraw (39,37) circle (0.3cm);
\filldraw (41,35) circle (0.3cm);
\filldraw (54,1) circle (0.3cm);
\filldraw (53,11) circle (0.3cm);
\filldraw (47,27) circle (0.3cm);
\filldraw (51,19) circle (0.3cm);
\filldraw (55,1) circle (0.3cm);
\filldraw (49,25) circle (0.3cm);
\filldraw (55,3) circle (0.3cm);
\filldraw (39,39) circle (0.3cm);
\filldraw (41,37) circle (0.3cm);
\filldraw (53,17) circle (0.3cm);
\filldraw (55,9) circle (0.3cm);
\filldraw (49,27) circle (0.3cm);
\filldraw (56,1) circle (0.3cm);
\filldraw (55,11) circle (0.3cm);
\filldraw (53,19) circle (0.3cm);
\filldraw (41,39) circle (0.3cm);
\filldraw (51,25) circle (0.3cm);
\filldraw (57,1) circle (0.3cm);
\filldraw (57,3) circle (0.3cm);
\filldraw (55,17) circle (0.3cm);
\filldraw (57,9) circle (0.3cm);
\filldraw (51,27) circle (0.3cm);
\filldraw (57,11) circle (0.3cm);
\filldraw (55,19) circle (0.3cm);
\filldraw (53,25) circle (0.3cm);
\filldraw (49,33) circle (0.3cm);
\filldraw (57,17) circle (0.3cm);
\filldraw (53,27) circle (0.3cm);
\filldraw (57,19) circle (0.3cm);
\filldraw (49,35) circle (0.3cm);
\filldraw (55,25) circle (0.3cm);
\filldraw (51,33) circle (0.3cm);
\filldraw (55,27) circle (0.3cm);
\filldraw (49,37) circle (0.3cm);
\filldraw (51,35) circle (0.3cm);
\filldraw (57,25) circle (0.3cm);
\filldraw (53,33) circle (0.3cm);
\filldraw (49,39) circle (0.3cm);
\filldraw (51,37) circle (0.3cm);
\filldraw (57,27) circle (0.3cm);
\filldraw (53,35) circle (0.3cm);
\filldraw (55,33) circle (0.3cm);
\filldraw (51,39) circle (0.3cm);
\filldraw (53,37) circle (0.3cm);
\filldraw (55,35) circle (0.3cm);
\filldraw (53,39) circle (0.3cm);
\filldraw (57,33) circle (0.3cm);
\filldraw (57,35) circle (0.3cm);
\end{tikzpicture}
\captionsetup{width = 0.95\textwidth}
\caption{The set arising from $\left\{(1,0), (0,1), (6,4)\right\}$ with $L-$shapes. Like in Figure 7, the interior region splits into two regions each containing a lattice.}
\label{fig:Lshape}
\end{figure}
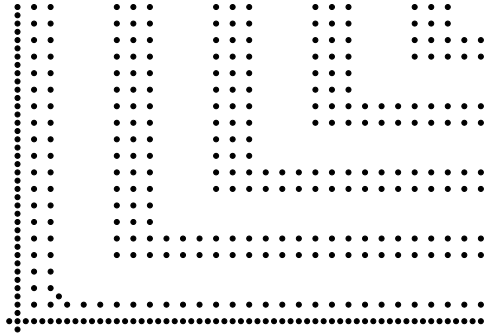
\end{center}
\vspace{-30pt}
\item If $m$ is even and $n>3$ is odd, then the set is the usual lattice coming from $\{(1,0), (0,1)\}$ for all $x \notin \left\{ m, m+1 \right\}$.  Clearly, at $x=m$, we have the extra point $(m,n)$, and the column (otherwise of period 2) at $x=m+1$ is truncated at $y = n$ because of the extra sums generated by the interaction of $(m,n)$ and the column over $x=1$.  Then, for larger values of $x$, the structure returns to the same lattice as before because for $(m,n)$ to interfere, it would have to be summed with a vector with odd $x-$coordinate and even $y-$coordinate, but these vectors only exist at $x=1$, which we have already discussed.
\item If $m$ is even and $n=3$, then the set is the usual lattice created by $\{(1,0), (0,1)\}$ set for all $x<m$.  There is no element $(x,y)$ at $x=m$ for $y>3$.  For $x>m$, the normal lattice is shifted to the right: all subsequent elements have even $x-$coordinate and odd $y-$coordinate.
\end{enumerate}

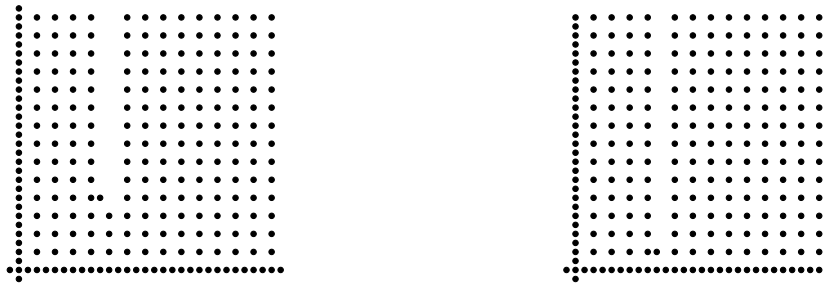
\begin{figure}[h!]
\begin{minipage}[b]{0.48\linewidth}
\centering
\begin{tikzpicture}[scale=0.12]
\filldraw (1,0) circle (0.3cm);
\filldraw (0,1) circle (0.3cm);
\filldraw (10,9) circle (0.3cm);
\filldraw (1,1) circle (0.3cm);
\filldraw (2,1) circle (0.3cm);
\filldraw (1,2) circle (0.3cm);
\filldraw (3,1) circle (0.3cm);
\filldraw (1,3) circle (0.3cm);
\filldraw (4,1) circle (0.3cm);
\filldraw (1,4) circle (0.3cm);
\filldraw (3,3) circle (0.3cm);
\filldraw (5,1) circle (0.3cm);
\filldraw (1,5) circle (0.3cm);
\filldraw (3,5) circle (0.3cm);
\filldraw (5,3) circle (0.3cm);
\filldraw (6,1) circle (0.3cm);
\filldraw (1,6) circle (0.3cm);
\filldraw (7,1) circle (0.3cm);
\filldraw (1,7) circle (0.3cm);
\filldraw (5,5) circle (0.3cm);
\filldraw (3,7) circle (0.3cm);
\filldraw (7,3) circle (0.3cm);
\filldraw (8,1) circle (0.3cm);
\filldraw (1,8) circle (0.3cm);
\filldraw (5,7) circle (0.3cm);
\filldraw (7,5) circle (0.3cm);
\filldraw (9,1) circle (0.3cm);
\filldraw (1,9) circle (0.3cm);
\filldraw (3,9) circle (0.3cm);
\filldraw (9,3) circle (0.3cm);
\filldraw (7,7) circle (0.3cm);
\filldraw (10,1) circle (0.3cm);
\filldraw (1,10) circle (0.3cm);
\filldraw (5,9) circle (0.3cm);
\filldraw (9,5) circle (0.3cm);
\filldraw (11,1) circle (0.3cm);
\filldraw (1,11) circle (0.3cm);
\filldraw (3,11) circle (0.3cm);
\filldraw (11,3) circle (0.3cm);
\filldraw (7,9) circle (0.3cm);
\filldraw (9,7) circle (0.3cm);
\filldraw (12,1) circle (0.3cm);
\filldraw (1,12) circle (0.3cm);
\filldraw (5,11) circle (0.3cm);
\filldraw (11,5) circle (0.3cm);
\filldraw (9,9) circle (0.3cm);
\filldraw (13,1) circle (0.3cm);
\filldraw (1,13) circle (0.3cm);
\filldraw (7,11) circle (0.3cm);
\filldraw (11,7) circle (0.3cm);
\filldraw (3,13) circle (0.3cm);
\filldraw (13,3) circle (0.3cm);
\filldraw (5,13) circle (0.3cm);
\filldraw (13,5) circle (0.3cm);
\filldraw (14,1) circle (0.3cm);
\filldraw (1,14) circle (0.3cm);
\filldraw (9,11) circle (0.3cm);
\filldraw (7,13) circle (0.3cm);
\filldraw (13,7) circle (0.3cm);
\filldraw (15,1) circle (0.3cm);
\filldraw (1,15) circle (0.3cm);
\filldraw (3,15) circle (0.3cm);
\filldraw (15,3) circle (0.3cm);
\filldraw (5,15) circle (0.3cm);
\filldraw (15,5) circle (0.3cm);
\filldraw (9,13) circle (0.3cm);
\filldraw (13,9) circle (0.3cm);
\filldraw (16,1) circle (0.3cm);
\filldraw (1,16) circle (0.3cm);
\filldraw (7,15) circle (0.3cm);
\filldraw (15,7) circle (0.3cm);
\filldraw (17,1) circle (0.3cm);
\filldraw (1,17) circle (0.3cm);
\filldraw (13,11) circle (0.3cm);
\filldraw (3,17) circle (0.3cm);
\filldraw (17,3) circle (0.3cm);
\filldraw (9,15) circle (0.3cm);
\filldraw (15,9) circle (0.3cm);
\filldraw (5,17) circle (0.3cm);
\filldraw (17,5) circle (0.3cm);
\filldraw (18,1) circle (0.3cm);
\filldraw (1,18) circle (0.3cm);
\filldraw (7,17) circle (0.3cm);
\filldraw (17,7) circle (0.3cm);
\filldraw (13,13) circle (0.3cm);
\filldraw (15,11) circle (0.3cm);
\filldraw (19,1) circle (0.3cm);
\filldraw (1,19) circle (0.3cm);
\filldraw (3,19) circle (0.3cm);
\filldraw (19,3) circle (0.3cm);
\filldraw (9,17) circle (0.3cm);
\filldraw (17,9) circle (0.3cm);
\filldraw (5,19) circle (0.3cm);
\filldraw (19,5) circle (0.3cm);
\filldraw (13,15) circle (0.3cm);
\filldraw (15,13) circle (0.3cm);
\filldraw (20,1) circle (0.3cm);
\filldraw (1,20) circle (0.3cm);
\filldraw (7,19) circle (0.3cm);
\filldraw (19,7) circle (0.3cm);
\filldraw (17,11) circle (0.3cm);
\filldraw (21,1) circle (0.3cm);
\filldraw (1,21) circle (0.3cm);
\filldraw (9,19) circle (0.3cm);
\filldraw (19,9) circle (0.3cm);
\filldraw (3,21) circle (0.3cm);
\filldraw (21,3) circle (0.3cm);
\filldraw (15,15) circle (0.3cm);
\filldraw (13,17) circle (0.3cm);
\filldraw (17,13) circle (0.3cm);
\filldraw (5,21) circle (0.3cm);
\filldraw (21,5) circle (0.3cm);
\filldraw (19,11) circle (0.3cm);
\filldraw (22,1) circle (0.3cm);
\filldraw (1,22) circle (0.3cm);
\filldraw (7,21) circle (0.3cm);
\filldraw (21,7) circle (0.3cm);
\filldraw (15,17) circle (0.3cm);
\filldraw (17,15) circle (0.3cm);
\filldraw (9,21) circle (0.3cm);
\filldraw (21,9) circle (0.3cm);
\filldraw (23,1) circle (0.3cm);
\filldraw (1,23) circle (0.3cm);
\filldraw (13,19) circle (0.3cm);
\filldraw (19,13) circle (0.3cm);
\filldraw (3,23) circle (0.3cm);
\filldraw (23,3) circle (0.3cm);
\filldraw (5,23) circle (0.3cm);
\filldraw (23,5) circle (0.3cm);
\filldraw (21,11) circle (0.3cm);
\filldraw (24,1) circle (0.3cm);
\filldraw (1,24) circle (0.3cm);
\filldraw (7,23) circle (0.3cm);
\filldraw (23,7) circle (0.3cm);
\filldraw (17,17) circle (0.3cm);
\filldraw (15,19) circle (0.3cm);
\filldraw (19,15) circle (0.3cm);
\filldraw (9,23) circle (0.3cm);
\filldraw (23,9) circle (0.3cm);
\filldraw (13,21) circle (0.3cm);
\filldraw (21,13) circle (0.3cm);
\filldraw (25,1) circle (0.3cm);
\filldraw (1,25) circle (0.3cm);
\filldraw (3,25) circle (0.3cm);
\filldraw (25,3) circle (0.3cm);
\filldraw (5,25) circle (0.3cm);
\filldraw (25,5) circle (0.3cm);
\filldraw (23,11) circle (0.3cm);
\filldraw (17,19) circle (0.3cm);
\filldraw (19,17) circle (0.3cm);
\filldraw (15,21) circle (0.3cm);
\filldraw (21,15) circle (0.3cm);
\filldraw (7,25) circle (0.3cm);
\filldraw (25,7) circle (0.3cm);
\filldraw (26,1) circle (0.3cm);
\filldraw (1,26) circle (0.3cm);
\filldraw (13,23) circle (0.3cm);
\filldraw (23,13) circle (0.3cm);
\filldraw (9,25) circle (0.3cm);
\filldraw (25,9) circle (0.3cm);
\filldraw (19,19) circle (0.3cm);
\filldraw (27,1) circle (0.3cm);
\filldraw (1,27) circle (0.3cm);
\filldraw (17,21) circle (0.3cm);
\filldraw (21,17) circle (0.3cm);
\filldraw (3,27) circle (0.3cm);
\filldraw (27,3) circle (0.3cm);
\filldraw (25,11) circle (0.3cm);
\filldraw (5,27) circle (0.3cm);
\filldraw (27,5) circle (0.3cm);
\filldraw (15,23) circle (0.3cm);
\filldraw (23,15) circle (0.3cm);
\filldraw (7,27) circle (0.3cm);
\filldraw (27,7) circle (0.3cm);
\filldraw (28,1) circle (0.3cm);
\filldraw (1,28) circle (0.3cm);
\filldraw (13,25) circle (0.3cm);
\filldraw (25,13) circle (0.3cm);
\filldraw (19,21) circle (0.3cm);
\filldraw (21,19) circle (0.3cm);
\filldraw (9,27) circle (0.3cm);
\filldraw (27,9) circle (0.3cm);
\filldraw (17,23) circle (0.3cm);
\filldraw (23,17) circle (0.3cm);
\filldraw (29,1) circle (0.3cm);
\filldraw (1,29) circle (0.3cm);
\filldraw (3,29) circle (0.3cm);
\filldraw (29,3) circle (0.3cm);
\filldraw (27,11) circle (0.3cm);
\filldraw (15,25) circle (0.3cm);
\filldraw (25,15) circle (0.3cm);
\filldraw (5,29) circle (0.3cm);
\filldraw (29,5) circle (0.3cm);
\filldraw (21,21) circle (0.3cm);
\filldraw (7,29) circle (0.3cm);
\filldraw (29,7) circle (0.3cm);
\filldraw (19,23) circle (0.3cm);
\filldraw (23,19) circle (0.3cm);
\filldraw (13,27) circle (0.3cm);
\filldraw (27,13) circle (0.3cm);
\filldraw (30,1) circle (0.3cm);
\filldraw (1,30) circle (0.3cm);
\filldraw (17,25) circle (0.3cm);
\filldraw (25,17) circle (0.3cm);
\filldraw (9,29) circle (0.3cm);
\filldraw (29,9) circle (0.3cm);
\filldraw (15,27) circle (0.3cm);
\filldraw (27,15) circle (0.3cm);
\filldraw (29,11) circle (0.3cm);
\filldraw (21,23) circle (0.3cm);
\filldraw (23,21) circle (0.3cm);
\filldraw (19,25) circle (0.3cm);
\filldraw (25,19) circle (0.3cm);
\filldraw (13,29) circle (0.3cm);
\filldraw (29,13) circle (0.3cm);
\filldraw (17,27) circle (0.3cm);
\filldraw (27,17) circle (0.3cm);
\filldraw (23,23) circle (0.3cm);
\filldraw (15,29) circle (0.3cm);
\filldraw (29,15) circle (0.3cm);
\filldraw (21,25) circle (0.3cm);
\filldraw (25,21) circle (0.3cm);
\filldraw (19,27) circle (0.3cm);
\filldraw (27,19) circle (0.3cm);
\filldraw (17,29) circle (0.3cm);
\filldraw (29,17) circle (0.3cm);
\filldraw (23,25) circle (0.3cm);
\filldraw (25,23) circle (0.3cm);
\filldraw (21,27) circle (0.3cm);
\filldraw (27,21) circle (0.3cm);
\filldraw (19,29) circle (0.3cm);
\filldraw (29,19) circle (0.3cm);
\filldraw (25,25) circle (0.3cm);
\filldraw (23,27) circle (0.3cm);
\filldraw (27,23) circle (0.3cm);
\filldraw (21,29) circle (0.3cm);
\filldraw (29,21) circle (0.3cm);
\filldraw (25,27) circle (0.3cm);
\filldraw (27,25) circle (0.3cm);
\filldraw (23,29) circle (0.3cm);
\filldraw (29,23) circle (0.3cm);
\filldraw (27,27) circle (0.3cm);
\filldraw (25,29) circle (0.3cm);
\filldraw (29,25) circle (0.3cm);
\filldraw (27,29) circle (0.3cm);
\filldraw (29,27) circle (0.3cm);
\filldraw (29,29) circle (0.3cm);
\end{tikzpicture}
\end{minipage}
\begin{minipage}[b]{0.48\linewidth}
\centering
\begin{tikzpicture}[scale=0.12]

\filldraw (1,0) circle (0.3cm);
\filldraw (0,1) circle (0.3cm);
\filldraw (10,3) circle (0.3cm);
\filldraw (1,1) circle (0.3cm);
\filldraw (2,1) circle (0.3cm);
\filldraw (1,2) circle (0.3cm);
\filldraw (3,1) circle (0.3cm);
\filldraw (1,3) circle (0.3cm);
\filldraw (4,1) circle (0.3cm);
\filldraw (1,4) circle (0.3cm);
\filldraw (3,3) circle (0.3cm);
\filldraw (5,1) circle (0.3cm);
\filldraw (1,5) circle (0.3cm);
\filldraw (3,5) circle (0.3cm);
\filldraw (5,3) circle (0.3cm);
\filldraw (6,1) circle (0.3cm);
\filldraw (1,6) circle (0.3cm);
\filldraw (7,1) circle (0.3cm);
\filldraw (1,7) circle (0.3cm);
\filldraw (5,5) circle (0.3cm);
\filldraw (3,7) circle (0.3cm);
\filldraw (7,3) circle (0.3cm);
\filldraw (8,1) circle (0.3cm);
\filldraw (1,8) circle (0.3cm);
\filldraw (5,7) circle (0.3cm);
\filldraw (7,5) circle (0.3cm);
\filldraw (9,1) circle (0.3cm);
\filldraw (1,9) circle (0.3cm);
\filldraw (3,9) circle (0.3cm);
\filldraw (9,3) circle (0.3cm);
\filldraw (7,7) circle (0.3cm);
\filldraw (10,1) circle (0.3cm);
\filldraw (1,10) circle (0.3cm);
\filldraw (5,9) circle (0.3cm);
\filldraw (9,5) circle (0.3cm);
\filldraw (11,1) circle (0.3cm);
\filldraw (1,11) circle (0.3cm);
\filldraw (3,11) circle (0.3cm);
\filldraw (7,9) circle (0.3cm);
\filldraw (9,7) circle (0.3cm);
\filldraw (12,1) circle (0.3cm);
\filldraw (1,12) circle (0.3cm);
\filldraw (5,11) circle (0.3cm);
\filldraw (12,3) circle (0.3cm);
\filldraw (9,9) circle (0.3cm);
\filldraw (12,5) circle (0.3cm);
\filldraw (13,1) circle (0.3cm);
\filldraw (1,13) circle (0.3cm);
\filldraw (7,11) circle (0.3cm);
\filldraw (3,13) circle (0.3cm);
\filldraw (12,7) circle (0.3cm);
\filldraw (5,13) circle (0.3cm);
\filldraw (14,1) circle (0.3cm);
\filldraw (1,14) circle (0.3cm);
\filldraw (9,11) circle (0.3cm);
\filldraw (14,3) circle (0.3cm);
\filldraw (7,13) circle (0.3cm);
\filldraw (14,5) circle (0.3cm);
\filldraw (12,9) circle (0.3cm);
\filldraw (15,1) circle (0.3cm);
\filldraw (1,15) circle (0.3cm);
\filldraw (3,15) circle (0.3cm);
\filldraw (14,7) circle (0.3cm);
\filldraw (5,15) circle (0.3cm);
\filldraw (9,13) circle (0.3cm);
\filldraw (16,1) circle (0.3cm);
\filldraw (1,16) circle (0.3cm);
\filldraw (16,3) circle (0.3cm);
\filldraw (12,11) circle (0.3cm);
\filldraw (7,15) circle (0.3cm);
\filldraw (14,9) circle (0.3cm);
\filldraw (16,5) circle (0.3cm);
\filldraw (17,1) circle (0.3cm);
\filldraw (1,17) circle (0.3cm);
\filldraw (3,17) circle (0.3cm);
\filldraw (16,7) circle (0.3cm);
\filldraw (9,15) circle (0.3cm);
\filldraw (12,13) circle (0.3cm);
\filldraw (5,17) circle (0.3cm);
\filldraw (14,11) circle (0.3cm);
\filldraw (18,1) circle (0.3cm);
\filldraw (1,18) circle (0.3cm);
\filldraw (18,3) circle (0.3cm);
\filldraw (16,9) circle (0.3cm);
\filldraw (7,17) circle (0.3cm);
\filldraw (18,5) circle (0.3cm);
\filldraw (19,1) circle (0.3cm);
\filldraw (1,19) circle (0.3cm);
\filldraw (14,13) circle (0.3cm);
\filldraw (12,15) circle (0.3cm);
\filldraw (3,19) circle (0.3cm);
\filldraw (9,17) circle (0.3cm);
\filldraw (18,7) circle (0.3cm);
\filldraw (16,11) circle (0.3cm);
\filldraw (5,19) circle (0.3cm);
\filldraw (20,1) circle (0.3cm);
\filldraw (1,20) circle (0.3cm);
\filldraw (18,9) circle (0.3cm);
\filldraw (20,3) circle (0.3cm);
\filldraw (7,19) circle (0.3cm);
\filldraw (14,15) circle (0.3cm);
\filldraw (20,5) circle (0.3cm);
\filldraw (16,13) circle (0.3cm);
\filldraw (12,17) circle (0.3cm);
\filldraw (21,1) circle (0.3cm);
\filldraw (1,21) circle (0.3cm);
\filldraw (9,19) circle (0.3cm);
\filldraw (18,11) circle (0.3cm);
\filldraw (20,7) circle (0.3cm);
\filldraw (3,21) circle (0.3cm);
\filldraw (5,21) circle (0.3cm);
\filldraw (20,9) circle (0.3cm);
\filldraw (16,15) circle (0.3cm);
\filldraw (22,1) circle (0.3cm);
\filldraw (1,22) circle (0.3cm);
\filldraw (14,17) circle (0.3cm);
\filldraw (7,21) circle (0.3cm);
\filldraw (22,3) circle (0.3cm);
\filldraw (18,13) circle (0.3cm);
\filldraw (12,19) circle (0.3cm);
\filldraw (22,5) circle (0.3cm);
\filldraw (20,11) circle (0.3cm);
\filldraw (9,21) circle (0.3cm);
\filldraw (23,1) circle (0.3cm);
\filldraw (1,23) circle (0.3cm);
\filldraw (22,7) circle (0.3cm);
\filldraw (3,23) circle (0.3cm);
\filldraw (16,17) circle (0.3cm);
\filldraw (18,15) circle (0.3cm);
\filldraw (5,23) circle (0.3cm);
\filldraw (14,19) circle (0.3cm);
\filldraw (22,9) circle (0.3cm);
\filldraw (20,13) circle (0.3cm);
\filldraw (24,1) circle (0.3cm);
\filldraw (1,24) circle (0.3cm);
\filldraw (7,23) circle (0.3cm);
\filldraw (24,3) circle (0.3cm);
\filldraw (12,21) circle (0.3cm);
\filldraw (24,5) circle (0.3cm);
\filldraw (22,11) circle (0.3cm);
\filldraw (9,23) circle (0.3cm);
\filldraw (18,17) circle (0.3cm);
\filldraw (16,19) circle (0.3cm);
\filldraw (24,7) circle (0.3cm);
\filldraw (20,15) circle (0.3cm);
\filldraw (25,1) circle (0.3cm);
\filldraw (1,25) circle (0.3cm);
\filldraw (3,25) circle (0.3cm);
\filldraw (14,21) circle (0.3cm);
\filldraw (5,25) circle (0.3cm);
\filldraw (22,13) circle (0.3cm);
\filldraw (24,9) circle (0.3cm);
\filldraw (12,23) circle (0.3cm);
\filldraw (7,25) circle (0.3cm);
\filldraw (26,1) circle (0.3cm);
\filldraw (1,26) circle (0.3cm);
\filldraw (26,3) circle (0.3cm);
\filldraw (18,19) circle (0.3cm);
\filldraw (20,17) circle (0.3cm);
\filldraw (24,11) circle (0.3cm);
\filldraw (16,21) circle (0.3cm);
\filldraw (26,5) circle (0.3cm);
\filldraw (9,25) circle (0.3cm);
\filldraw (22,15) circle (0.3cm);
\filldraw (26,7) circle (0.3cm);
\filldraw (14,23) circle (0.3cm);
\filldraw (27,1) circle (0.3cm);
\filldraw (1,27) circle (0.3cm);
\filldraw (3,27) circle (0.3cm);
\filldraw (24,13) circle (0.3cm);
\filldraw (5,27) circle (0.3cm);
\filldraw (26,9) circle (0.3cm);
\filldraw (20,19) circle (0.3cm);
\filldraw (18,21) circle (0.3cm);
\filldraw (12,25) circle (0.3cm);
\filldraw (22,17) circle (0.3cm);
\filldraw (7,27) circle (0.3cm);
\filldraw (28,1) circle (0.3cm);
\filldraw (1,28) circle (0.3cm);
\filldraw (16,23) circle (0.3cm);
\filldraw (28,3) circle (0.3cm);
\filldraw (26,11) circle (0.3cm);
\filldraw (24,15) circle (0.3cm);
\filldraw (28,5) circle (0.3cm);
\filldraw (9,27) circle (0.3cm);
\filldraw (14,25) circle (0.3cm);
\filldraw (28,7) circle (0.3cm);
\filldraw (20,21) circle (0.3cm);
\filldraw (1,29) circle (0.3cm);
\filldraw (26,13) circle (0.3cm);
\filldraw (22,19) circle (0.3cm);
\filldraw (3,29) circle (0.3cm);
\filldraw (18,23) circle (0.3cm);
\filldraw (28,9) circle (0.3cm);
\filldraw (24,17) circle (0.3cm);
\filldraw (5,29) circle (0.3cm);
\filldraw (12,27) circle (0.3cm);
\filldraw (16,25) circle (0.3cm);
\filldraw (7,29) circle (0.3cm);
\filldraw (26,15) circle (0.3cm);
\filldraw (28,11) circle (0.3cm);
\filldraw (9,29) circle (0.3cm);
\filldraw (14,27) circle (0.3cm);
\filldraw (22,21) circle (0.3cm);
\filldraw (20,23) circle (0.3cm);
\filldraw (24,19) circle (0.3cm);
\filldraw (18,25) circle (0.3cm);
\filldraw (28,13) circle (0.3cm);
\filldraw (26,17) circle (0.3cm);
\filldraw (12,29) circle (0.3cm);
\filldraw (16,27) circle (0.3cm);
\filldraw (28,15) circle (0.3cm);
\filldraw (22,23) circle (0.3cm);
\filldraw (24,21) circle (0.3cm);
\filldraw (20,25) circle (0.3cm);
\filldraw (14,29) circle (0.3cm);
\filldraw (26,19) circle (0.3cm);
\filldraw (18,27) circle (0.3cm);
\filldraw (28,17) circle (0.3cm);
\filldraw (16,29) circle (0.3cm);
\filldraw (24,23) circle (0.3cm);
\filldraw (22,25) circle (0.3cm);
\filldraw (26,21) circle (0.3cm);
\filldraw (20,27) circle (0.3cm);
\filldraw (28,19) circle (0.3cm);
\filldraw (18,29) circle (0.3cm);
\filldraw (24,25) circle (0.3cm);
\filldraw (26,23) circle (0.3cm);
\filldraw (22,27) circle (0.3cm);
\filldraw (28,21) circle (0.3cm);
\filldraw (20,29) circle (0.3cm);
\filldraw (26,25) circle (0.3cm);
\filldraw (24,27) circle (0.3cm);
\filldraw (28,23) circle (0.3cm);
\filldraw (22,29) circle (0.3cm);
\filldraw (26,27) circle (0.3cm);
\filldraw (28,25) circle (0.3cm);
\filldraw (24,29) circle (0.3cm);
\filldraw (28,27) circle (0.3cm);
\filldraw (26,29) circle (0.3cm);
\filldraw (28,29) circle (0.3cm);
\end{tikzpicture}
\end{minipage}
\captionsetup{width = 0.95\textwidth}
\caption{The graphs arising from $\left\{(1,0), (0,1), (10,9)\right\}$, with the lattice temporarily disrupted but ultimately unchanged lattice (left), and $\left\{(1,0), (0,1), (10,3)\right\}$, with the lattice shifted (right).}
\label{fig:block}
\end{figure}
\vspace{-13pt}

\subsection{More Initial Conditions.} Our active investigation was mainly restricted to initial sets containing three vectors in $\mathbb{Z}_{\geq 0}^2$. The investigation of larger sets of initial
vectors seems like a daunting but also very promising avenue for further research.

\subsection{Ulam's Variant.} We recall that Ulam originally proposed another variant on the hexagonal lattice based on the idea of `including a new point if it forms with two previously
defined points the third vertex of a triangle, but only doing it in the case where it is uniquely so related to a previous pair'. We have not investigated this more geometrical variant.

\subsection{Other Variants.} It is clear that Ulam sets can be considered on other algebraic structures equipped with a binary operation and some notion of `size' (though,
as observed above in the case of $\mathbb{R}^{n}_{\geq 0}$, the dependence on the notion of size is rather weak, and the sets are fairly universal).\\

\textbf{Multiplication Over the Complex Numbers.}
One natural example that
comes to mind is $\mathbb{C}$ equipped with multiplication for elements with norm larger than $1$. The introduction of polar coordinates shows that
$$ \left( r_1 \measuredangle \phi_1\right) \left( r_2 \measuredangle \phi_2\right) =   r_1r_2 \measuredangle (\phi_1 + \phi_2),$$
which allows us to reduce to transform the problem to $\mathbb{R}_{>1} \times \mathbb{T}$. Moreover, as above, we can replace multiplication by addition via the logarithm. For an initial set $\left\{v_1, \dots, v_k \right\} \subset \mathbb{R}_{>0} \times \mathbb{T}$, we can use as our notion of length $f(x,y)=x$.\\

\textbf{Ulam Sets in $\mathbb{Z}_{\geq 0} \times \mathbb{Z}_n$.} One particularly natural setting, inspired by multiplication in $\mathbb{C}$, is that of $\mathbb{Z}_{\geq 0} \times \mathbb{Z}_n$. 
\begin{center}
\begin{figure}[h!]
\begin{tikzpicture}[scale=0.15]
\filldraw (1,3) circle (0.25cm);
\filldraw (3,4) circle (0.25cm);
\filldraw (4,1) circle (0.25cm);
\filldraw (5,4) circle (0.25cm);
\filldraw (6,1) circle (0.25cm);
\filldraw (7,4) circle (0.25cm);
\filldraw (7,5) circle (0.25cm);
\filldraw (8,1) circle (0.25cm);
\filldraw (9,4) circle (0.25cm);
\filldraw (10,1) circle (0.25cm);
\filldraw (10,3) circle (0.25cm);
\filldraw (11,4) circle (0.25cm);
\filldraw (12,1) circle (0.25cm);
\filldraw (12,3) circle (0.25cm);
\filldraw (13,4) circle (0.25cm);
\filldraw (13,1) circle (0.25cm);
\filldraw (14,1) circle (0.25cm);
\filldraw (14,3) circle (0.25cm);
\filldraw (15,4) circle (0.25cm);
\filldraw (16,1) circle (0.25cm);
\filldraw (16,5) circle (0.25cm);
\filldraw (16,3) circle (0.25cm);
\filldraw (17,4) circle (0.25cm);
\filldraw (18,1) circle (0.25cm);
\filldraw (18,5) circle (0.25cm);
\filldraw (18,3) circle (0.25cm);
\filldraw (19,4) circle (0.25cm);
\filldraw (19,3) circle (0.25cm);
\filldraw (20,1) circle (0.25cm);
\filldraw (20,5) circle (0.25cm);
\filldraw (20,3) circle (0.25cm);
\filldraw (21,4) circle (0.25cm);
\filldraw (22,3) circle (0.25cm);
\filldraw (22,5) circle (0.25cm);
\filldraw (24,1) circle (0.25cm);
\filldraw (24,3) circle (0.25cm);
\filldraw (24,5) circle (0.25cm);
\filldraw (26,1) circle (0.25cm);
\filldraw (26,3) circle (0.25cm);
\filldraw (26,5) circle (0.25cm);
\filldraw (28,3) circle (0.25cm);
\filldraw (28,1) circle (0.25cm);
\filldraw (28,5) circle (0.25cm);
\filldraw (30,1) circle (0.25cm);
\filldraw (30,5) circle (0.25cm);
\filldraw (32,1) circle (0.25cm);
\filldraw (32,5) circle (0.25cm);
\filldraw (34,5) circle (0.25cm);
\filldraw (34,1) circle (0.25cm);
\filldraw (36,1) circle (0.25cm);
\filldraw (38,1) circle (0.25cm);
\filldraw (40,1) circle (0.25cm);
\filldraw (43,4) circle (0.25cm);
\filldraw (44,1) circle (0.25cm);
\filldraw (47,4) circle (0.25cm);
\filldraw (48,1) circle (0.25cm);
\filldraw (49,1) circle (0.25cm);
\filldraw (50,3) circle (0.25cm);
\filldraw (51,4) circle (0.25cm);
\filldraw (52,1) circle (0.25cm);
\filldraw (52,5) circle (0.25cm);
\filldraw (54,5) circle (0.25cm);
\filldraw (54,3) circle (0.25cm);
\filldraw (55,4) circle (0.25cm);
\filldraw (56,1) circle (0.25cm);
\filldraw (58,3) circle (0.25cm);
\filldraw (58,5) circle (0.25cm);
\filldraw (62,3) circle (0.25cm);
\filldraw (62,5) circle (0.25cm);
\filldraw (62,1) circle (0.25cm);
\filldraw (66,5) circle (0.25cm);
\filldraw (66,1) circle (0.25cm);
\filldraw (70,1) circle (0.25cm);
\filldraw (70,5) circle (0.25cm);
\filldraw (74,1) circle (0.25cm);
\filldraw (81,4) circle (0.25cm);
\filldraw (82,1) circle (0.25cm);
\filldraw (83,4) circle (0.25cm);
\filldraw (84,1) circle (0.25cm);
\filldraw (88,3) circle (0.25cm);
\filldraw (89,4) circle (0.25cm);
\filldraw (90,1) circle (0.25cm);
\filldraw (90,3) circle (0.25cm);
\filldraw (91,4) circle (0.25cm);
\filldraw (91,3) circle (0.25cm);
\filldraw (92,1) circle (0.25cm);
\end{tikzpicture}
\captionsetup{width = 0.95\textwidth}
\caption{The set arising from $\left\{(1,3), (3,4)\right\} \subset \mathbb{Z}_{\geq 0} \times \mathbb{Z}_6$. We observe empirically that the $y-$coordinates 0 and 2 seem not to appear. }
\label{fig:twodlattice}
\end{figure}
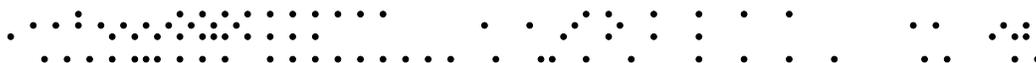
\end{center}
\vspace{-10pt}
 For the Ulam set to be well-defined, it is important that all initial elements $(x,y)$ have $x>0$.  Note also that not all Ulam sets defined this way contain an infinite number of elements (for example, initial vectors given by $\{(1,0), (1,1), (1,2)\} \subset \mathbb{Z}_{\geq 0} \times \mathbb{Z}_3$), and deriving a condition to determine which initial sets exhibit this property is an interesting question for future investigation.
\begin{center}
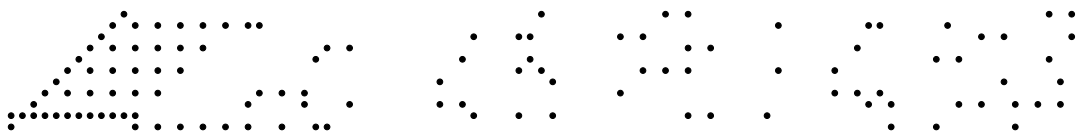
\begin{figure}[h!]
\begin{tikzpicture}[scale=0.15]
\filldraw (1,0) circle (0.25cm);
\filldraw (1,1) circle (0.25cm);
\filldraw (2,1) circle (0.25cm);
\filldraw (3,1) circle (0.25cm);
\filldraw (3,2) circle (0.25cm);
\filldraw (4,1) circle (0.25cm);
\filldraw (4,3) circle (0.25cm);
\filldraw (5,1) circle (0.25cm);
\filldraw (5,4) circle (0.25cm);
\filldraw (6,1) circle (0.25cm);
\filldraw (6,5) circle (0.25cm);
\filldraw (6,3) circle (0.25cm);
\filldraw (7,1) circle (0.25cm);
\filldraw (7,6) circle (0.25cm);
\filldraw (8,1) circle (0.25cm);
\filldraw (8,7) circle (0.25cm);
\filldraw (8,5) circle (0.25cm);
\filldraw (8,3) circle (0.25cm);
\filldraw (9,1) circle (0.25cm);
\filldraw (9,8) circle (0.25cm);
\filldraw (10,1) circle (0.25cm);
\filldraw (10,9) circle (0.25cm);
\filldraw (10,7) circle (0.25cm);
\filldraw (10,3) circle (0.25cm);
\filldraw (10,5) circle (0.25cm);
\filldraw (11,1) circle (0.25cm);
\filldraw (11,10) circle (0.25cm);
\filldraw (12,1) circle (0.25cm);
\filldraw (12,0) circle (0.25cm);
\filldraw (12,9) circle (0.25cm);
\filldraw (12,3) circle (0.25cm);
\filldraw (12,7) circle (0.25cm);
\filldraw (12,5) circle (0.25cm);
\filldraw (14,0) circle (0.25cm);
\filldraw (14,3) circle (0.25cm);
\filldraw (14,9) circle (0.25cm);
\filldraw (14,5) circle (0.25cm);
\filldraw (14,7) circle (0.25cm);
\filldraw (16,0) circle (0.25cm);
\filldraw (16,5) circle (0.25cm);
\filldraw (16,9) circle (0.25cm);
\filldraw (16,7) circle (0.25cm);
\filldraw (18,0) circle (0.25cm);
\filldraw (18,7) circle (0.25cm);
\filldraw (18,9) circle (0.25cm);
\filldraw (20,0) circle (0.25cm);
\filldraw (20,9) circle (0.25cm);
\filldraw (22,2) circle (0.25cm);
\filldraw (22,9) circle (0.25cm);
\filldraw (22,0) circle (0.25cm);
\filldraw (23,9) circle (0.25cm);
\filldraw (23,3) circle (0.25cm);
\filldraw (25,3) circle (0.25cm);
\filldraw (25,0) circle (0.25cm);
\filldraw (27,3) circle (0.25cm);
\filldraw (27,2) circle (0.25cm);
\filldraw (28,6) circle (0.25cm);
\filldraw (28,0) circle (0.25cm);
\filldraw (29,0) circle (0.25cm);
\filldraw (29,7) circle (0.25cm);
\filldraw (31,7) circle (0.25cm);
\filldraw (31,2) circle (0.25cm);
\filldraw (39,4) circle (0.25cm);
\filldraw (39,2) circle (0.25cm);
\filldraw (41,2) circle (0.25cm);
\filldraw (41,6) circle (0.25cm);
\filldraw (42,8) circle (0.25cm);
\filldraw (42,1) circle (0.25cm);
\filldraw (46,5) circle (0.25cm);
\filldraw (46,8) circle (0.25cm);
\filldraw (46,1) circle (0.25cm);
\filldraw (47,8) circle (0.25cm);
\filldraw (47,6) circle (0.25cm);
\filldraw (48,5) circle (0.25cm);
\filldraw (48,10) circle (0.25cm);
\filldraw (49,4) circle (0.25cm);
\filldraw (49,1) circle (0.25cm);
\filldraw (55,3) circle (0.25cm);
\filldraw (55,8) circle (0.25cm);
\filldraw (57,5) circle (0.25cm);
\filldraw (57,8) circle (0.25cm);
\filldraw (59,5) circle (0.25cm);
\filldraw (59,10) circle (0.25cm);
\filldraw (61,5) circle (0.25cm);
\filldraw (61,1) circle (0.25cm);
\filldraw (61,10) circle (0.25cm);
\filldraw (61,7) circle (0.25cm);
\filldraw (63,1) circle (0.25cm);
\filldraw (63,7) circle (0.25cm);
\filldraw (68,1) circle (0.25cm);
\filldraw (69,9) circle (0.25cm);
\filldraw (69,5) circle (0.25cm);
\filldraw (74,3) circle (0.25cm);
\filldraw (74,5) circle (0.25cm);
\filldraw (76,7) circle (0.25cm);
\filldraw (76,3) circle (0.25cm);
\filldraw (77,2) circle (0.25cm);
\filldraw (77,9) circle (0.25cm);
\filldraw (78,9) circle (0.25cm);
\filldraw (78,3) circle (0.25cm);
\filldraw (79,2) circle (0.25cm);
\filldraw (79,0) circle (0.25cm);
\filldraw (83,6) circle (0.25cm);
\filldraw (83,0) circle (0.25cm);
\filldraw (84,9) circle (0.25cm);
\filldraw (85,6) circle (0.25cm);
\filldraw (85,2) circle (0.25cm);
\filldraw (87,8) circle (0.25cm);
\filldraw (87,2) circle (0.25cm);
\filldraw (89,4) circle (0.25cm);
\filldraw (89,8) circle (0.25cm);
\filldraw (90,2) circle (0.25cm);
\filldraw (90,0) circle (0.25cm);
\filldraw (92,2) circle (0.25cm);
\filldraw (93,10) circle (0.25cm);
\filldraw (93,6) circle (0.25cm);
\filldraw (94,4) circle (0.25cm);
\filldraw (94,2) circle (0.25cm);
\filldraw (95,10) circle (0.25cm);
\filldraw (95,8) circle (0.25cm);
\filldraw (96,4) circle (0.25cm);
\end{tikzpicture}
\captionsetup{width = 0.95\textwidth}
\caption{The set arising from $\left\{(1,0), (1,1)\right\} \subset \mathbb{Z}_{\geq 0} \times \mathbb{Z}_{11}$.}
\label{fig:twodlattice}
\end{figure}
\end{center}
\vspace{-20pt}

\textbf{Acknowledgments.}
The authors wish to thank Noah Boorstin for assisting in the creation of computer models and Milo Brandt for pointing out the finite-preimage requirement for $f$-functions.

\end{document}